\documentclass[submit,hidelinks,onefignum,onetabnum]{siamart251216}

\usepackage{lipsum}
\usepackage{amsfonts}
\usepackage{graphicx}
\usepackage{epstopdf}
\usepackage{algorithmic}
\usepackage{algorithm}
\usepackage{multirow}
\usepackage{booktabs}
\ifpdf
  \DeclareGraphicsExtensions{.eps,.pdf,.png,.jpg}
\else
  \DeclareGraphicsExtensions{.eps}
\fi

\usepackage{xr-hyper}

\makeatletter
\@mparswitchfalse        
\makeatother
\normalmarginpar         

\theoremstyle{plain}
\newsiamremark{remark}{Remark}
\newsiamremark{hypothesis}{Hypothesis}
\crefname{hypothesis}{Hypothesis}{Hypotheses}
\newsiamthm{claim}{Claim}
\newsiamremark{fact}{Fact}
\crefname{fact}{Fact}{Facts}

\newtheorem{pr}[theorem]{Problem}
\numberwithin{equation}{section}

\newtheorem{as}[theorem]{Assumption}

\DeclareMathOperator*{\arginf}{arg\,inf}
\DeclareMathOperator*{\argmax}{arg\,max}
\DeclareMathOperator*{\esssup}{ess\,sup}
\DeclareMathOperator*{\essinf}{ess\,inf}
\DeclareMathOperator*{\Lip}{Lip}

\def\P{\mathbb{P}}

\def\E{\mathbb{E}}
\def\H{\mathrm{H}}

\def\calT{\mathcal{T}}

\def\N{\overline{N}}

\def\J{\mathcal{J}}

\def\K{\overline{K}}

\def\R{\mathbb{R}}

\def\Var{\mathrm{Var}}

\DeclareMathOperator*{\size}{size}
\DeclareMathOperator*{\Growth}{Gr}
\DeclareMathOperator*{\Holdertwo}{\mathrm{Hol}_2}

\DeclareMathOperator*{\LipH}{LipH}

\allowdisplaybreaks

\makeatletter
\DeclareRobustCommand*\cal{\@fontswitch\relax\mathcal}
\makeatother

\usepackage{amssymb}

\headers{Dual optimal switching and DeepMartingale}{J. Ye and H.\;Y.\;Wong}

\title{
Duality and DeepMartingale for High-Dimensional Optimal Switching: Computable Upper Bounds and Approximation-Expressivity Guarantees
\thanks{Submitted to the editors \today.
\funding{H.\;Y.\;Wong acknowledges the support from the Research Grants Council of Hong Kong (grant DOI:  GRF14308422).}}}

\author{Junyan Ye\thanks{Department of Statistics and Data Science, The Chinese University of Hong Kong, Shatin, N.T., Hong Kong 
  (\email{junyanye@link.cuhk.edu.hk}, \email{hywong@cuhk.edu.hk}).}
\and Hoi Ying Wong\footnotemark[2]
  }

\usepackage{amsopn}
\DeclareMathOperator{\diag}{diag}

\ifpdf
\hypersetup{
  pdftitle={Dual optimal switching and DeepMartingale},
  pdfauthor={J. Ye and H. Y. Wong}
}
\fi

\begin{document}

\maketitle

\begin{abstract}
We study finite-horizon optimal switching with discrete intervention dates on a general filtration, allowing continuous-time observations between decision dates, and develop a deep-learning-based dual framework with computable upper bounds. We first derive a dual representation for multiple switching by introducing a family of martingale penalties. The minimal penalty is characterized by the Doob martingales of the continuation values, which yields a fully computable upper bound. We then extend DeepMartingale from optimal stopping to optimal switching and establish convergence under both the upper-bound loss and an $L^2$-surrogate loss. We also provide an expressivity analysis: under the stated structural assumptions, for any target accuracy $\varepsilon>0$, there exist neural networks of size at most $c d^{q}\varepsilon^{-r}$ whose induced dual upper bound approximates the true value within $\varepsilon$, where $c$, $q$, and $r$ are independent of $d$ and $\varepsilon$. Hence,
the dual solver avoids the curse of dimensionality under the stated structural assumptions. For numerical assessment, we additionally implement a deep policy-based approach to produce feasible lower bounds and empirical upper--lower gaps. Numerical experiments on Brownian and Brownian--Poisson models demonstrate small upper--lower gaps and favorable performance in high dimensions. The learned dual martingale also yields a practical delta-hedging strategy.
\end{abstract}


\begin{MSCcodes}
93E20, 68T07, 65Y20, 60G40
\end{MSCcodes}

\section{Introduction}\label{sec:intro}

Optimal switching concerns sequential regime changes under uncertainty when each switch incurs a cost. In the finite-horizon Markovian continuous-intervention setting, it is classically linked to coupled obstacle/QVI/PIDE systems. We study a discrete-intervention formulation on a general filtration with continuous-time observations between intervention dates. This covers both intrinsically discrete-time models and discretely exercisable continuous-time models; in the Markovian case, it can also be viewed as a grid-restricted approximation of the continuous problem, equivalently as a time-discrete obstacle recursion. The problem is computationally difficult in high dimension because, at each decision date, one must optimize jointly over intervention and post-intervention regime, while continuation values remain coupled across regimes. Applications include natural-resource management \cite{Bernan-Schwartz-85}, firm entry--exit \cite{Dixit-89}, energy and electricity problems such as tolling, storage, and scheduling \cite{Carmona-Ludkoviski01122008,Carmona-Ludkovski-10,Bayraktar23-deep-switching}.

Optimal switching has been studied via PDE/ODE methods \cite{Oksendal-Brekke-94,Duckworth-zervos-00,zervos-98,Pham-switch-07} and BSDE methods \cite{Hamadene-07,Hamadene-Djehiche-09}. Explicit solutions are unavailable except in special cases, so numerical methods are essential. Grid-based QVI/PIDE solvers are effective only in low dimension. Regression-based dynamic programming \cite{ludkovski2005-thesis,Carmona-Ludkoviski01122008,Carmona-Ludkovski-10,Pham-sifin-14} alleviates but does not remove the curse of dimensionality, since the approximation space still grows rapidly with dimension. More recent deep-learning methods for high-dimensional PDEs and BSDEs \cite{han-weinanE18,RAISSI2019-pinns,pham2020deepBSDE,Becker-Jentzen-Neufeld-Deep-Splitting-21,pham2022deepBSDE_erroranalysis}, including switching with jumps via reflected BSDEJs \cite{Bayraktar23-deep-switching}, scale better empirically. Most such deep solvers, however, are value-based: they approximate value/continuation functions and recover the switching rule by comparison, but typically do not provide computable genuine upper bounds or switching-specific high-dimensional approximation guarantees.

A complementary direction is policy-based learning, where the control is parameterized directly. For optimal stopping this was introduced in \cite{Becker19} and extended to multiple stopping \cite{HAN2023106881} and impulse control \cite{Jia-Wong01022024}. 
Primal methods naturally yield feasible controls and hence lower bounds, but computable upper bounds are usually unavailable. 
In the present paper, however, our primary goal is not to develop a new primal learning theory, but rather to obtain computable genuine upper bounds for high-dimensional optimal switching.

For optimal stopping, martingale duality provides a natural route to genuine upper bounds \cite{Roger02,Haugh04,belome09,schoen13}. Building on this literature and recent neural approximation results \cite{Jentzen20,Jentzen23,gonon23}, Ye and Wong \cite{ye2025deepmartingale} introduced \emph{DeepMartingale} for discrete stopping with continuous-time observations, together with rigorous approximation guarantees. A main goal of the present paper is to extend this dual viewpoint from stopping to optimal switching on a general filtration. This is nontrivial: the controller must choose both intervention times and post-intervention regimes, and the continuation values are coupled across regimes, so classical stopping duality does not directly yield a computable dual formulation for switching. 
Moreover, the approximation-expressivity analysis framework can not be directly applied by \cite{ye2025deepmartingale} due to the existence of continuous-time integral of running payoff, as well as the maximum operator of multiple switching-regimes.

To our knowledge, a fully computable martingale-dual theory for finite-horizon optimal switching is still missing. The closest related work is Lin and Ludkovski \cite{lin2009dual}, where the upper bound still depends on the unknown value function and is therefore not fully computable. 
We therefore develop a deep-learning-based dual method for high-dimensional switching that produces computable genuine upper bounds.
For numerical benchmarking, we additionally establish primal dynamic programming principle and implement a deep policy-based approach to compute feasible lower bounds and report empirical upper--lower gaps. 
The learned dual martingale also admits a natural hedging interpretation.
From the viewpoint of scientific computing, the central challenge is to combine scalability, computable dual upper bounds, and high-dimensional approximation theory within one framework, 
while assessing the resulting dual solver against feasible lower bounds.

\medskip
\noindent\textbf{Our main contributions are:}
\begin{itemize}
    \item[(i)] We derive a martingale-dual representation for finite-horizon optimal switching with discrete intervention dates. Via an equivalent regime-decision reformulation, we prove strong duality and obtain fully computable genuine upper bounds.

    \item[(ii)] We extend \emph{DeepMartingale} \cite{ye2025deepmartingale} from stopping to switching and analyze the resulting solver. We prove convergence under both the upper-bound loss and an \(L^2\)-surrogate loss, and establish approximation/expressivity results that, under the stated structural assumptions, avoid the curse of dimensionality. We also instantiate the theory for affine It\^o diffusions.

    \item[(iii)] We implement the dual solver in Brownian and Brownian--Poisson settings. For numerical benchmarking, we additionally compute feasible lower bounds through primal dynamic programming principle and a deep policy-based approach, which allows us to report empirical upper--lower gaps in practice.
\end{itemize}


The paper is organized as follows. Section~\ref{sec:pb_formulate} formulates the switching problem and its regime-decision reformulation. Section~\ref{sec:duality_equiv} develops the martingale-duality theory and proves the dual dynamic programming principle and strong duality. Section~\ref{sec:DeepSwitchMart} develops the DeepMartingale dual solver, together with its convergence, expressivity, and delta-hedging interpretation in the Brownian Markovian setting. Section~\ref{sec:numerical} reports the numerical experiments.


\subsection{Notations}\label{sec:notations}

Fix \(T>0\) and a filtered probability space \((\Omega,\mathcal F,\mathbb F,\mathbb P)\), where \(\mathbb F=(\mathcal F_t)_{t\in[0,T]}\). For \(t\in[0,T]\), write \(\mathbb F_t:=(\mathcal F_s)_{s\in[t,T]}\). Fix \(N\in\mathbb N_+\) and the uniform grid
\(
\pi: t_n={nT}/{N},\;  n\in \overline N:=\{0,\ldots,N\}.
\)
Set
\(
\N^{-1}:=\overline N\setminus\{N\},\;
\overline N_n:=\{n,\ldots,N\},\;
\N_n^{-1}:=\overline N_n\setminus\{N\},
\)
and define the discrete filtrations
\(
\mathbb F^N:=(\mathcal F_{t_n})_{n\in\overline N},
\)
\(
\mathbb F_n:=(\mathcal F_{t_m})_{m\in\overline N_n}.
\)
We write \(\E[\cdot]\) for expectation and \(\E_t[\cdot]:=\E[\cdot\mid\mathcal F_t]\) for conditional expectation. For vectors, \(\|\cdot\|\) denotes the Euclidean norm; for matrices, \(\|\cdot\|_{\mathrm H}\) denotes the Hilbert--Schmidt norm. We also use convention \(\inf\varnothing:=+\infty\) and \(\sum_{k\in\varnothing}:=0\).

For \(p\ge 1\), \(k\in\mathbb N_+\), and \(0\le s\le t\le T\), let
\[
L^p(\mathcal F_t;\R^k)
:=
\{\xi:\xi \text{ is } \R^k\text{-valued } \mathcal F_t\text{-measurable and } \|\xi\|^p_{L^p}:= \E[\|\xi\|^p]<\infty\}.
\]
Let \(L_N^p(\R^k)\) and \(L_{n,N}^p(\R^k)\) denote the spaces of \(\R^k\)-valued, \(\mathbb F^N\)-adapted and \(\mathbb F_n\)-adapted discrete-time processes with \(L^p(\mathcal F_t;\R^k)\)-integrable components. Likewise, \(\mathbb L^p(\R^k)\) and \(\mathbb L_{s,t}^p(\R^k)\) denote the spaces of \(\R^k\)-valued, \(\mathbb F\)-adapted processes such that
\[
\|Z\|_{\mathbb L^p}^p:=\E\int_0^T \|Z_u\|^p\,du<\infty,
\qquad
\|Z\|_{\mathbb L^p_{s,t}}^p:=\E\int_s^t \|Z_u\|^p\,du<\infty.
\]

If \(\rho\) is a finite Borel measure on \(\R^{k_1}\), define
\[
L^p_{k_1,k_2}(\rho)
:=
\Bigl\{F:\R^{k_1}\to\R^{k_2}:
\|F\|_{p,\rho}^p:=\int_{\R^{k_1}}\|F(x)\|^p\,\rho(dx)<\infty
\Bigr\},
\]
and
\(
\mathbb M_p(\rho):=
\bigl(\int_{\R^{k_1}}\|x\|^p\,\rho(dx)\bigr)^{1/p}.
\)

Let \(J\in\mathbb N_+\) and \(\mathcal J:=\{1,\ldots,J\}\), with \(\mathcal J^{-i}:=\mathcal J\setminus\{i\}\). For \(n\in\N^{-1}\), let
\[
\mathcal J_n:=\{(j_m)_{m=n}^N:\ j_m\in\mathcal J,\ j_N=j_{N-1}\}, \quad \mathcal J_N:=\{(i)\}
\]
and, for \(i\in\mathcal J\), \( n\in \N \), 
\(
\mathcal J_n^i:=\{(j_m)_{m=n-1}^N:\ j_{n-1}=i,\ (j_m)_{m=n}^N\in\mathcal J_n\} .
\)
Let \(\mathcal T^N\) be the set of \(\mathbb F^N\)-stopping times taking values in \(\overline N\), and \(\mathcal T_n\) the set of \(\mathbb F_n\)-stopping times taking values in \(\overline N_n\). Finally, \(\mathcal M_N\) and \(\mathcal M_{n,N}\) denote the sets of $L^1_N(\R^1)$ and $ L^1_{n,N}(\R^1) $ discrete-time martingales on \(\mathbb F^N\) and \(\mathbb F_n\), respectively.

\section{Problem formulation and reformulation}\label{sec:pb_formulate}

We are given running payoffs \((f^i)_{i\in\mathcal J}\), with \(f^i=(f^i(t))_{t\in[0,T]}\in\mathbb L^1(\mathbb R)\); terminal payoffs \((\Phi^i)_{i\in\mathcal J}\), with \(\Phi^i\in L^1(\mathcal F_T;\mathbb R)\); and \(\mathbb F\)-adapted switching costs \((l_{ij})_{i,j\in\mathcal J}\) satisfying for all $t\in [0,T]$, 

{(i)}. integrability $ l_{ij}(t) \in L^1(\mathcal{F}_t ; \R^1) $, $ i,j\in \J $;

{(ii)}. strict triangular condition
\(
l_{ii}(t)\equiv 0,\;
l_{ij}(t)+l_{jk}(t)>l_{ik}(t),\  i\neq j,\  j\neq k,\  \mathbb P\text{-a.s.}
\)

This rules out cost-improving instantaneous consecutive switches and makes the problem well posed.

\subsection{Original optimal switching problem}

For \(n\in\overline N\) and \(i\in\mathcal J\), an admissible switching control is a sequence
\(
\alpha=(\sigma_r,\kappa_r)_{r\ge0}\in\mathcal A_n^i
\)
such that \(\sigma_0=n\); \((\sigma_r)_{r\ge0} \in \calT_n \) ; \(\mathbb P(\sigma_r<N, 
\sigma_r=\sigma_{r+1})=0\) for all \(r\ge0\); \(\kappa_0=i\); and each \(\kappa_r\in\mathcal F_{t_{\sigma_r}}\) takes values in \(\mathcal J\), with
\(
\kappa_{r+1}\neq \kappa_r \; \text{on } \{\sigma_{r+1}<N\}.
\)
Define the number of effective switches by
\(
N(\alpha):=\sum_{r\ge1}1_{\{\sigma_r<N\}}.
\)
Since \(\sigma_r=N,\; \P\)-a.s.
for all sufficiently large \(r\), the reward
\begin{equation}\label{eq:payoff-functional}
J_n^i(\alpha)
:=\mathbb E_{t_n}\!\Big[
\sum_{r\ge0}\Big(
\int_{t_{\sigma_r}}^{t_{\sigma_{r+1}}} f^{\kappa_r}(s)\,ds
-l_{\kappa_r\kappa_{r+1}}(t_{\sigma_{r+1}})1_{\{\sigma_{r+1}<N\}}
\Big)
+\Phi^{\kappa_{N(\alpha)}}
\Big]
\end{equation}
is well defined. The corresponding value process (Snell Envelope) is
\begin{equation}\label{eq:original-primal}
\overline Y_{t_n}^i
:=\operatorname*{ess\,sup}_{\alpha\in\mathcal A_n^i}J_n^i(\alpha),
\quad i\in\mathcal J,\ \ n\in\overline N.
\tag{P0}
\end{equation}

\begin{remark}[Connection to QVIs]
In the Markovian case, \(\overline Y_{t_n}^i=\bar v_i^\pi(t_n,X_{t_n})\). If the regime-\(i\) dynamics have generator \(\mathcal A^i\) (possibly with a nonlocal jump term), and
\(
\mathcal R_i[w](t,x):=\max_{j\neq i}\{w_j(t,x)-l_{ij}(t,x)\},
\)
then the continuous-time values are characterized in viscosity sense by the coupled QVI / integro-QVI system: for $ i\in \J $, 
\[
\min\bigl\{
-\partial_t v_i(t,x)-\mathcal A^i v_i(t,x)-f^i(t,x),\
v_i(t,x)-\mathcal R_i[v](t,x)
\bigr\}=0,
\quad
v_i(T,x)=\Phi^i(x).
\]
On the grid \(\pi\), the discrete values satisfy
\[
\bar v_i^\pi(T,x)=\Phi^i(x),\quad
\bar v_i^\pi(t_n,x)
=
\max\big\{
\mathcal O_n^i[\bar v_i^\pi(t_{n+1},\cdot)](x),\
\mathcal R_i[\bar v^\pi](t_n,x)
\big\},
\]
where
\(
\mathcal O_n^i[\psi](x)
:=
\mathbb E\!\big[
\int_{t_n}^{t_{n+1}} f^i(s,X_s^{t_n,x,i})\,ds
+\psi(X_{t_{n+1}}^{t_n,x,i})
\big].
\)
Thus the present problem is the grid-restricted counterpart of the classical QVI/PIDE. We do not pursue mesh-refinement here; the dual constructions below do not rely on PDE representation.
\end{remark}

\subsection{Equivalent reformulation}\label{sec:primal_equiv}

Since \(l_{ii}\equiv0\), it is convenient to transform a control by the regime-decision on each interval \([t_m,t_{m+1})\). 

\vspace{0.3em}
\paragraph{Regime-decision Reformulation}
For \(n\in\overline N^{-1} \) and \(i\in\mathcal J\), let \(\mathcal D_n^i\) be the set of \(\mathcal J\)-valued sequences \(d=(d_m)_{m=n}^N\) such that \(d_m\) is \(\mathcal F_{t_m}\)-measurable for each \(m\), and \(d_N=d_{N-1}\); set \(\mathcal D_N^i:=\{(i)\}\). For \(d\in\mathcal D_n^i\), define
\[
L_n^i(d)
:=
\mathbb E_{t_n}\!\Big[
\sum_{m=n}^{N-1}\Big(
\int_{t_m}^{t_{m+1}} f^{d_m}(s)\,ds
-l_{d_m d_{m+1}}(t_{m+1})1_{\{m+1<N\}}
\Big)
-l_{i d_n}(t_n)
+\Phi^{d_N}
\Big].
\]

\begin{theorem}[Equivalence with regime-decision]\label{thm:switching-equiv-mode-decide-primal}
For any \(i\in\mathcal J\) and \(n\in\overline N\),
\begin{equation}\label{eq:original-primal-mode-choose}
\overline Y_{t_n}^i
=
\esssup_{j\in\mathcal D_n^i}L_n^i(j),
\quad \mathbb P\text{-a.s.} 
\tag{\textbf{P}}
\end{equation}
\end{theorem}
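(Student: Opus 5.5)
We prove the two inequalities in \eqref{eq:original-primal-mode-choose} with explicit pathwise maps between switching controls and regime-decision sequences that do not decrease the conditional reward. The case \(n=N\) is trivial: \(\mathcal A_N^i\) forces \(\sigma_r=N\) and \(\kappa_{N(\alpha)}=\kappa_0=i\), while \(\mathcal D_N^i=\{(i)\}\). In both cases the value is \(\E_{t_N}[\Phi^i]=\Phi^i\), since no cost term is active. So fix \(n\le N-1\).

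\emph{Step 1 (\(\le\)).} Given \(\alpha=(\sigma_r,\kappa_r)\in\mathcal A_n^i\), define the regime in force on \([t_m,t_{m+1})\) by
\[
d_m:=\sum_{r\ge0}\kappa_r 1_{\{\sigma_r\le m<\sigma_{r+1}\}},\qquad m=n,\dots,N-1,
\]
and set \(d_N:=d_{N-1}\). Each event \(\{\sigma_r\le m\}\) is \(\mathcal F_{t_m}\)-measurable and \(\kappa_r\) is \(\mathcal F_{t_{\sigma_r}}\)-measurable, so \(d_m\) is \(\mathcal F_{t_m}\)-measurable and \(d\in\mathcal D_n^i\).

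Now compare the rewards pathwise. The running parts coincide because \(\int_{t_{\sigma_r}}^{t_{\sigma_{r+1}}}f^{\kappa_r}\,ds=\sum_{m=\sigma_r}^{\sigma_{r+1}-1}\int_{t_m}^{t_{m+1}}f^{d_m}\,ds\). For the switching costs, the case \(\sigma_1>n\) gives \(d_n=i\) and \(l_{id_n}(t_n)=0\). The case \(\sigma_1=n\) is excluded by the admissibility condition \(\P(\sigma_0<N,\sigma_0=\sigma_1)=0\). The regime changes \(d_{m}\ne d_{m+1}\) with \(m+1<N\) happen exactly at the dates \(t_{\sigma_{r+1}}<t_N\), with cost \(l_{\kappa_r\kappa_{r+1}}\). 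The terminal regime is \(d_N=d_{N-1}=\kappa_{N(\alpha)}\), so the terminal payoffs also agree. Hence \(L_n^i(d)=J_n^i(\alpha)\), and \(\overline Y^i_{t_n}\le\esssup_{d}L_n^i(d)\).

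\emph{Step 2 (\(\ge\)).} Given \(d\in\mathcal D_n^i\), define the switching times recursively:
\[
\sigma_0=n,\qquad \sigma_{r+1}:=\inf\{m>\sigma_r: d_m\neq d_{m-1}\}\wedge N,
\]
with \(\kappa_{r+1}:=d_{\sigma_{r+1}}\) on \(\{\sigma_{r+1}<N\}\) and \(\kappa_{r+1}:=\kappa_r\) otherwise. These are stopping times, \(\kappa_{r+1}\) is \(\mathcal F_{t_{\sigma_{r+1}}}\)-measurable, the times are strictly increasing before \(N\), and consecutive regimes differ. Along this control, Step 1's bookkeeping gives back the same running payoff, costs and terminal payoff, so \(J_n^i(\alpha)=L_n^i(d)\) on \(\{d_n=i\}\).

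\emph{Main obstacle.} The difficulty is the immediate switch at \(t_n\) on \(\{d_n\neq i\}\). The definition of \(\mathcal A_n^i\) forbids \(\sigma_1=\sigma_0=n\), so \(J_n^i\) can never charge \(l_{id_n}(t_n)\), whereas \(L_n^i\) does. A control started in regime \(i\) therefore cannot reproduce this decision. Two remedies are available:
\begin{itemize}
\item[(a)] Interpret \(\sigma_0=n\) as allowing an initial switch \(\kappa_1\) at \(n\), which matches the paper's cost term \(-l_{id_n}(t_n)\). In this case the construction above, with \(\kappa_0=i\) and \(\sigma_1=n\) on \(\{d_n\ne i\}\), gives equality directly.
\item[(b)] Show, via a one-step dynamic programming argument, that the supremum over \(\mathcal D_n^i\) is unaffected by restricting to \(d_n=i\). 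This would use the strict triangle inequality \(l_{ij}+l_{jk}>l_{ik}\) to rule out strict gains from compound switches, and the comparison at time \(n\) between staying and switching.
\end{itemize}
I would check remedy (a) against the paper's intended admissibility convention first, since the presence of \(l_{id_n}(t_n)\) in \(L_n^i\) strongly suggests it. On the event \(\{d_n=i\}\) the identity \(J_n^i(\alpha)=L_n^i(d)\) from Step 2 then yields \(\ge\).

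The final step is to exchange the essential suprema with the pathwise identities. Both maps send admissible objects to admissible objects, which gives \(\overline Y^i_{t_n}\ge L_n^i(d)\) for every \(d\in\mathcal D_n^i\), and hence the reverse inequality.
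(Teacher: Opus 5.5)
Your two regrouping maps are the same as the paper's Steps 1 and 2. The obstacle you flag is real, and the paper's own Step 2 overlooks it. With the no-simultaneous-switch condition imposed at $r=0$, one has $\sigma_1>n$ a.s.\ for $n<N$, so $J_n^i$ never charges $l_{id_n}(t_n)$. The paper's constructed control keeps $d_0=i$ on $[t_n,t_{\tau_1})$, so on $\{j_n\neq i\}$ it does not reproduce $L_n^i(j)$, and it may even violate $\kappa_1\neq\kappa_0$ there.

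Under that literal convention the statement is actually false. Take $N=1$, $T=1$, $n=0$, $\mathcal J=\{1,2\}$, $f^1\equiv-1$, $f^2\equiv0$, $\Phi^i\equiv0$, and $l_{12}=l_{21}\equiv\varepsilon\in(0,1)$; the strict triangle condition holds. Then $\sigma_1=N$ is forced, so $\overline Y^1_{t_0}=-1$. But $d=(2,2)\in\mathcal D_0^1$ gives $L_0^1(d)=-\varepsilon$. The same example shows that your remedy (b) cannot work: restricting to $d_n=i$ genuinely lowers the supremum. The strict triangle condition only rules out two switches at the same date, not a single immediate switch.

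You should therefore commit to remedy (a). Impose $\P(\sigma_r<N,\sigma_r=\sigma_{r+1})=0$ only for $r\ge1$, so that $\sigma_1=n$ is allowed and costs $l_{\kappa_0\kappa_1}(t_n)=l_{i\kappa_1}(t_n)$. This is also the convention the rest of the paper uses. The iterated-stopping representation \eqref{eq:equivalent-primal} allows $\tau=n$, and the primal DPP \eqref{eq:primal_DPP_value_process} optimizes over every $j$ at $t_n$ with cost $l_{ij}(t_n)$.

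Under (a), your proof needs two bookkeeping repairs.
\begin{itemize}
\item In Step 1 the case $\sigma_1=n$ is no longer excluded. On that event your formula gives $d_n=\kappa_1$, and $-l_{id_n}(t_n)$ matches the cost $-l_{\kappa_0\kappa_1}(t_{\sigma_1})$ in $J_n^i$.
\item In Step 2, set $\sigma_1:=n$ and $\kappa_1:=d_n$ on $\{d_n\neq i\}\in\mathcal F_{t_n}$, and then run your first-change recursion.
\end{itemize}
With the Step 2 repair, consecutive regimes differ: $\kappa_1=d_n\neq i$ on $\{d_n\neq i\}$, and $\kappa_{r+1}=d_{\sigma_{r+1}}\neq d_{\sigma_{r+1}-1}=\kappa_r$ at the later first-change dates. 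The identity $J_n^i(\alpha)=L_n^i(d)$ then holds on all of $\Omega$, not only on $\{d_n=i\}$. With these changes your argument is a complete proof of the corrected statement, and it is more careful than the one in the paper.
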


\vspace{0.3em}
\paragraph{Dual Upper Bound and Weak Duality.}
Given \(M=(M^j)_{j\in\mathcal J}\in(\mathcal M_{n,N})^J\), write
\(
\Delta M_{t_m}^j:=M_{t_{m+1}}^j-M_{t_m}^j.
\)
For \(i\in\mathcal J\), \(n\in\overline N\), and \(j=(j_m)_{m=n}^N\in\mathcal J_n\), define
\[
\widetilde U_n^{i,j}(M)
:=
\sum_{m=n}^{N-1}\Big(
\int_{t_m}^{t_{m+1}} f^{j_m}(s)ds
-l_{j_m j_{m+1}}(t_{m+1})1_{\{m+1<N\}}
-\Delta M_{t_m}^{j_m}
\Big)
-l_{i j_n}(t_n)
+\Phi^{j_N}
\]
and
\begin{equation}\label{eq:upper_bound_operator}
\widetilde U_n^i(M)
:=
\max_{j\in\mathcal J_n}\widetilde U_n^{i,j}(M).
\end{equation}
Equivalently, with the auxiliary index \(j_{n-1}:=i\),
\begin{equation}\label{eq:upper_bound_operator_equiv}
\widetilde U_n^i(M)
=
\max_{(j_m)_{m=n-1}^N\in\mathcal J_n^i}
\Big[
\sum_{m=n}^{N-1}\Big(
\int_{t_m}^{t_{m+1}} f^{j_m}(s)\,ds
-l_{j_{m-1}j_m}(t_m)
-\Delta M_{t_m}^{j_m}
\Big)
+\Phi^{j_N}
\Big].
\end{equation}
This operator is the basic dual upper-bound functional used below and in Section~\ref{sec:DeepSwitchMart}.

\begin{lemma}[Weak duality]\label{lem:weak-duality}
For any \(i\in\mathcal J\) and \(n\in\overline N\),
\begin{equation}\label{eq:mode-choosing-weak-dual}
\overline Y_{t_n}^i
\le
\operatorname*{ess\,inf}_{M\in(\mathcal M_{n,N})^J}
\mathbb E_{t_n}\!\big[\widetilde U_n^i(M)\big],
\qquad \mathbb P\text{-a.s.}
\end{equation}
\end{lemma}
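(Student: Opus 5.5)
By Theorem~\ref{thm:switching-equiv-mode-decide-primal} it suffices to show that, for every fixed $M=(M^j)_{j\in\mathcal J}\in(\mathcal M_{n,N})^J$ and every adapted regime decision $d\in\mathcal D_n^i$,
\[
L_n^i(d)\le \E_{t_n}\big[\widetilde U_n^i(M)\big],\qquad \P\text{-a.s.}
\]
Taking the essential supremum over $d$ on the left and then the essential infimum over $M$ on the right gives \eqref{eq:mode-choosing-weak-dual}. The case $n=N$ is trivial: both sides reduce to $\Phi^i$, up to the convention $l_{ii}\equiv 0$. We therefore fix $n<N$.

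The key step is that the martingale increments have zero conditional mean along any adapted regime sequence:
\[
\E_{t_n}\Big[\sum_{m=n}^{N-1}\Delta M_{t_m}^{d_m}\Big]=0.
\]
To see this, write $\Delta M_{t_m}^{d_m}=\sum_{j\in\mathcal J}1_{\{d_m=j\}}\Delta M^j_{t_m}$. Since $d_m$ is $\mathcal F_{t_m}$-measurable and $M^j$ is an $\mathbb F_n$-martingale,
\[
\E_{t_m}\big[\Delta M_{t_m}^{d_m}\big]=\sum_{j\in\mathcal J}1_{\{d_m=j\}}\,\E_{t_m}\big[\Delta M^j_{t_m}\big]=0 .
\]
Because $\mathcal J$ is finite and each $M^j_{t_m}\in L^1$, every term is integrable. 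The tower property then gives the claim. Consequently $L_n^i(d)$ equals $\E_{t_n}$ of the same pathwise expression with $-\Delta M_{t_m}^{d_m}$ inserted in each summand. That integrand is exactly $\widetilde U_n^{i,d(\omega)}(M)(\omega)$, evaluated at the realized sequence $d(\omega)$.

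Pathwise, $d(\omega)$ is a deterministic element of $\mathcal J_n$, since $d_N=d_{N-1}$ holds by definition of $\mathcal D_n^i$. Hence
\[
\widetilde U_n^{i,d(\omega)}(M)(\omega)\le \max_{j\in\mathcal J_n}\widetilde U_n^{i,j}(M)(\omega)=\widetilde U_n^i(M)(\omega).
\]
Taking $\E_{t_n}$ and using monotonicity of conditional expectation yields the desired inequality. Integrability of $\widetilde U_n^i(M)$ follows because it is a maximum of finitely many integrable variables: $\mathcal J_n$ is finite, each $f^j\in\mathbb L^1$, $l_{ij}(t)\in L^1$, and $\Phi^j\in L^1$. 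This justifies all the conditional expectations.

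The only delicate point is the measurability bookkeeping in the key step. One must check that the random index $d_m$ inside $\Delta M^{d_m}$ is handled correctly by the indicator decomposition, and that the identification $\widetilde U_n^{i,d}(M)=\sum_{j\in\mathcal J_n}1_{\{d=j\}}\widetilde U_n^{i,j}(M)$ is a legitimate finite decomposition. Both are routine because $\mathcal J$ is finite. No regularity beyond the stated integrability is needed.
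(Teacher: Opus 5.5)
Your proposal is correct and follows the paper's own argument. You reduce via Theorem~\ref{thm:switching-equiv-mode-decide-primal} to adapted regime decisions, remove the martingale increments along the adapted index by conditioning, and bound the pathwise payoff by the maximum over $\mathcal J_n$; you also supply the indicator-decomposition and integrability details that the paper's one-line proof leaves implicit.
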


\section{Duality of optimal switching problem: dual regime-decision}\label{sec:duality_equiv}

We first recall the duality for the associated classical iterated stopping problem (for detailed discussion of iterated stopping, see \Cref{sec:iterative-stopping-supplement} in the Supplementary Materials). 

For \(n\in\N^{-1}\), \(i,j\in\mathcal J\), set
\(
\mathcal R_n^{i,j}:=\overline Y_{t_n}^j-l_{ij}(t_n),
\;
\overline{\mathcal R}_n^i:=\max_{j\in\mathcal J^{-i}}\mathcal R_n^{i,j}.
\)

\subsection{Duality of iterative stopping problem}\label{subsec:dual_iterative_stopping}

By the Doob decomposition, there exist \(\overline M=(\overline M^i)_{i\in\mathcal J}\in(\mathcal M_N)^J\) and a family of nondecreasing \(\mathbb F^N\)-predictable processes \(\overline A=(\overline A^i)_{i\in\mathcal J}\), with \(\overline A_0^i=0\), such that
\[
\overline Y_{t_n}^i
=
\overline Y_0^i+\overline M_{t_n}^i-\overline A_{t_n}^i,
\qquad n\in\overline N.
\]
For \(i\in\mathcal J\), \(n\in\overline N\), \(M^i\in\mathcal M_{n,N}\), and \(m\in\overline N_n\), define
\begin{equation}\label{eq:iterative_dual_upper_two_regime}
    \overline U_{n,m}^i(M^i)
    :=
    \int_{t_n}^{t_m} f^i(s)\,ds
    +\overline{\mathcal R}_m^i\,1_{\{m<N\}}
    +\Phi^i\,1_{\{m=N\}}
    -M_{t_m}^i+M_{t_n}^i,
\end{equation}
and set
\(
\overline U_n^i(M^i):=\max_{m\in\overline N_n}\overline U_{n,m}^i(M^i).
\)
By \cite[Theorem~2.1]{Roger02} applied to the iterated stopping formulation, we obtain the following dual representation.

\begin{lemma}[Dual iterative stopping, surely optimal]\label{lem:dual_iterative}
For any \(i\in\mathcal J\), \(n\in\overline N\),
\begin{equation}\label{eq:equivalent-dual}
\overline Y_{t_n}^i
=
\operatorname*{ess\,inf}_{M^i\in\mathcal M_{n,N}}
\mathbb E_{t_n}\!\big[\overline U_n^i(M^i)\big]
=
\overline U_n^i(\overline M^i),
\qquad \mathbb P\text{-a.s.}   \tag{D0}
\end{equation}
\end{lemma}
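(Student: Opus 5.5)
The plan is to cast each regime $i$ as an optimal stopping problem with running reward and apply the Rogers duality argument to it, using the fact that $\overline Y^i$ itself is that problem's Snell envelope.

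\textbf{Step 1: the stopping problem.} Define the adapted reward
\[
G^i_{n,m}:=\int_{t_n}^{t_m} f^i(s)\,ds+\overline{\mathcal R}^i_m 1_{\{m<N\}}+\Phi^i 1_{\{m=N\}},\qquad m\in\overline N_n .
\]
I will first establish the iterated-stopping dynamic programming
\[
\overline Y^i_{t_N}=\Phi^i,\qquad
\overline Y^i_{t_n}=\max\Big\{\E_{t_n}\Big[\int_{t_n}^{t_{n+1}} f^i\,ds+\overline Y^i_{t_{n+1}}\Big],\ \overline{\mathcal R}^i_n\Big\}.
\]
This follows from \Cref{thm:switching-equiv-mode-decide-primal}. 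Either one stays in regime $i$ over $[t_n,t_{n+1})$, or one switches immediately to some $j\ne i$ at cost $l_{ij}(t_n)$. The strict triangular condition rules out a second instantaneous switch, so it is enough to compare against $\overline Y^j_{t_n}-l_{ij}(t_n)$.

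Consequently the process $\overline Y^i_{t_m}+\int_{t_n}^{t_m}f^i\,ds$ is the discrete Snell envelope of $(G^i_{n,m})_m$. In particular $\overline Y^i_{t_n}=\esssup_{\tau\in\calT_n}\E_{t_n}[G^i_{n,\tau}]$, and $\overline U^i_n(M^i)=\max_m\bigl(G^i_{n,m}-M^i_{t_m}+M^i_{t_n}\bigr)$.

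\textbf{Step 2: weak duality.} Take any $M^i\in\mathcal M_{n,N}$ and any $\tau\in\calT_n$. Optional sampling on the finite grid gives $\E_{t_n}[M^i_{t_\tau}-M^i_{t_n}]=0$. Hence
\[
\E_{t_n}[G^i_{n,\tau}]=\E_{t_n}\bigl[G^i_{n,\tau}-M^i_{t_\tau}+M^i_{t_n}\bigr]\le \E_{t_n}[\overline U^i_n(M^i)].
\]
Taking the esssup over $\tau$ and the essinf over $M^i$ yields $\le$ in \eqref{eq:equivalent-dual}. Integrability follows from $f^i\in\mathbb L^1$, $\Phi^i\in L^1$, and the integrability of $l_{ij}$ and of $\overline Y^j$.

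\textbf{Step 3: surely optimal martingale.} Plug in the Doob martingale $\overline M^i$ of $\overline Y^i$, adjusted so that it is the martingale part of the Snell envelope including the running integral. Concretely, the martingale increments are
\[
\Delta \overline M^i_{t_m}:=\overline Y^i_{t_{m+1}}+\int_{t_m}^{t_{m+1}}f^i\,ds-\E_{t_m}\Big[\overline Y^i_{t_{m+1}}+\int_{t_m}^{t_{m+1}}f^i\,ds\Big].
\]
Here one has to check that this matches the paper's notation for $\overline M^i$, or else absorb the adjustment.

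Then $\overline Y^i_{t_m}+\int_{t_n}^{t_m}f^i\,ds=\overline Y^i_{t_n}+\overline M^i_{t_m}-\overline M^i_{t_n}-(\overline A^i_{t_m}-\overline A^i_{t_n})$ with $\overline A^i$ nondecreasing. Since $\overline Y^i_{t_m}\ge G$-components, each term satisfies
\[
G^i_{n,m}-\overline M^i_{t_m}+\overline M^i_{t_n}\le \overline Y^i_{t_n}-(\overline A^i_{t_m}-\overline A^i_{t_n})\le \overline Y^i_{t_n}\quad\text{a.s.}
\]
Therefore $\overline U^i_n(\overline M^i)\le\overline Y^i_{t_n}$ pathwise. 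Combined with Step 2, this gives $\overline Y^i_{t_n}\le \E_{t_n}[\overline U^i_n(\overline M^i)]\le \overline Y^i_{t_n}$. Because $\overline U^i_n(\overline M^i)-\overline Y^i_{t_n}\le 0$ has zero conditional mean, it vanishes a.s. This is the "surely optimal" equality.

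The main obstacle is Step 1: rigorously deriving the one-step recursion, and thus the Snell-envelope identification, from the switching formulation via \Cref{thm:switching-equiv-mode-decide-primal}. This requires a pasting argument for regime-decision sequences and an upward-directed family to exchange esssup and conditional expectation. Everything else is the standard argument of \cite[Theorem~2.1]{Roger02}.
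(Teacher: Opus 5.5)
Your proposal is correct and is essentially the paper's argument. The paper applies \cite[Theorem~2.1]{Roger02} to the iterated-stopping formulation, taking that formulation and its Snell-envelope property from \cite{martyr16-discrete-switching}, whereas you re-derive the one-step recursion yourself from \Cref{thm:switching-equiv-mode-decide-primal} by pasting and the triangle condition.

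Your hedge in Step~3 resolves in your favour, but nothing can be ``absorbed''. $\overline M^i$ must be the martingale part of the supermartingale $\overline Y^i+\int_0^{\cdot}f^i(s)\,ds$, which is the decomposition whose compensator is genuinely nondecreasing. With the Doob martingale of $\overline Y^i$ alone, the non-predictable part $\int_{t_m}^{t_{m+1}}f^i(s)\,ds-\E_{t_m}\bigl[\int_{t_m}^{t_{m+1}}f^i(s)\,ds\bigr]$ of the running reward survives in $\overline U_n^i(\overline M^i)$, and the pathwise identity then fails in general.
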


Moreover, \Cref{lem:supermartingale_envelop}, together with \(l_{ii}\equiv0\), implies that for any \(i\in\mathcal J\) and any discrete stopping time \(\tau\in\mathcal T^N\),
\begin{equation}\label{eq:stopping-equality-regime-decision}
\overline Y_{t_\tau}^i
=
\overline{\mathcal R}_\tau^i\,1_{\{\tau<N\}}
+\Phi^i\,1_{\{\tau=N\}}.
\end{equation}

\begin{remark}[Incomputable upper bound]
    Although Lemma~\ref{lem:dual_iterative} yields a form of ``duality,'' the resulting upper bound is not computable, due to the coupling of the value processes $ \overline Y_{t_n}^j ,\; j\neq i $ in the reflection term $ \overline{\mathcal R}_n^i $ of the upper-bound operator $ \overline U_n^i $. This motivates us to develop a complete duality theory that yields a computable upper bound for $ \overline Y_{t_n}^i $.
\end{remark}
We exploit this sure optimality property to iteratively expand the inner value functions appearing in the dual upper bound, thereby deriving strong duality for the regime-decision formulation.

\subsection{Doob charaterization of martingale penalty}\label{subsec:surely_expansion}

By Lemma~\ref{lem:dual_iterative}, the Doob martingales \((\overline M^i)_{i\in\mathcal J}\) are surely optimal. For \(i\in\mathcal J\) and \(n\in\overline N\), define the induced stopping time and switching rule by
\begin{align}
\tau_n^i := \overline m(n,i)
& := \inf \big\{ \argmax_{m\in\overline N_n}\overline U_{n,m}^i(\overline M^i) \big\}, 
\label{eq:dual-stopping} \\
\iota_n^i:=j(n,i)
&:= \inf \big\{ \argmax_{j\in\mathcal J^{-i}}\mathcal R_n^{i,j}\big\} 1_{(n<N)} + i1_{(n=N)}.
\label{eq:mode-decision-distinguish-def}
\end{align}
The discussion of basic facts of \(\tau_n^i\) and \(\iota_n^i\), such as measurability, optimality and representation identity, are presented in Supplementary Materials.

We next introduce the events associated with possible consecutive switches.

\begin{definition}
For \(i\in\mathcal J\) and \(n\in\N^{-1}\), let
\begin{align*}
& A^{i,n} := \{\tau_n^{\iota_n^i}=n\}\cap\{\iota_n^{\iota_n^i}\neq i\}, \quad
B^{i,n} := \{\tau_{\tau_n^i}^{\,\iota_{\tau_n^i}^i}=\tau_n^i\},\\
& C^{i,n} := \{\tau_{\tau_n^i}^{\,\iota_{\tau_n^i}^i}=\tau_n^i\}\cap
           \{\iota_{\tau_n^i}^{\,\iota_{\tau_n^i}^i}=i\}\subset B^{i,n}, \quad
S^{i,n} := \{\tau_n^i<N\},\\
& D^{i,n} := \{\tau_n^i=n\}\cap\{\tau_n^{\iota_n^i}=n\} =(B^{i,n}\cap S^{i,n})\cap\{\tau_n^i=n\}\subset B^{i,n}\cap S^{i,n} .
\end{align*}
\end{definition}

Using the triangular condition on the switching costs, we rule out immediate consecutive switching.

\begin{lemma}[Sub-optimality of consecutive switches]\label{lem:no-free-lunch-dual}
For any \(i\in\mathcal J\) and \(n\in\N^{-1}\),
\(
\P(A^{i,n})=\P(B^{i,n}\cap S^{i,n})
=\P(C^{i,n}\cap S^{i,n})
=\P(D^{i,n})=0.
\)
Consequently,
\(
1_{S^{i,n}}=1_{(B^{i,n})^c\cap S^{i,n}}
\; \P\text{-a.s.}
\)

Set
\(
\widetilde A_m^{i,n}:=\{m<N\}\cap(A^{i,m})^c,
\) for \(m\in\overline N_n\). 
Then, 
\begin{align}
\overline Y_{t_n}^i
&=\max_{m\in\overline N_n}\Big(
\int_{t_n}^{t_m} f^i(s)\,ds
+\mathcal R_m^{i,\iota_m^i}1_{\widetilde A_m^{i,n}}
+\Phi^i1_{(m=N)}
-\overline M_{t_m}^i+\overline M_{t_n}^i
\Big), \label{eq:iterative-dual-surely-optimal-2}\\
\tau_n^i
&=\inf\;\argmax_{m\in\overline N_n}\Big(
\int_{t_n}^{t_m} f^i(s)\,ds
+\mathcal R_m^{i,\iota_m^i}1_{\widetilde A_m^{i,n}}
+\Phi^i1_{(m=N)}
-\overline M_{t_m}^i+\overline M_{t_n}^i
\Big), \label{eq:iterative-dual-surely-optimal-stopping-time-2}\\
\overline Y_{t_n}^i
&=\int_{t_n}^{t_{\tau_n^i}} f^i(s)\,ds
+\mathcal R_{\tau_n^i}^{i,\iota_{\tau_n^i}^i}\,
1_{(B^{i,n})^c\cap S^{i,n}}
+\Phi^i1_{(\tau_n^i=N)}
-\overline M_{t_{\tau_n^i}}^i+\overline M_{t_n}^i.
\label{eq:stopped-Y}
\end{align}
\end{lemma}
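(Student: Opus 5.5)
The core of the argument is a pathwise comparison based on the sure optimality of the Doob martingales (Lemma~\ref{lem:dual_iterative}) and on \eqref{eq:stopping-equality-regime-decision}. First I record two facts. By sure optimality, for every $m\ge n$ we have $\overline Y^i_{t_n}\ge \overline U^i_{n,m}(\overline M^i)$, with equality at $m=\tau_n^i$. Since $\overline Y^i_{t_m}=\overline Y^i_{t_n}+\int_{t_n}^{t_m}f^i\,ds$-type identities are not available directly, I instead work with the telescoped form: from $\overline Y^i_{t_n}=\overline U_n^i(\overline M^i)$ and the same at time $m$, one gets $\overline Y^i_{t_n}\ge \int_{t_n}^{t_m}f^i+\overline Y^i_{t_m}-\overline M^i_{t_m}+\overline M^i_{t_n}$. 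This holds with equality on $\{\tau_n^i\ge m\}$, where $\tau^i_n=\tau^i_m$. The second fact is that on $\{\tau_n^i=m<N\}$, \eqref{eq:stopping-equality-regime-decision} gives $\overline Y^i_{t_m}=\mathcal R_m^{i,\iota_m^i}$, that is, $\overline Y^i_{t_m}=\overline Y^{j}_{t_m}-l_{ij}(t_m)$ with $j=\iota_m^i$.

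\textbf{Null events.} For $A^{i,n}$, write $j=\iota_n^i$ and $k=\iota_n^j\ne i$. On $\{\tau_n^j=n\}$ we have $\overline Y^j_{t_n}=\overline Y^k_{t_n}-l_{jk}(t_n)$. Since $\overline Y^i_{t_n}\ge \overline{\mathcal R}^i_n\ge \mathcal R^{i,j}_n$ and also $\overline Y^i_{t_n}\ge\mathcal R^{i,k}_n$, this yields
\[
\mathcal R^{i,j}_n=\overline Y^k_{t_n}-l_{jk}(t_n)-l_{ij}(t_n)<\overline Y^k_{t_n}-l_{ik}(t_n)=\mathcal R^{i,k}_n,
\]
where the strict inequality is the triangular condition ($k\ne i$; if $k=i$ is excluded by definition then $i\ne j\ne k$). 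This contradicts the maximality of $j$ in \eqref{eq:mode-decision-distinguish-def} on $A^{i,n}$, so $\P(A^{i,n})=0$.

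The case $k=i$ is the event $C^{i,n}$, taken at the stopping time $\tau=\tau_n^i$ on $S^{i,n}$. There $\overline Y^i=\overline Y^j-l_{ij}$ and $\overline Y^j=\overline Y^i-l_{ji}$, so $l_{ij}+l_{ji}=0$. This contradicts $l_{ij}+l_{ji}>l_{ii}=0$. For $B^{i,n}\cap S^{i,n}$ I apply the $A$-argument at the random date $\tau_n^i$. I localize on $\{\tau_n^i=m\}$ for each $m<N$ and use that $\tau^i_n=\tau^i_m$ there; the case $k\ne i$ is then handled by $A^{i,m}$, and the case $k=i$ by $C$. Finally, $D^{i,n}\subset B^{i,n}\cap S^{i,n}$ by its stated identity. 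The consequence $1_{S}=1_{B^c\cap S}$ is immediate.

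\textbf{Identities.} Formula \eqref{eq:stopped-Y} follows from $\overline Y^i_{t_n}=\overline U^i_{n,\tau_n^i}(\overline M^i)$ together with $\overline{\mathcal R}^i_{\tau}=\mathcal R^{i,\iota^i_\tau}_\tau$, after inserting $1_{B^c\cap S}=1_S$ a.s. For \eqref{eq:iterative-dual-surely-optimal-2}, I note that the modified terms are at most the original ones, since replacing $\overline{\mathcal R}^i_m$ by $\mathcal R^{i,\iota_m^i}_m1_{\widetilde A_m}$ only lowers a term wherever $\mathcal R^{i,\iota_m^i}_m>0$. This step needs care: if $\mathcal R$ is negative, dropping it raises the term. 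For that reason I plan to argue instead that $1_{\widetilde A^{i,n}_m}=1_{\{m<N\}}$ a.s., because $\P(A^{i,m})=0$. With this, the modified maximum coincides a.s. with $\overline U^i_n(\overline M^i)=\overline Y^i_{t_n}$, and the argmax identity \eqref{eq:iterative-dual-surely-optimal-stopping-time-2} follows likewise, the two families being equal a.s. for every $m$.

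\textbf{Main obstacle.} The delicate step is the localization at the random time $\tau_n^i$ for $B^{i,n}$. There one must verify the measurability of the composed indices $\iota^{\iota^i_\tau}_\tau$ and $\tau^{\iota}_\tau$; I would rely on the Supplementary basic facts for this.
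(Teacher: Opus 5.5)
Your proposal is correct and follows the paper's proof essentially step for step: the triangle-plus-maximality contradiction for $A^{i,m}$, the decomposition $B^{i,n}\cap S^{i,n}\subset\bigcup_{m<N}(A^{i,m}\cap\{\tau_n^i=m\})\cup(C^{i,n}\cap S^{i,n})$ with the $C$-part excluded by $l_{ij}+l_{ji}>l_{ii}=0$, the inclusion $D^{i,n}\subset B^{i,n}\cap S^{i,n}$, and the identities obtained from $1_{\widetilde A_m^{i,n}}=1_{\{m<N\}}$ a.s. One refinement: your ``second fact'' is the equality at the optimal time $\tau_n^i$, i.e.\ the supplementary identity \eqref{eq:stopping-equality-dual}, which your telescoped first fact also yields. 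It is better cited that way than as a statement valid for arbitrary stopping times.
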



We next construct admissible regime-decision candidates, which would satisfy the required maximization properties in subsequent sections.

\begin{theorem}[Optimal regime-decision candidate]\label{thm:construction-optimal-mode-decision}
Define \(j^{i,m}=(j_k^{i,m})_{k=m}^N\) backwardly:
\(
j_N^{i,N}:=\iota_N^i=i,\; i\in\mathcal J
\)
, and for \(m=N-1,\dots,0\), \(k\in\overline N_m\), \( i\in \J \), with notation \(\hat\tau_m^i:=\tau_m^{\iota_m^i}\), \( T^{i,1}_m:= \{\tau_m^i>m\} ,\; T^{i,2}_m := \{\tau_m^i=m\} \), define 
\begin{equation}\label{eq:construction-mode-decision}
j_k^{i,m}:=
\begin{cases}
i, & m\le k<\tau_m^i,\\
j_k^{\iota_{\tau_m^i}^i,\tau_m^i}, & \tau_m^i\le k\le N,
\end{cases}
 \text{on } \; T^{i,1}_m, \; \text{and}  \; \; \; \\
\begin{cases}
\iota_m^i, & m\le k<\hat\tau_m^i,\\
j_k^{\iota_{\hat\tau_m^i}^{\iota_m^i},\,\hat\tau_m^i}, & \hat\tau_m^i\le k\le N,
\end{cases}
 \text{on } \; T^{i,2}_m.
\end{equation}
Then, for every \(n\in\overline N\) and \(i\in\mathcal J\), \(j^{i,n}\) is well defined, \(\mathbb F_n\)-adapted, and \(j^{i,n}\in\mathcal D_n^i\). Moreover, \(\mathbb P\)-a.s.:

\emph{(i)}. 
        \(j_n^{i,n}\in\argmax_{j\in\mathcal J}[\mathcal R_n^{i,j}1_{(n<N)}] \). If $n<N$,
        \(
        j_k^{i,n}\in\argmax_{j\in\mathcal J} [ \mathcal R_k^{j_{k-1}^{i,n},j}1_{(k<N)} ] \), 
        \( k\in\overline N_{n+1} \), and furthermore,  
    \begin{itemize}
        \item for \(k\in\N_{n+1}^{-1}\), \( \overline Y_{t_k}^{j_{k-1}^{i,n}} >\overline{\mathcal R}_k^{j_{k-1}^{i,n}}  \) if \( \tau_k^{j_{k-1}^{i,n}}>k \), and \( \overline Y_{t_k}^{j_{k-1}^{i,n}}  = \overline{\mathcal R}_k^{j_{k-1}^{i,n}} \) if \( \tau_k^{j_{k-1}^{i,n}}=k \); 
        \item  \( \overline Y_{t_n}^{i} >\overline{\mathcal R}_n^{i} \) if \( \tau_n^i>n \), and \( \overline Y_{t_n}^{i} =\overline{\mathcal R}_n^{i} \), \( Y_{t_n}^{\iota_n^i}>\overline{\mathcal R}_n^{\iota_n^i} \) if \( \tau_n^i=n \). 
    \end{itemize}


\emph{(ii)}. (DPP) If \(n<N\), then
\(
j_k^{i,n}=j_k^{j_n^{i,n},\,n+1},\; k\in\overline N_{n+1} ;
\)

\emph{(iii)}. 
\(
j_k^{i,n}=j_k^{\iota_n^i,n},\; k\in\overline N_n
\) if \( \tau_n^i=n \). 
\end{theorem}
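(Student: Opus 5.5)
We argue by backward induction on \(m=N,N-1,\dots,0\), proving simultaneously for all \(i\in\mathcal J\) that \(j^{i,m}\) is well defined, \(\mathbb F_m\)-adapted, lies in \(\mathcal D_m^i\), and satisfies (i)--(iii). The base case \(m=N\) is trivial: \(j^{i,N}=(i)\) and \(\mathcal D_N^i=\{(i)\}\). For the inductive step at \(m<N\), the crucial input is \Cref{lem:no-free-lunch-dual}. On \(T_m^{i,1}\) we have \(\tau_m^i\ge m+1\), so \(j^{\iota^i_{\tau_m^i},\tau_m^i}\) is already defined by the induction hypothesis. On \(T_m^{i,2}\) we need \(\hat\tau_m^i=\tau_m^{\iota_m^i}>m\). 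Indeed \(T_m^{i,2}\cap\{\hat\tau_m^i=m\}=D^{i,m}\), which is null, so \(\hat\tau_m^i\ge m+1\) a.s. and the recursion again refers only to later indices. Well-definedness therefore holds up to a null set, which we fix by modifying on it, for instance by setting \(j^{i,m}\equiv i\) there.

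For measurability we decompose over the finitely many values of \(\tau_m^i\), \(\iota_m^i\), \(\hat\tau_m^i\). By the Supplementary facts, \(\tau_m^i\) is an \(\mathbb F_m\)-stopping time and \(\iota_m^i\) is \(\mathcal F_{t_m}\)-measurable. For fixed \(k\), \(j^{i,m}_k\) equals \(\sum_{l\le k}\sum_{a}1_{\{\tau_m^i=l,\ \iota^i_l=a\}}\,j_k^{a,l}\) plus \(i1_{\{\tau_m^i>k\}}\), and analogously on \(T^{i,2}_m\). Each of these pieces is \(\mathcal F_{t_k}\)-measurable by the stopping-time property and the induction hypothesis. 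The terminal condition \(d_N=d_{N-1}\) is inherited from the inner sequence when \(\tau\le N-1\). When \(\tau=N\) we get \(j_N=j_N^{i,N}=i=j_{N-1}\), and \(\tau_{N-1}=N\) gives \(j_{N-1}=i\).

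For (i) we use the characterization \(\overline Y^i_{t_n}\ge\overline{\mathcal R}^i_n\) with equality iff \(\tau_n^i=n\), which follows from \eqref{eq:stopping-equality-regime-decision} and the minimality in \eqref{eq:dual-stopping}. If \(\tau_n^i>n\), then \(j_n^{i,n}=i\), and \(\mathcal R_n^{i,i}=\overline Y^i_{t_n}>\overline{\mathcal R}^i_n\), so \(i\) is the argmax. If \(\tau^i_n=n\), then \(j_n=\iota^i_n\) attains \(\overline{\mathcal R}^i_n=\overline Y^i_{t_n}\ge\mathcal R_n^{i,i}\), and \(\overline Y^{\iota_n^i}_{t_n}>\overline{\mathcal R}^{\iota_n^i}_n\) because \(\hat\tau_n^i>n\) a.s. For \(k\ge n+1\) we read off the prefix. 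Strictly before the switching time, \(j_{k-1}=j_k\) equals the current regime, whose stopping time exceeds \(k\), so the strict inequality holds. At a switching time \(k=\tau\), the new regime is \(\iota_k\) of the old one, which reduces to the case already treated; \Cref{lem:no-free-lunch-dual}, specifically \(\P(B\cap S)=0\), ensures the new regime does not stop again at \(k\). Everything else is covered by the induction hypothesis applied at the later starting index.

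Properties (ii) and (iii) follow by unwinding the definition. Statement (iii) holds essentially by construction: on \(T_n^{i,2}\), \(j^{i,n}\) is the sequence built for \(\iota_n^i\) started at \(n\) on \(\{\tau_n^{\iota_n^i}>n\}\). For (ii), if \(\tau^i_n>n\), then \(j_n^{i,n}=i\) and \(\tau_{n+1}^i=\tau_n^i\) on this event. The latter is the time-consistency of the dual stopping time, which comes from the pathwise representation \eqref{eq:iterative-dual-surely-optimal-stopping-time-2}, since the maximand for \(n\) and \(n+1\) differ by an \(\mathcal F_{t_{n+1}}\)-measurable constant. The case \(\tau_n^i=n\) reduces to (iii) together with the same identity for \(\iota_n^i\). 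The main obstacle is this consistency \(\tau^i_n=\tau^i_{n+1}\) on \(\{\tau^i_n>n\}\), together with careful bookkeeping of the null sets from \Cref{lem:no-free-lunch-dual} across the induction. I would handle both by using the sure-optimality (pathwise) identities of \Cref{lem:dual_iterative} rather than conditional-expectation arguments.
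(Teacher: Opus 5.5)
Your proposal is correct and follows essentially the same backward-induction argument as the paper. The ingredients match: $\P(D^{i,m})=0$ for well-definedness, finite partitions over the values of $\tau_m^i,\iota_m^i,\hat\tau_m^i$ for adaptedness, the time-consistency $\tau_n^i=\tau_{n+1}^i$ on $\{\tau_n^i>n\}$ for (ii), and the characterization $\tau_n^i=n\iff\overline Y_{t_n}^i=\overline{\mathcal R}_n^i$ (the paper's supplementary DPP lemma) for (i). The differences are minor: the paper proves (ii)--(iii) first and transports the $k\ge n+1$ part of (i) from the induction hypothesis via (ii), rather than reading it off the path; it also explicitly handles the subcase $\tau_{n+1}^i=n+1$ in (ii) by invoking (iii) at time $n+1$, which your ``unwinding'' leaves implicit.
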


We now derive the pathwise expansion of \(\overline Y\) along the candidates \(j^{i,n}\).   

\begin{theorem}[Surely expansion theorem]\label{thm:expansion-surely-optimal}
For the candidates \(j^{i,n}\) in \eqref{eq:construction-mode-decision},
\(
\overline Y_{t_n}^i=\widetilde U_n^{i,j^{i,n}}(\overline M),
\; i\in\mathcal J,\ n\in\overline N,\ \mathbb P\text{-a.s.}
\)
\end{theorem}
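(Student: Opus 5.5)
Our plan is a backward induction on \(n\), from \(n=N\) down to \(n=0\), proving simultaneously for all \(i\in\mathcal J\) that \(\overline Y_{t_n}^i=\widetilde U_n^{i,j^{i,n}}(\overline M)\) \(\mathbb P\)-a.s.

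\emph{Base case and pathwise split.} For \(n=N\), \(j^{i,N}=(i)\). The sum in \(\widetilde U_N^{i,\cdot}\) is empty and \(l_{ii}=0\), so \(\widetilde U_N^{i,j^{i,N}}(\overline M)=\Phi^i=\overline Y_{T}^i\). For the induction step, fix \(n<N\) and assume the claim for every \(m\in\overline N_{n+1}\) and every regime. We split \(\Omega\) into \(T_n^{i,1}=\{\tau_n^i>n\}\) and \(T_n^{i,2}=\{\tau_n^i=n\}\). On each piece we rewrite \(\widetilde U_n^{i,j^{i,n}}(\overline M)\) by cutting the sum at the first switching date. A concatenation identity is used throughout: on \(\{\tau=m\}\) with \(m\ge n\), if \(j^{i,n}\) equals \(i\) on \([n,m)\) and equals \(j^{\iota,m}\) from \(m\) on, then
\[
\widetilde U_n^{i,j^{i,n}}(\overline M)=\int_{t_n}^{t_m}f^i\,ds-\overline M^i_{t_m}+\overline M^i_{t_n}+\widetilde U_m^{i,j^{\iota,m}}(\overline M).
\]
This is a direct rewriting of \eqref{eq:upper_bound_operator_equiv}. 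The cost \(l_{i\,j_m}(t_m)\) is exactly the leading cost term of \(\widetilde U_m^{i,\cdot}\), and \(l_{ii}=0\) kills the costs on \([n,m)\). Because \(\tau_n^i\) is a stopping time, we decompose over its finitely many values \(m\) and use indicator sums.

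\emph{Case \(\tau_n^i>n\).} Here \(j^{i,n}\) stays in regime \(i\) until \(\tau_n^i\) and then follows \(j^{\iota^i_{\tau_n^i},\tau_n^i}\). On \(\{\tau_n^i=m\}\) with \(m>n\), we apply the induction hypothesis at time \(m\) to the regime \(\iota^i_m\). This gives \(\widetilde U_m^{\iota_m^i,j^{\iota_m^i,m}}(\overline M)=\overline Y^{\iota_m^i}_{t_m}\). The difference between \(\widetilde U_m^{i,j^{\iota^i_m,m}}\) and \(\widetilde U_m^{\iota_m^i,j^{\iota^i_m,m}}\) lies only in the leading cost, \(l_{i\,j_m}\) versus \(l_{\iota_m^i\,j_m}\). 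By \Cref{thm:construction-optimal-mode-decision}(iii) and (i), on \(\{\tau_m^{\iota_m^i}>m\}\) we have \(j_m^{\iota^i_m,m}=\iota_m^i\), so the leading cost becomes \(l_{i\iota^i_m}(t_m)\) and the right-hand side equals \(\mathcal R^{i,\iota^i_m}_m\). The complementary event \(\{\tau_m^{\iota^i_m}=m\}\cap S^{i,n}\) is \(B^{i,n}\cap S^{i,n}\), which is null by \Cref{lem:no-free-lunch-dual}. On \(\{\tau_n^i=N\}\) the path is constant equal to \(i\), and the expression reduces to \(\int_{t_n}^T f^i\,ds+\Phi^i-\overline M^i_T+\overline M^i_{t_n}\). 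Both subcases then coincide with \eqref{eq:stopped-Y}, which gives \(\overline Y^i_{t_n}\).

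\emph{Case \(\tau_n^i=n\).} By \Cref{thm:construction-optimal-mode-decision}(iii), \(j^{i,n}=j^{\iota_n^i,n}\). Therefore \(\widetilde U_n^{i,j^{i,n}}=\widetilde U_n^{\iota_n^i,j^{\iota_n^i,n}}-l_{i\iota_n^i}(t_n)\), using again that the leading cost is \(l_{i\iota^i_n}\) because \(j^{\iota^i_n,n}_n=\iota^i_n\). By \Cref{lem:no-free-lunch-dual}, \(D^{i,n}\) is null, so \(\tau_n^{\iota_n^i}>n\) a.s. here. The previous case at the same time \(n\) applies to regime \(\iota_n^i\); it uses only the induction hypothesis at times \(>n\). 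We therefore first prove the claim on \(\{\tau_n^{k}>n\}\) for all regimes \(k\), and only then treat this case. This yields \(\overline Y^{\iota^i_n}_{t_n}-l_{i\iota^i_n}(t_n)=\mathcal R_n^{i,\iota_n^i}=\overline{\mathcal R}_n^i\), which equals \(\overline Y^i_{t_n}\) by \Cref{thm:construction-optimal-mode-decision}(i).

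\emph{Main obstacle.} The delicate point is bookkeeping the leading switching cost at the junction time \(m=\tau_n^i\). We must show that the concatenated sequence really starts in regime \(\iota_m^i\), ruling out a second immediate switch. This is exactly where the null events \(A,B,C,D\) of \Cref{lem:no-free-lunch-dual} enter. The indicator \(1_{(B^{i,n})^c\cap S^{i,n}}\) in \eqref{eq:stopped-Y} must be matched with the case split almost surely, not surely.
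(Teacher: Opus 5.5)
Your proposal is correct and follows essentially the same route as the paper's proof. Both run a backward induction with the split $\{\tau_n^i>n\}$ versus $\{\tau_n^i=n\}$, and in the first case expand at $\tau_n^i$ with the induction hypothesis applied to regime $\iota^i_{\tau_n^i}$. The null sets $B^{i,n}\cap S^{i,n}$ and $D^{i,n}$ from \Cref{lem:no-free-lunch-dual} force the concatenated rule to start in $\iota$ at the junction, and the result is matched with \eqref{eq:stopped-Y}. The second case is reduced to the first for regime $\iota_n^i$ via \Cref{thm:construction-optimal-mode-decision}(iii).

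One citation is slightly off. The identity $j_m^{\iota_m^i,m}=\iota_m^i$ on $\{\tau_m^{\iota_m^i}>m\}$ follows directly from the first branch of \eqref{eq:construction-mode-decision}, or from (i) as the paper argues via $\operatorname{argmax}=\{\iota\}$, not from (iii).
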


\subsection{Dual dynamic programming principle}\label{subsec:Dual_DPP}

In this section we establish the dynamic programming principle for the dual upper bound \eqref{eq:upper_bound_operator}, which is the key ingredient for strong duality.

For fixed \(n\in\overline N\), define pathwise
\(
\mathcal J_S(\omega):=\{j\in\mathcal J: j_n^{j,n}(\omega)=j\} \subset \J ,
\;
\mathcal J_N(\omega):=\mathcal J\setminus \mathcal J_S(\omega).
\)
The next lemma shows that the candidate rule \(j^{i,n}\) cannot switch twice at time \(t_n\) with positive probability.

\begin{lemma}\label{lem:switching-times-no-free-lunch}
Fix \(n\in\overline N\). Then, \(\mathbb P\)-a.s.,
\[
    \emph{(i)}. \; \; \mathcal J_S\neq\varnothing ; \; \; \emph{(ii)}. \; \; j_n^{j,n}\in\mathcal J_S ,\; \forall \; j\in\mathcal J ,\; \text{or equivalently}, \;  j_n^{\,j_n^{j,n},\,n}=j_n^{j,n},\; j\in\mathcal J.
\]
\end{lemma}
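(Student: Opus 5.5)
The plan is to identify $j_n^{j,n}$ explicitly from the backward construction \eqref{eq:construction-mode-decision}, and then to read off both claims from the null events in \Cref{lem:no-free-lunch-dual}. Since $\mathcal J$ is finite, I will work outside the finite union of null sets $\bigcup_{j\in\mathcal J}D^{j,n}$. The case $n=N$ is trivial. There $j_N^{j,N}=\iota_N^j=j$ for every $j$, so $\mathcal J_S=\mathcal J$, and both (i) and (ii) hold surely. Hence assume $n<N$ from now on.

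\textbf{Step 1: the first entry.} Fix $j\in\mathcal J$.
\begin{itemize}
\item On $T^{j,1}_n=\{\tau_n^j>n\}$, the construction gives $j_n^{j,n}=j$, because $n<\tau_n^j$.
\item On $T^{j,2}_n=\{\tau_n^j=n\}$, the construction gives $j_n^{j,n}=\iota_n^j$ provided $\hat\tau_n^j=\tau_n^{\iota_n^j}>n$. The event where this fails, $\{\tau_n^j=n\}\cap\{\tau_n^{\iota_n^j}=n\}$, is exactly $D^{j,n}$, which is $\mathbb P$-null by \Cref{lem:no-free-lunch-dual}.
\end{itemize}
Therefore, a.s.,
\[
j_n^{j,n}=j\,1_{\{\tau_n^j>n\}}+\iota_n^j\,1_{\{\tau_n^j=n\}} .
\]
As a consequence, $\mathcal J_S=\{j:\tau_n^j>n\}$ a.s. Indeed, if $\tau_n^j=n$ then $j_n^{j,n}=\iota_n^j\in\mathcal J^{-j}$, so $j_n^{j,n}\neq j$.

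\textbf{Step 2: proof of (ii).} Fix $j$.
\begin{itemize}
\item If $\tau_n^j>n$, then $j_n^{j,n}=j\in\mathcal J_S$ by Step 1.
\item If $\tau_n^j=n$, then $j_n^{j,n}=\iota_n^j=:k$. Outside $D^{j,n}$ we have $\tau_n^{k}>n$, so $k\in\mathcal J_S$ by Step 1.
\end{itemize}
Alternatively, this is the last bullet of \Cref{thm:construction-optimal-mode-decision}(i): on $\{\tau_n^j=n\}$ one has $\overline Y_{t_n}^{\iota_n^j}>\overline{\mathcal R}_n^{\iota_n^j}$, hence $\tau_n^{\iota_n^j}>n$ by \eqref{eq:stopping-equality-regime-decision}. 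The equivalent form $j_n^{j_n^{j,n},n}=j_n^{j,n}$ is just the definition of $\mathcal J_S$ applied to $j_n^{j,n}$.

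\textbf{Step 3: proof of (i).} Pick any $j\in\mathcal J$. By (ii), $j_n^{j,n}\in\mathcal J_S$, so $\mathcal J_S\neq\varnothing$ a.s.

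The only delicate point is Step 1 on $T^{j,2}_n$. There the recursion for $j^{j,n}$ refers to $j^{\cdot,\hat\tau_n^j}$, which is itself at time $n$ when $\hat\tau_n^j=n$, so a priori it could be circular. The nullity of $D^{j,n}$, which rules out two consecutive immediate switches and rests on the strict triangular condition, is exactly what removes this case. Once that is handled, the remaining steps are bookkeeping over finitely many null sets.
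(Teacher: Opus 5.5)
Your proof is correct, but it takes a different route from the paper's.

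The paper argues by contradiction directly at the level of values. If $q:=j_n^{j,n}\neq j$ and $r:=j_n^{q,n}\neq q$, then \Cref{thm:construction-optimal-mode-decision}(i) gives $\overline Y_{t_n}^j=\mathcal R_n^{j,q}$ and $\overline Y_{t_n}^q=\mathcal R_n^{q,r}$. The strict triangular condition then produces $\overline Y_{t_n}^j<\mathcal R_n^{j,r}\le \overline Y_{t_n}^j$. This chain is run once on $\{\mathcal J_S=\varnothing\}$ for (i) and once more for (ii).

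You instead unpack the recursion \eqref{eq:construction-mode-decision} to get $j_n^{j,n}=j\,1_{\{\tau_n^j>n\}}+\iota_n^j\,1_{\{\tau_n^j=n\}}$ a.s. This identifies $\mathcal J_S=\{j:\tau_n^j>n\}$ explicitly. You then reduce (ii) to the nullity of $D^{j,n}$ already established in \Cref{lem:no-free-lunch-dual}, and obtain (i) as an immediate consequence of (ii).

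Your version buys economy and a sharper description of $\mathcal J_S$. The triangular-inequality contradiction is not repeated, because it is already encoded in $\mathbb P(D^{j,n})=0$. The paper's version never looks inside the recursion and uses only the argmax/value properties of \Cref{thm:construction-optimal-mode-decision}(i).

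A minor remark on the alternative justification in your Step~2. What you actually use is the equality at the particular time $\tau_n^{\iota_n^j}$, i.e.\ \eqref{eq:stopping-equality-dual}, or equivalently the second bullet of \Cref{thm:construction-optimal-mode-decision}(i). For a general stopping time only the inequality \eqref{eq:supermartingale-dominate-stopping} is available.
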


\begin{corollary}\label{cor:max-no-free-lunch}
For any \(i\in\mathcal J\), \(
\max_{j\in\mathcal J}\mathcal R_n^{i,j}
=
\max_{j\in\mathcal J_S}\mathcal R_n^{i,j} \; \; \mathbb P\text{-a.s.}
\)  for $n\in \N^{-1} $ and 
\(
\overline Y_{t_n}^i
=
\max_{j\in\mathcal J_S}\mathcal R_n^{i,j}\,1_{(n<N)}
+\Phi^i\,1_{(n=N)} \;
\; \mathbb P\text{-a.s.}
\) for $n\in \N$.
\end{corollary}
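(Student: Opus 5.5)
The plan is to derive both identities of Corollary~\ref{cor:max-no-free-lunch} from \Cref{thm:construction-optimal-mode-decision}(i) and \Cref{lem:switching-times-no-free-lunch}. Fix \(n\in\N^{-1}\) and \(i\in\mathcal J\). By \Cref{thm:construction-optimal-mode-decision}(i), \(j_n^{i,n}\in\argmax_{j\in\mathcal J}\mathcal R_n^{i,j}\) \(\mathbb P\)-a.s. By \Cref{lem:switching-times-no-free-lunch}(ii), \(j_n^{i,n}\in\mathcal J_S\) \(\mathbb P\)-a.s. Hence
\[
\max_{j\in\mathcal J}\mathcal R_n^{i,j}=\mathcal R_n^{i,j_n^{i,n}}\le\max_{j\in\mathcal J_S}\mathcal R_n^{i,j}.
\]
The reverse inequality is trivial because \(\mathcal J_S\subset\mathcal J\), and \(\mathcal J_S\neq\varnothing\) by \Cref{lem:switching-times-no-free-lunch}(i), so the right-hand maximum is well defined. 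Since \(\mathcal J\) is finite, the exceptional null sets can be united. This proves the first identity.

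For the second identity with \(n<N\), I would show
\[
\overline Y_{t_n}^i=\max_{j\in\mathcal J}\mathcal R_n^{i,j}=\max\bigl\{\overline Y_{t_n}^i,\ \overline{\mathcal R}_n^i\bigr\},
\]
using \(\mathcal R_n^{i,i}=\overline Y_{t_n}^i\), which holds because \(l_{ii}\equiv0\). This follows from \(\overline Y_{t_n}^i\ge\overline{\mathcal R}_n^i\), which is read off from the dichotomy in \Cref{thm:construction-optimal-mode-decision}(i): \(\overline Y_{t_n}^i>\overline{\mathcal R}_n^i\) on \(\{\tau_n^i>n\}\) and \(\overline Y_{t_n}^i=\overline{\mathcal R}_n^i\) on \(\{\tau_n^i=n\}\). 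Alternatively, it follows from \eqref{eq:stopping-equality-regime-decision} together with the supermartingale/Snell envelope property of \(\overline Y^i\) dominating its reflection obstacle \(\overline{\mathcal R}^i\). Combining this with the first identity gives \(\overline Y_{t_n}^i=\max_{j\in\mathcal J_S}\mathcal R_n^{i,j}\).

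For \(n=N\), the indicator \(1_{(n<N)}\) vanishes, and \(\overline Y_{T}^i=\Phi^i\) directly from \eqref{eq:original-primal}, since no switch is admissible at terminal time; equivalently, this is \eqref{eq:stopping-equality-regime-decision} with \(\tau=N\). The only point needing care is the measure-theoretic bookkeeping, because \(\mathcal J_S\) is random. For that reason the argument is carried out \(\omega\)-wise on the full-measure intersection of the finitely many almost-sure events involved. I expect no substantive obstacle here, since the real content sits in \Cref{lem:switching-times-no-free-lunch}.
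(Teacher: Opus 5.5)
Your proof is correct, but your route to the first identity differs from the paper's. The paper argues regime by regime. For each $j\in\mathcal J_N$ it takes $q:=j_n^{j,n}$, which lies in $\mathcal J_S$ by \Cref{lem:switching-times-no-free-lunch}(ii). It then uses $\overline Y_{t_n}^j=\mathcal R_n^{j,q}$ together with the triangular condition to get $\mathcal R_n^{i,j}\le\mathcal R_n^{i,q}$, and finally invokes $\overline Y_{t_n}^i=\max_{j\in\mathcal J}\mathcal R_n^{i,j}$.

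You instead apply \Cref{lem:switching-times-no-free-lunch}(ii) only to the starting regime $i$, and combine it with the argmax property of $j_n^{i,n}$ from \Cref{thm:construction-optimal-mode-decision}(i). Your version is shorter and does not invoke the triangular condition again, since it is already built into the lemma. It also exhibits an explicit maximizer in $\mathcal J_S$, namely $j_n^{i,n}$. The paper's version instead yields a domination that is uniform in the starting regime: each excluded regime $j$ is beaten, for every $i$, by its own immediate switching target $j_n^{j,n}\in\mathcal J_S$. That is the content of the subsequent remark on restricting to non-consecutive switches.

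You also justify $\overline Y_{t_n}^i=\max_{j\in\mathcal J}\mathcal R_n^{i,j}$ via $l_{ii}\equiv0$ and $\overline Y_{t_n}^i\ge\overline{\mathcal R}_n^i$, which the paper leaves implicit. For your alternative justification, the statement to cite is the domination inequality \eqref{eq:supermartingale-dominate}, not \eqref{eq:stopping-equality-regime-decision}.
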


\begin{remark}
Thus optimal regime decisions at time \(t_n\) may be restricted to \(\mathcal J_S\), i.e., to non-consecutive switches.
\end{remark}

To prepare for strong duality, we also write the surely expansion from \Cref{thm:expansion-surely-optimal} explicitly. Setting \(j_{n-1}^{i,n}:=i\), we have, for \(i\in\mathcal J\) and \(n\in\overline N\),
\[
\overline Y_{t_n}^i
=
\sum_{k=n}^{N-1}
\Big(
\int_{t_k}^{t_{k+1}} f^{j_k^{i,n}}(s)\,ds
-
l_{j_{k-1}^{i,n}j_k^{i,n}}(t_k)
-
\Delta\overline M_{t_k}^{\,j_k^{i,n}}
\Big)
+\Phi^{j_N^{i,n}},
\quad \mathbb P\text{-a.s.}
\]

\begin{theorem}[Dual dynamic programming principle]\label{thm:DPP-upper-surely-optimal}
For any \(i\in\mathcal J\), \(n\in\N^{-1}\), and \(M=(M^j)_{j\in\mathcal J}\in(\mathcal M_{n,N})^J\), we have
\(
\widetilde U_N^i(M)=\Phi^i=\overline Y_{t_N}^i,
\)
and, \(\mathbb P\)-a.s.,
\begin{align}
\overline Y_{t_n}^i
&=
\int_{t_n}^{t_{n+1}} f^{j_n^{i,n}}(s)\,ds
-l_{i\,j_n^{i,n}}(t_n)
-\Delta\overline M_{t_n}^{\,j_n^{i,n}}
+\overline Y_{t_{n+1}}^{\,j_n^{i,n}}
\notag\\
&=
\max_{j\in\mathcal J}
\Big[
\int_{t_n}^{t_{n+1}} f^j(s)\,ds
-l_{ij}(t_n)
-\Delta\overline M_{t_n}^{\,j}
+\overline Y_{t_{n+1}}^{\,j}
\Big], \label{eq:first-second-equality-Y}\\
\widetilde U_n^i(M)
&=
\max_{j\in\mathcal J}
\Big[
\int_{t_n}^{t_{n+1}} f^j(s)\,ds
-l_{ij}(t_n)
-\Delta M_{t_n}^{\,j}
+\widetilde U_{n+1}^j(M)
\Big].
\label{eq:equality-dual-upper-bound}
\end{align}
\end{theorem}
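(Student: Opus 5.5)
The plan has three steps: the terminal identity, the recursion \eqref{eq:equality-dual-upper-bound} for \(\widetilde U\), and then the two equalities for \(\overline Y\) in \eqref{eq:first-second-equality-Y}. The case \(n=N\) is immediate. Since \(\mathcal J_N=\{(i)\}\), the sum in \(\widetilde U_N^{i,(i)}(M)\) is empty and \(l_{ii}\equiv0\). Hence \(\widetilde U_N^i(M)=\Phi^i\), and \(\overline Y_{t_N}^i=\Phi^i\) by the definition \eqref{eq:original-primal}.

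\textbf{Recursion for \(\widetilde U\).} This is a purely pathwise decomposition of the maximum in \eqref{eq:upper_bound_operator_equiv}. A path \((j_m)_{m=n-1}^N\in\mathcal J_n^i\) is the same as a choice of \(j_n=j\in\mathcal J\) followed by a tail \((j_m)_{m=n}^N\in\mathcal J_{n+1}^{j}\), i.e. a tail with auxiliary index \(j\) at time \(n\).

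For \(n\le N-2\), split the summand at \(m=n\). The \(m=n\) term is \(\int_{t_n}^{t_{n+1}}f^j\,ds-l_{ij}(t_n)-\Delta M^j_{t_n}\), and it depends only on \(j\). The remaining terms \(m=n+1,\dots,N-1\), together with \(\Phi^{j_N}\), are exactly the bracket defining \(\widetilde U_{n+1}^{j}(M)\). So maximizing first over the tail and then over \(j\) gives \eqref{eq:equality-dual-upper-bound}.

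The edge case \(n=N-1\) must be checked separately. Here \(\mathcal J_{N-1}\) forces \(j_N=j_{N-1}=j\), so the tail contributes \(\Phi^{j}=\widetilde U_N^{j}(M)\), which is consistent with the recursion. I will also note that \(M\in(\mathcal M_{n,N})^J\) restricts to \((\mathcal M_{n+1,N})^J\), so \(\widetilde U_{n+1}^j(M)\) is well defined.

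\textbf{First equality for \(\overline Y\).} The surely expansion (\Cref{thm:expansion-surely-optimal}) gives \(\overline Y_{t_n}^i=\widetilde U_n^{i,j^{i,n}}(\overline M)\). Split the explicit sum at \(k=n\) and use the DPP property (ii) of \Cref{thm:construction-optimal-mode-decision}, namely \(j_k^{i,n}=j_k^{j_n^{i,n},n+1}\) for \(k\ge n+1\). Then the remaining sum, with auxiliary index \(j_n^{i,n}\), is exactly the explicit expansion of \(\overline Y_{t_{n+1}}^{j_n^{i,n}}\) at time \(n+1\). Applying \Cref{thm:expansion-surely-optimal} again at time \(n+1\) with initial regime \(j_n^{i,n}\) is legitimate because it holds a.s. for each of the finitely many regimes.

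\textbf{Second equality for \(\overline Y\): the main obstacle.} The left-hand side is attained at \(j=j_n^{i,n}\), so "\(\le\)" is immediate. For the reverse inequality I need, for every \(j\in\mathcal J\),
\[
\int_{t_n}^{t_{n+1}}f^j\,ds-l_{ij}(t_n)-\Delta\overline M^j_{t_n}+\overline Y^j_{t_{n+1}}\le \overline Y^i_{t_n}.
\]
The Doob decomposition gives \(\overline Y^j_{t_{n+1}}-\Delta\overline M^j_{t_n}=\overline Y^j_{t_n}-\Delta\overline A^j_{t_n}\). Since \(\overline A^j\) is nondecreasing, \(\Delta\overline A^j_{t_n}\ge0\). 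So it suffices to show the two-part bound
\[
\overline Y^j_{t_n}-\Delta\overline A^j_{t_n}+\int_{t_n}^{t_{n+1}}f^j\,ds\le \overline Y^j_{t_n}\quad\text{and}\quad \overline Y^j_{t_n}-l_{ij}(t_n)\le\overline Y^i_{t_n}.
\]

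The first part follows from the one-step iterated-stopping supermartingale property, \(\overline Y^j_{t_n}\ge \E_{t_n}[\int_{t_n}^{t_{n+1}}f^j\,ds+\overline Y^j_{t_{n+1}}]\), i.e. the value of continuing. This is where care is needed. In the Doob decomposition of \(\overline Y^j\) as a process, the running payoff has to be accounted for, and I must check whether \(\overline M^j\) is the Doob martingale of \(\overline Y^j+\int_0^{\cdot}f^j\). Lemma~\ref{lem:dual_iterative} with \eqref{eq:iterative_dual_upper_two_regime} suggests exactly this, since there \(\overline M\) compensates \(\int f\). With that reading, \(\Delta\overline A^j_{t_n}=\overline Y^j_{t_n}-\E_{t_n}[\int_{t_n}^{t_{n+1}} f^j\,ds+\overline Y^j_{t_{n+1}}]\ge0\), and the required bound reduces to \(\overline Y^j_{t_n}-l_{ij}(t_n)\le\overline Y^i_{t_n}\).

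That last bound holds for \(j\ne i\) because \(\overline Y^i_{t_n}\ge\overline{\mathcal R}^i_n\) (from \eqref{eq:stopping-equality-regime-decision}, or directly from Corollary~\ref{cor:max-no-free-lunch}), and trivially for \(j=i\) since \(l_{ii}\equiv0\). Summing up, each bracket is at most \(\overline Y^i_{t_n}\) a.s., the bound is attained at \(j=j_n^{i,n}\), and the maximum over finitely many \(j\) equals \(\overline Y^i_{t_n}\) a.s.
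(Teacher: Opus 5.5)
Your proposal is correct. The terminal case, the pathwise splitting of the maximum that gives \eqref{eq:equality-dual-upper-bound}, and the first equality in \eqref{eq:first-second-equality-Y} (surely expansion plus property (ii) of \Cref{thm:construction-optimal-mode-decision}) coincide with the paper's argument. The two routes differ only in the second equality.

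The paper splits $\mathcal J$ pathwise into $\mathcal J_S$ and $\mathcal J_N$. For $j\in\mathcal J_S$ it reapplies the first equality with initial regime $j$, so the bracket equals $\mathcal R_n^{i,j}$. It then uses \Cref{cor:max-no-free-lunch} (hence \Cref{lem:switching-times-no-free-lunch}) to identify $\max_{j\in\mathcal J_S}$ with $\overline Y_{t_n}^i$. Only for $j\in\mathcal J_N$ does it invoke the one-step comparison $\overline Y_{t_n}^j\ge\int_{t_n}^{t_{n+1}}f^j(s)\,ds-\Delta\overline M_{t_n}^j+\overline Y_{t_{n+1}}^j$, which it obtains pathwise from \eqref{eq:equivalent-dual} by discarding the index $m=n$ in the maximum.

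You notice that this one-step bound holds for every $j$. Combined with $\mathcal R_n^{i,j}\le\overline Y_{t_n}^i$, it caps every bracket by $\overline Y_{t_n}^i$, and attainment at $j=j_n^{i,n}$ already comes from the first equality. This removes the need for $\mathcal J_S$, \Cref{lem:switching-times-no-free-lunch} and \Cref{cor:max-no-free-lunch} in this theorem, which is a genuine simplification. The paper's route does yield the extra fact that the bracket equals $\mathcal R_n^{i,j}$ on $\mathcal J_S$, but that fact is not needed here.

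The cost of your route is that you take the one-step bound from the Doob decomposition. This forces you to settle which process $\overline M$ decomposes. Your choice, that $\overline M^j$ is the Doob martingale of $\overline Y^j+\int_0^{\cdot}f^j(s)\,ds$, is the right one. It is the reading under which \eqref{eq:equivalent-dual} is genuinely Rogers' sure optimality when $f\not\equiv0$. It also gives the exact identity $\int_{t_n}^{t_{n+1}}f^j(s)\,ds-\Delta\overline M_{t_n}^j+\overline Y_{t_{n+1}}^j=\overline Y_{t_n}^j-\Delta\overline A_{t_n}^j$, with $\Delta\overline A_{t_n}^j\ge0$ by \Cref{lem:supermartingale_envelop}.

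However, your intermediate ``two-part bound'' display keeps a separate $+\int_{t_n}^{t_{n+1}}f^j(s)\,ds$, so it belongs to the other, literal reading. Under that reading it would require $\int_{t_n}^{t_{n+1}}f^j(s)\,ds\le\Delta\overline A_{t_n}^j$ pathwise, which fails in general. Replace that display by the exact identity above. Alternatively, take the one-step inequality directly from \eqref{eq:equivalent-dual} as the paper does, which avoids identifying $\overline A$ at all.
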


For subsequent convergence analysis, we provide the following error propogation lemma. Since the proof is similar to \cite{ye2025deepmartingale}, we omit here.
\begin{lemma}[Error Propagation] \label{lem:error-propagate}
    Define the martingale difference operators by $ \xi^i_n : (\mathcal{M}_{n,N})^{J} \ni (M^i)_{i\in \mathcal{J}} \mapsto \Delta M^{i}_{t_n} $, for $ i\in \mathcal{J} $ and $ n \in \N^{-1} $. Then, for any $ M^{\circ},M^{\star} \in (\mathcal{M}_{n,N})^{J} $,
    \begin{equation}\label{eq:error_propogation}
        \begin{aligned}
            \max_{i\in \mathcal{J}} \big|\widetilde{U}^i_n(M^{\circ}) -\widetilde{U}^i_n(M^{\star}) \big|  & \le \max_{i\in \mathcal{J}} \big|\widetilde{U}^i_{n+1}(M^{\circ}) - \widetilde{U}^i_{n+1}(M^{\star}) \big| + \max_{i\in \mathcal{J}} |\xi^i_{n}(M^{\circ}) - \xi^i_{n}(M^{\star}) | \\
                & \le \max_{i\in \mathcal{J}} \big|\widetilde{U}^i_{n+1}(M^{\circ}) - \widetilde{U}^i_{n+1}(M^{\star}) \big| + \sum_{i \in \mathcal{J}} |\xi^i_{n}(M^{\circ}) - \xi^i_{n}(M^{\star}) | .
        \end{aligned}
    \end{equation}
\end{lemma}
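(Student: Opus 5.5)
The plan is to derive the first inequality in \eqref{eq:error_propogation} directly from the recursion \eqref{eq:equality-dual-upper-bound} of \Cref{thm:DPP-upper-surely-optimal}, which holds for arbitrary martingale families. The key tool is the elementary fact that a maximum over a finite index set is $1$-Lipschitz in the sup norm:
\[
\Big|\max_{j\in\mathcal J}a_j-\max_{j\in\mathcal J}b_j\Big|\le\max_{j\in\mathcal J}|a_j-b_j| .
\]
The second inequality in \eqref{eq:error_propogation} is then trivial, since a maximum of nonnegative numbers is bounded by their sum.

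Fix $n\in\N^{-1}$ and $i\in\mathcal J$. Apply \eqref{eq:equality-dual-upper-bound} to both $M^{\circ}$ and $M^{\star}$, working $\omega$-wise on the full-measure set where both recursions hold. For each $j$ set
\[
a_j:=\int_{t_n}^{t_{n+1}}f^j(s)\,ds-l_{ij}(t_n)-\xi^j_n(M^{\circ})+\widetilde U^j_{n+1}(M^{\circ}),
\]
and define $b_j$ in the same way with $M^{\star}$ in place of $M^{\circ}$. The running payoff and switching cost terms do not depend on the martingale, so they cancel in the difference:
\[
a_j-b_j=-\big(\xi^j_n(M^{\circ})-\xi^j_n(M^{\star})\big)+\big(\widetilde U^j_{n+1}(M^{\circ})-\widetilde U^j_{n+1}(M^{\star})\big).
\]

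By the Lipschitz property of the maximum and the triangle inequality,
\[
|\widetilde U^i_n(M^{\circ})-\widetilde U^i_n(M^{\star})|\le\max_j|\widetilde U^j_{n+1}(M^{\circ})-\widetilde U^j_{n+1}(M^{\star})|+\max_j|\xi^j_n(M^{\circ})-\xi^j_n(M^{\star})|.
\]
The right-hand side does not depend on $i$, so taking the maximum over $i\in\mathcal J$ on the left gives the first inequality.

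There is no real obstacle here. The only care needed is that \eqref{eq:equality-dual-upper-bound} is stated for $M\in(\mathcal M_{n,N})^J$ with this same $n$. The time-$(n+1)$ quantities $\widetilde U^j_{n+1}(M)$ are evaluated with the same martingales restricted to $\mathbb F_{n+1}$; this restriction is still a martingale, and $\widetilde U^j_{n+1}$ depends only on its increments, so the recursion applies. The inequalities hold $\P$-a.s., as a countable (in fact finite) intersection of full-measure sets.
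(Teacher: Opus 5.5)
Your proof is correct and follows the argument the paper intends. The paper omits the proof and defers to the stopping case in \cite{ye2025deepmartingale}; the intended route is the one-step dual recursion \eqref{eq:equality-dual-upper-bound}, combined with the $1$-Lipschitz property of a finite maximum, and then $\max\le\sum$ for the second inequality. Your remark about restricting $M^{\circ},M^{\star}$ to $\mathbb F_{n+1}$ handles the only bookkeeping point.
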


\subsection{Strong duality and computable upper bound}\label{subsec:strong_dual}

We next state strong duality for the regime-decision formulation; in particular, the Doob martingales \(\overline M\) are minimal martingale penalties.

\begin{theorem}[Strong duality, surely optimal]\label{thm:stong_duality}
For any \(i\in\mathcal J\) and \(n\in\overline N\),
\begin{equation}\label{eq:surely-optimal-strong-dual}
\overline Y_{t_n}^i
=
\widetilde U_n^{i,j^{i,n}}(\overline M)
=
\widetilde U_n^i(\overline M)
=
\mathbb E_{t_n}\!\big[\widetilde U_n^i(\overline M)\big]
=
\essinf_{M\in(\mathcal M_{n,N})^J}\mathbb E_{t_n}\!\big[\widetilde U_n^i(M)\big],
\; \; \mathbb P\text{-a.s.}  \tag{\textbf{D}}
\end{equation}
\end{theorem}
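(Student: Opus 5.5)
The proof will establish the chain in (\textbf{D}) from left to right and then close it with weak duality. The first equality, \(\overline Y_{t_n}^i=\widetilde U_n^{i,j^{i,n}}(\overline M)\), is exactly \Cref{thm:expansion-surely-optimal}, so nothing new is needed there.

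For the second equality, \(\widetilde U_n^i(\overline M)=\overline Y_{t_n}^i\), I will use a backward induction on \(n\) based on the dual dynamic programming principle, \Cref{thm:DPP-upper-surely-optimal}. At \(n=N\) we have \(\widetilde U_N^i(\overline M)=\Phi^i=\overline Y_{t_N}^i\) for every \(i\). Assume now that \(\widetilde U_{n+1}^j(\overline M)=\overline Y_{t_{n+1}}^j\) a.s.\ for all \(j\in\mathcal J\). Substituting into \eqref{eq:equality-dual-upper-bound} with \(M=\overline M\) gives
\[
\widetilde U_n^i(\overline M)=\max_{j\in\mathcal J}\Big[\int_{t_n}^{t_{n+1}} f^j(s)\,ds-l_{ij}(t_n)-\Delta\overline M_{t_n}^{j}+\overline Y_{t_{n+1}}^{j}\Big].
\]
By the second line of \eqref{eq:first-second-equality-Y}, the right-hand side equals \(\overline Y_{t_n}^i\). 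Since \(\mathcal J\) is finite, the a.s.\ statements survive taking the maximum, and the induction closes. This is the step I expect to carry the real content, but the heavy lifting (the pathwise equality, which relies on the no-consecutive-switch results \Cref{lem:no-free-lunch-dual} and \Cref{lem:switching-times-no-free-lunch}) is already contained in \Cref{thm:DPP-upper-surely-optimal}. The only thing to check is that \eqref{eq:equality-dual-upper-bound} holds for \(\overline M\) restricted to \(\mathbb F_n\), i.e.\ \(\overline M\in(\mathcal M_{n,N})^J\). This is immediate, because the restriction of an \(\mathbb F^N\)-martingale to the dates \(t_n,\dots,t_N\) is an \(\mathbb F_n\)-martingale.

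For the third equality, note that \(\widetilde U_n^i(\overline M)=\overline Y_{t_n}^i\) is \(\mathcal F_{t_n}\)-measurable and integrable, so \(\mathbb E_{t_n}[\widetilde U_n^i(\overline M)]=\widetilde U_n^i(\overline M)\).

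For the last equality, since \(\overline M\in(\mathcal M_{n,N})^J\), we get
\[
\essinf_{M}\mathbb E_{t_n}\big[\widetilde U_n^i(M)\big]\le \mathbb E_{t_n}\big[\widetilde U_n^i(\overline M)\big]=\overline Y_{t_n}^i .
\]
The reverse inequality \(\overline Y_{t_n}^i\le\essinf_M\mathbb E_{t_n}[\widetilde U_n^i(M)]\) is the weak duality \Cref{lem:weak-duality}. Combining the two gives equality a.s., and this also shows that the infimum is attained at the Doob martingales \(\overline M\).
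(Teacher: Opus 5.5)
Your proposal is correct and follows essentially the same route as the paper: the surely expansion gives the first equality, backward induction through the dual DPP \eqref{eq:equality-dual-upper-bound} combined with \eqref{eq:first-second-equality-Y} gives \(\widetilde U_n^i(\overline M)=\overline Y_{t_n}^i\), measurability gives the conditional-expectation identity, and weak duality plus the choice \(M=\overline M\) closes the chain. The only cosmetic difference is that the paper inducts on the one-sided bound \(\widetilde U_{n+1}^j(\overline M)\le \overline Y_{t_{n+1}}^j\) and takes the reverse inequality from the expansion \(\overline Y_{t_n}^i=\widetilde U_n^{i,j^{i,n}}(\overline M)\le\widetilde U_n^i(\overline M)\), whereas you carry equality directly through the induction; both work equally well.
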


\subsection{Primal dynamic programming principle and auxiliary lower bound construction}

Before introducing \textit{DeepMartingales}, we record the primal dynamic programming principle and the optimality of the candidates $j^{i,n}$, both of which will be used in the expressivity analysis and in the numerical section, to construct feasible lower bounds for comparison.

\begin{proposition}[Primal dynamic programing principle and optimality]\label{pro:primal_DPP}
For \(i\in\mathcal J\), \(n\in\N^{-1}\), \(d^n=(d_m)_{m=n}^N \in \mathcal D_n^i\), write
\(
d^{n+1}:=(d_m)_{m=n+1}^N  .
\)
Then \(L_N^i(\cdot)\equiv \Phi^i\), 
\begin{align}
L_n^i(d^n)
&=\E_{t_n}\!\Big[\int_{t_n}^{t_{n+1}} f^{d_n}(s)\,ds + L_{n+1}^{d_n}(d^{n+1}) - l_{i d_n}(t_n)\Big], \label{eq:lower_bound_DPP}\\
\overline Y_{t_n}^i
&=\max_{j\in\mathcal J}\E_{t_n}\!\Big[\int_{t_n}^{t_{n+1}} f^j(s)\,ds + \overline Y_{t_{n+1}}^j - l_{ij}(t_n)\Big], \quad \P\text{-a.s.} \label{eq:primal_DPP_value_process}
\end{align}
Moreover,
\begin{equation}\label{eq:primal_optimality}
\overline Y_{t_n}^i=L_n^i(j^{i,n}), \quad i \in \mathcal J,\  n\in\overline N,\  \P\text{-a.s.}
\end{equation}

\end{proposition}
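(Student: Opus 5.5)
The plan has three parts, one for each claim of the proposition.

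\emph{Recursion \eqref{eq:lower_bound_DPP}.} Fix \(d^n\in\mathcal D_n^i\). By definition, \(d^{n+1}\) is \(\mathcal J\)-valued, adapted and satisfies \(d_N=d_{N-1}\). So \(d^{n+1}\) is an admissible regime-decision on \(\mathbb F_{n+1}\), but with the random initial regime \(d_n\) in place of a deterministic \(i\). To make sense of \(L_{n+1}^{d_n}(d^{n+1})\), we read it as \(\sum_{j\in\mathcal J}1_{\{d_n=j\}}L_{n+1}^j(d^{n+1})\). Since \(d_n\) is \(\mathcal F_{t_n}\subset\mathcal F_{t_{n+1}}\)-measurable, we may pull the indicators inside the conditional expectation \(\E_{t_{n+1}}\), which gives
\[
L_{n+1}^{d_n}(d^{n+1})=\E_{t_{n+1}}\Big[\sum_{m=n+1}^{N-1}\Big(\int_{t_m}^{t_{m+1}}f^{d_m}(s)\,ds-l_{d_md_{m+1}}(t_{m+1})1_{\{m+1<N\}}\Big)-l_{d_nd_{n+1}}(t_{n+1})+\Phi^{d_N}\Big].
\]
Now take \(\E_{t_n}\) of both sides, apply the tower property, and add the \(m=n\) running term and the cost \(-l_{id_n}(t_n)\). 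This reproduces \(L_n^i(d^n)\) term by term. The term \(-l_{d_nd_{n+1}}(t_{n+1})\) is exactly the \(m=n\) switching cost, with the indicator \(1_{\{n+1<N\}}\); when \(n+1=N\) we have \(L_N\equiv\Phi\), and the cost vanishes because \(d_N=d_{N-1}\) and \(l_{jj}=0\). Integrability follows from the standing \(L^1\) assumptions. This step is bookkeeping.

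\emph{Value recursion \eqref{eq:primal_DPP_value_process}.} We take conditional expectations in the dual DPP \eqref{eq:first-second-equality-Y} of \Cref{thm:DPP-upper-surely-optimal}, rather than redoing a primal argument.
\begin{itemize}
\item \emph{The \(\le\) direction.} The first line of \eqref{eq:first-second-equality-Y} expresses \(\overline Y_{t_n}^i\) along the \(\mathcal F_{t_n}\)-measurable choice \(j_n^{i,n}\). Take \(\E_{t_n}\), decompose on the events \(\{j_n^{i,n}=j\}\), and use \(\E_{t_n}[\Delta\overline M_{t_n}^j]=0\). This gives
\[
\overline Y_{t_n}^i=\sum_j1_{\{j_n^{i,n}=j\}}\E_{t_n}\Big[\int_{t_n}^{t_{n+1}}f^j\,ds-l_{ij}(t_n)+\overline Y_{t_{n+1}}^j\Big]\le \max_j \E_{t_n}[\cdots].
\]
\item \emph{The \(\ge\) direction.} The second line of \eqref{eq:first-second-equality-Y} gives, for each fixed \(j\), the pathwise inequality \(\overline Y_{t_n}^i\ge\int_{t_n}^{t_{n+1}} f^j\,ds-l_{ij}(t_n)-\Delta\overline M^j_{t_n}+\overline Y^j_{t_{n+1}}\). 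Take \(\E_{t_n}\); the left side is \(\mathcal F_{t_n}\)-measurable and the martingale increment drops out.
\end{itemize}

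\emph{Optimality \eqref{eq:primal_optimality}.} We argue by backward induction on \(n\). The base case \(n=N\) is trivial, since both sides equal \(\Phi^i\). For \(n<N\), set \(d^n=j^{i,n}\), which belongs to \(\mathcal D_n^i\) by \Cref{thm:construction-optimal-mode-decision}. By the DPP property (ii) of that theorem, \(d^{n+1}=j^{j_n^{i,n},n+1}\). Hence, on each event \(\{j_n^{i,n}=j\}\), the induction hypothesis gives \(L_{n+1}^{d_n}(d^{n+1})=L_{n+1}^j(j^{j,n+1})=\overline Y_{t_{n+1}}^j\), so \(L_{n+1}^{d_n}(d^{n+1})=\overline Y_{t_{n+1}}^{j_n^{i,n}}\). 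Substituting into \eqref{eq:lower_bound_DPP} and comparing with the conditional-expectation form of the first line of \eqref{eq:first-second-equality-Y} (as computed in the \(\le\) direction above) yields \(L_n^i(j^{i,n})=\overline Y_{t_n}^i\).

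\emph{Main obstacle.} The delicate point is handling the random-index objects \(L_{n+1}^{d_n}\) and \(\overline Y^{j_n^{i,n}}_{t_{n+1}}\) rigorously. This requires the measurable decomposition over the finitely many values of \(\mathcal J\), checking that \(j^{j,n+1}\in\mathcal D^j_{n+1}\) for every \(j\), and verifying that the induction hypothesis holds simultaneously for all \(j\) on the corresponding events. Everything else reduces to the tower property and the zero-mean martingale increments.
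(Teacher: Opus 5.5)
Your proof is correct, but it reaches the two substantive claims by a different route from the paper.

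\textbf{Optimality.} The paper proves \eqref{eq:primal_optimality} directly, without induction. It conditions the surely expansion $\overline Y_{t_n}^i=\widetilde U_n^{i,j^{i,n}}(\overline M)$ of \Cref{thm:expansion-surely-optimal} on $\mathcal F_{t_n}$. Since $j^{i,n}$ is adapted, every increment $\Delta\overline M^{j_m^{i,n}}_{t_m}$ has zero conditional mean, which gives $L_n^i(j^{i,n})$ in one line.

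\textbf{Value recursion.} The paper then proves \eqref{eq:primal_DPP_value_process} by a purely primal argument. The bound $\le$ comes from the esssup representation \eqref{eq:original-primal-mode-choose}: applied at $n+1$, it gives $L_{n+1}^j(d^{n+1})\le\overline Y_{t_{n+1}}^j$ on $\{d_n=j\}$, which is then fed into \eqref{eq:lower_bound_DPP}. The bound $\ge$ uses the concatenated decision $(j,j^{j,n+1})\in\mathcal D_n^i$ together with optimality at $n+1$.

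You instead condition both lines of the pathwise dual DPP \eqref{eq:first-second-equality-Y}. This bypasses \eqref{eq:original-primal-mode-choose} entirely. It also shows that the primal DPP is just the conditional expectation of the dual one, with $j_n^{i,n}$ an optimal one-step choice. The cost is that you rely on the second equality of \Cref{thm:DPP-upper-surely-optimal}, whose proof needed \Cref{cor:max-no-free-lunch} and the no-consecutive-switch lemmas. The paper's route needs only the surely expansion and the primal representation.

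\textbf{Your induction for optimality.} Your backward induction for \eqref{eq:primal_optimality} is valid: the locality of $\E_{t_{n+1}}$ on $\mathcal F_{t_{n+1}}$-events handles the random index. It is, however, roundabout. The first line of \eqref{eq:first-second-equality-Y} is itself the surely expansion split at $n+1$, so conditioning the full expansion directly, as the paper does, removes the ``main obstacle'' you identify.
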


\begin{remark}
    By verfication, \eqref{eq:lower_bound_DPP} is equivalent to
    \begin{equation}\label{eq:primal_DPP_indicator}
        L_n^i(d^n)
        =\E_{t_n}\!\Big[\sum_{j=1}^J 1_{\{d_n=j\}}
        \Big(\int_{t_n}^{t_{n+1}} f^j(s)\,ds + L_{n+1}^j(d^{n+1}) - l_{ij}(t_n)\Big)\Big].
    \end{equation}

\end{remark}

\section{DeepMartingale solver}\label{sec:DeepSwitchMart}

We adapt \textit{DeepMartingale} \cite{ye2025deepmartingale} to the dual switching problem in Brownian Markovian setting. Let \((\Omega,\mathcal F,\mathbb F,\mathbb P)\) support a \(d\)-dimensional Brownian motion \(W=(W^1,\ldots,W^d)^\top\), and let \(\mathbb F\) be its augmented filtration. We consider $X$  the unique strong solution of the following It\^o diffusion
\begin{equation}\label{eq:SDE}
dX_t=\mu(t,X_t)\,dt+\sigma(t,X_t)\,dW_t,\quad X_0=x,
\end{equation}
where \(\mu:[0,T]\times\mathbb R^d\to\mathbb R^d\) and \(\sigma:[0,T]\times\mathbb R^d\to\mathbb R^{d\times d}\) are Lipschitz in \(x\) and \(1/2\)-H\"older in \(t\). Regime-dependent dynamics can be handled by state augmentation, so we restrict to a common state process \(X\).

We consider the following \textbf{Markovian structure}: for \(i,j\in\mathcal J\), the functions
\(
f^i:[0,T]\times\mathbb R^d\to\mathbb R,\;
\Phi^i:\mathbb R^d\to\mathbb R,\;
l_{ij}:[0,T]\times\mathbb R^d\to\mathbb R
\)
are Borel measurable, satisfy standard polynomial-growth conditions, and satisfy
\(
l_{ii}(t,x)\equiv0,\;
l_{ij}(t,x)+l_{jk}(t,x)>l_{ik}(t,x),
\;
t\in[0,T],\ \text{a.e. }x,\  i\neq j,\ j\neq k.
\)
Then the switching values are well defined, admit the Markovian representation
\(
\overline Y_{t_n}^i=V_n^i(X_{t_n}),\; i\in\mathcal J, \ n\in\overline N,
\)
for measurable \(V_n^i:\mathbb R^d\to\mathbb R\), and satisfy \(\overline Y_{t_n}^i\in L^2(\mathcal F_{t_n};\mathbb R)\); see \cite[Lemma~B.1]{ye2025deepmartingale}.

\vspace{0.3em}
\paragraph{Martingale Discretization}
By the martingale representation theorem, the Doob martingales \(\overline M=(\overline M^i)_{i\in\mathcal J}\) satisfy
\(
\overline M_{t_n}^i=\int_0^{t_n}\overline Z_s^i\cdot dW_s,
\;
\overline Z^i\in\mathbb L^2(\mathbb R^d).
\)
Following \cite{belome09,ye2025deepmartingale}, partition each \([t_n,t_{n+1}]\), \(n\in\N^{-1}\), into \(K \in \mathbb{N}_{+} \) uniform subgrid
\(
t_n=t_0^n<\cdots<t_K^n=t_{n+1},\;
\Delta t:=t_{k+1}^n-t_k^n= {T}/{NK},
\)
and write \(\Delta W_{t_k^n}:=W_{t_{k+1}^n}-W_{t_k^n}\). Define
\begin{equation}\label{eq_Z}
\hat Z_{t_k^n}^{i;K}
:=\frac1{\Delta t}\E_{t_k^n}\!\big[\overline Y_{t_{n+1}}^i  \Delta W_{t_k^n}\big],
\quad
\hat M_{t_n}^{i;K}
:=\sum_{m=0}^{n-1}\sum_{k=0}^{K-1}\hat Z_{t_k^m}^{i;K}\cdot\Delta W_{t_k^m},
\end{equation}
for \(k\in \K := \{0,\dots,K-1\}\), \(n\in\overline N^{-1} \). Then \(\hat M^K:=(\hat M^{i;K})_{i\in\mathcal J}\in(\mathcal M_N)^J\).

\vspace{0.3em}
\paragraph{Pure Dual Backward Minimization}
Let
\(
\mathcal P_n:=\{\xi\in\mathcal F_{t_{n+1}}: \E_{t_n}[\xi]=0\}.
\)
For \(\tilde\xi_n=(\xi_n^i)_{i\in\mathcal J}\in(\mathcal P_n)^J\) and \(\widetilde M^{n+1}\in(\mathcal M_{n+1,N})^J\), let
\[
\widetilde M^n:= \tilde\xi_n + \widetilde M^{n+1} \in(\mathcal M_{n,N})^J,
\quad
\widetilde U_n^i(\tilde\xi_n;\widetilde M^{n+1}):=\widetilde U_n^i(\widetilde M^n) .
\]
If \(\eta_n^i\in\mathcal F_{t_n}\) satisfies \(\eta_n^i\le \overline Y_{t_n}^i \; \; \P \)-a.s., then for any \(M\in(\mathcal M_{n,N})^J\),
\(
\E\big|\widetilde U_n^i(M)-\eta_n^i\big|^2
\ge
\E\big|\overline Y_{t_n}^i-\eta_n^i\big|^2,
\)
with equality at \(M=\overline M\). Motivated by \Cref{thm:DPP-upper-surely-optimal}, we therefore solve the dual problem backwardly by minimizing either the dual upper bound itself or its \(L^2\)-surrogate.

\begin{pr}[Pure dual backward minimization]\label{pb:backward_minimization}
Fix \(n\in\N^{-1}\), \(i\in\mathcal J\), and suppose \(\tilde\xi_{n+1},\ldots,\tilde\xi_{N-1}\) have been determined. Choose \(\tilde\xi_n\in(\mathcal P_n)^J\) by solving
\begin{align}
\tilde\xi_n
&\in \arginf_{\xi_n\in(\mathcal P_n)^J}
\E\big[\widetilde U_n^i(\xi_n;\widetilde M^{n+1})\big],
&&\text{(upper-bound loss)}, \tag{D1}\label{eq_optim}\\
\tilde\xi_n
&\in \arginf_{\xi_n\in(\mathcal P_n)^J}
\E\big|\widetilde U_n^i(\xi_n;\widetilde M^{n+1})-\eta_n^i\big|^2,
&& \text{(\(L^2\)-surrogate loss)}. \tag{D2}\label{eq_optim_second_moment}
\end{align}
\end{pr}

\begin{remark}\label{remark:pure_dual_minimization}
The Doob martingales \(\overline M\) solve both \eqref{eq_optim} and \eqref{eq_optim_second_moment} for every \(i\in\mathcal J\). In practice, \eqref{eq_optim} is typically slightly tighter but less stable, whereas \eqref{eq_optim_second_moment} is more stable and also performs better for the hedging application in Section~\ref{subsec:delta_deephedge}. The lower bound \(\eta_n^i\) may be obtained analytically when simple lower bounds for model primitives \(f^i\), \(-l_{ij}\), and \(\Phi^i\) are available, or tuned as a hyperparameter. For some simple cases, $ \eta^i_n \equiv 0 $ is the most convenient choice.  
\end{remark}

\vspace{0.3em}
\paragraph{DeepMartingale Parametrization}
Let \(\Theta:=\bigcup_{m\ge1}\mathbb R^m\). For each \(n\in\N^{-1}\) and \(i\in\mathcal J\), let \(z_n^{i,\theta_n^i}:\mathbb R^{1+d}\to\mathbb R^d\) be a feedforward neural network with parameter \(\theta_n^i\in\Theta\),
\[
z_n^{i,\theta_n^i}
=
a_{I+1}^{\theta_n^i}\circ \varphi_{q_I}\circ a_I^{\theta_n^i}\circ \cdots \circ \varphi_{q_1}\circ a_1^{\theta_n^i},
\]
where \(I\ge1\), \(q_1,\dots,q_I\in\mathbb N_+\), the \(a_\ell^{\theta_n^i}\) are affine maps, and \(\varphi_q\) denotes the component-wise bounded, non-constant activation \(\varphi\). Define the \textit{DeepMartingales} \(M^{\theta;K}=(M^{i,\theta^i;K})_{i\in\mathcal J}\) by
\begin{equation}\label{deep_mart}
\xi_n^{i,\theta_n^i;K}
:=\sum_{k=0}^{K-1} z_n^{i,\theta_n^i}(t_k^n,X_{t_k^n})\cdot \Delta W_{t_k^n}\,1_{\{n<N\}},
\; \; \;
M_{t_n}^{i,\theta^i;K}
:=\sum_{m=0}^{n-1}\xi_m^{i,\theta_m^i;K},
\; 
n\in\overline N,
\end{equation}
where \(\theta^i:=(\theta_n^i)_{n=0}^{N-1}\in\Theta^N\) and \(\theta:=(\theta^i)_{i\in\mathcal J}\in\Theta^{N\times J}\). Note that \(\xi_n^{i,\theta_n^i;K}\in\mathcal P_n\).

\subsection{Convergence under bounded activation function}

The next two convergence result follows from \cite[Theorems~4.6--4.7]{ye2025deepmartingale} together with \Cref{lem:error-propagate}, and thus we omit the proofs.

\begin{theorem}\label{thm:theta_approx}
For any \(\varepsilon>0\), there exists a family of DeepMartingales \(M^{\theta_\varepsilon;K}\) such that, for each \(n\in\overline N\),
\(
\E\big[\max_{i\in\mathcal J}
\big|\widetilde U_n^i(\hat M^K)-\widetilde U_n^i(M^{\theta_\varepsilon;K})\big|^2\big]
\le (N-n)J\varepsilon.
\)
\end{theorem}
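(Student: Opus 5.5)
The plan is to reduce the statement to a one-step approximation of the discretized integrands $\hat Z^{i;K}$ by networks, and then propagate errors backward with \Cref{lem:error-propagate}.

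\textbf{Step 1: backward telescoping.} Apply \Cref{lem:error-propagate} with $M^{\circ}=\hat M^K$ and $M^{\star}=M^{\theta_\varepsilon;K}$. Both are elements of $(\mathcal M_N)^J$; restricting them to $\mathbb F_n$ gives elements of $(\mathcal M_{n,N})^J$. Iterate the second inequality in \eqref{eq:error_propogation} from $n$ up to $N$, using $\widetilde U_N^i(\cdot)=\Phi^i$ for both martingales. This gives pathwise
\[
\max_{i\in\mathcal J}\big|\widetilde U_n^i(\hat M^K)-\widetilde U_n^i(M^{\theta_\varepsilon;K})\big|
\le \sum_{m=n}^{N-1}\sum_{i\in\mathcal J}\big|\xi_m^i(\hat M^K)-\xi_m^{i,\theta_m^i;K}\big|.
\]
Squaring and applying Cauchy--Schwarz over the $(N-n)J$ summands yields
\[
\E\Big[\max_{i\in\mathcal J}|\cdot|^2\Big]\le (N-n)J\sum_{m,i}\E\big|\xi_m^i(\hat M^K)-\xi_m^{i,\theta_m^i;K}\big|^2 .
\]
To land exactly on $(N-n)J\varepsilon$, it therefore suffices to make each one-step error at most $\varepsilon/((N-n)J)$. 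A cleaner target is $\le \varepsilon/(NJ)$ for every $(m,i)$, which works uniformly in $n$ because $N-n\le N$.

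\textbf{Step 2: reduction to $L^2$ approximation of $z$.} By It\^o isometry on the subgrid, and since the integrands are $\mathcal F_{t_k^m}$-measurable and independent of $\Delta W_{t_k^m}$,
\[
\E\big|\xi_m^i(\hat M^K)-\xi_m^{i,\theta_m^i;K}\big|^2=\Delta t\sum_{k=0}^{K-1}\E\big\|\hat Z^{i;K}_{t_k^m}-z_m^{i,\theta_m^i}(t_k^m,X_{t_k^m})\big\|^2 .
\]
By the Markov property, $\overline Y^i_{t_{m+1}}=V^i_{m+1}(X_{t_{m+1}})$, so $\hat Z^{i;K}_{t_k^m}=g^i_{m}(t_k^m,X_{t_k^m})$ for a measurable function $g^i_m$. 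Moreover $g^i_m\in L^2$ with respect to the laws of $(t_k^m,X_{t_k^m})$, by Cauchy--Schwarz and $\overline Y^i\in L^2$.

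\textbf{Step 3: universal approximation (the main obstacle).} Let $\rho_m:=\frac1K\sum_k\delta_{t_k^m}\otimes\mathrm{Law}(X_{t_k^m})$, a finite Borel measure on $\R^{1+d}$. We need a single network $z_m^{i,\theta}$ with $\|g^i_m-z_m^{i,\theta}\|_{2,\rho_m}^2$ as small as desired. This is provided by Hornik's universal approximation theorem in $L^2(\rho)$ for bounded non-constant activations, applied componentwise in $\R^d$ and stacked in parallel. This is exactly the content of \cite[Theorems~4.6--4.7]{ye2025deepmartingale}, which I would invoke directly. Choosing each network so that
\[
\frac{T}{N}\,\|g_m^i-z_m^{i,\theta}\|^2_{2,\rho_m}\le \frac{\varepsilon}{NJ}
\]
and combining with Steps 1--2 completes the proof. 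The only care needed is that the measure is finite and the target lies in $L^2(\rho_m)$, which Step 2 ensures.
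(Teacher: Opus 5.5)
Your proposal is correct and follows the route the paper indicates. You iterate \Cref{lem:error-propagate} down from $\widetilde U_N^i=\Phi^i$, use It\^o isometry on the subgrid to reduce each one-step error to an $L^2(\rho_m)$ approximation of the Markovian integrand $\hat Z^{i;K}$, and close with the universal approximation results of \cite[Theorems~4.6--4.7]{ye2025deepmartingale}. Your per-step target $\varepsilon/(NJ)$, combined with Cauchy--Schwarz over the $(N-n)J$ summands, gives the stated bound $(N-n)J\varepsilon$ uniformly in $n$ for a single family of networks.
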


Hence the deep upper bounds are asymptotically tight.

\begin{corollary}\label{coro:tight_upper}
For any \(n\in\overline N\), \(i\in\mathcal J\),

\emph{(i)}.
\(
\E[\overline Y_{t_n}^i]
=
\lim_{K\to\infty}\inf_{\theta\in\Theta^{N\times J}}
\E\big[\widetilde U_n^i(M^{\theta;K})\big] ;
\)

\emph{(ii)}.
for any \(\eta_n^i\in\mathcal F_{t_n}\),
\(
\E\big|\overline Y_{t_n}^i-\eta_n^i\big|^2
=
\lim_{K\to\infty}\inf_{\theta\in\Theta^{N\times J}}
\E\big|\widetilde U_n^i(M^{\theta;K})-\eta_n^i\big|^2.
\)
\end{corollary}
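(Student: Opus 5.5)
The plan is a sandwich argument: an easy lower bound for each limit, and an upper bound obtained by combining \Cref{thm:theta_approx} with convergence of the discretized Doob martingales \(\hat M^K\) to \(\overline M\).

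\textbf{Lower bounds.} Every DeepMartingale \(M^{\theta;K}\) belongs to \((\mathcal M_{n,N})^J\), since each increment \(\xi_m^{i,\theta_m^i;K}\) lies in \(\mathcal P_m\). Weak duality (\Cref{lem:weak-duality}), or equivalently \Cref{thm:stong_duality} after taking expectations, then gives
\[
\E[\widetilde U_n^i(M^{\theta;K})]\ge \E[\overline Y_{t_n}^i]
\]
for every \(\theta\) and every \(K\).

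For (ii) the direction "\(\ge\)" needs an ordering, because the paper's inequality \(\E|\widetilde U_n^i(M)-\eta_n^i|^2\ge\E|\overline Y_{t_n}^i-\eta_n^i|^2\) was stated under \(\eta_n^i\le\overline Y_{t_n}^i\). In general I would use the conditional version of weak duality, \(\E_{t_n}[\widetilde U_n^i(M)]\ge \overline Y_{t_n}^i\). Then
\[
\E|\widetilde U_n^i(M)-\eta_n^i|^2 \ge \E\big|\E_{t_n}[\widetilde U_n^i(M)]-\eta_n^i\big|^2,
\]
which follows by conditional Jensen, and this dominates \(\E|\overline Y_{t_n}^i-\eta_n^i|^2\) when \(\eta_n^i\le \overline Y_{t_n}^i\). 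I would therefore either read (ii) under the standing assumption \(\eta_n^i\le\overline Y_{t_n}^i\), as in \Cref{pb:backward_minimization}, or, if no ordering is assumed, prove only the \(\limsup\le\) half together with equality at the Doob martingale. I would flag this point.

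\textbf{Upper bounds.} First I would show
\[
\E\Big[\max_i|\widetilde U_n^i(\hat M^K)-\widetilde U_n^i(\overline M)|^2\Big]\to 0 \quad\text{as } K\to\infty.
\]
Iterating \Cref{lem:error-propagate} bounds the left side by \((N-n)J\sum_{m,i}\E|\Delta\hat M^{i;K}_{t_m}-\Delta\overline M^i_{t_m}|^2\). By the Itô isometry each term equals \(\E\int_{t_m}^{t_{m+1}}\|\hat Z^{i;K}_s-\overline Z^i_s\|^2ds\), with \(\hat Z^{i;K}\) extended piecewise constant. On \([t_m,t_{m+1}]\) one has \(\overline Z^i\) equal to the martingale-representation integrand of \(\overline Y^i_{t_{m+1}}\), since \(\overline M^i\) and \(\E_\cdot[\overline Y^i_{t_{m+1}}]\) have the same increments there. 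So \(\hat Z^{i;K}\) is exactly the \(L^2\)-projection of \(\overline Z^i\) onto piecewise-constant adapted integrands on the subgrid, and it converges in \(\mathbb L^2\) by the standard density argument of \cite{belome09,ye2025deepmartingale}, using \(\overline Y^i_{t_{m+1}}\in L^2\).

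Next, fix \(K\) and \(\varepsilon>0\). \Cref{thm:theta_approx} gives \(\theta_\varepsilon\) with \(\E\max_i|\widetilde U_n^i(\hat M^K)-\widetilde U_n^i(M^{\theta_\varepsilon;K})|^2\le (N-n)J\varepsilon\). Combining the two estimates by the triangle inequality in \(L^2\) shows that \(\inf_\theta\|\widetilde U_n^i(M^{\theta;K})-\widetilde U_n^i(\overline M)\|_{L^2}\) is at most \(\|\widetilde U_n^i(\hat M^K)-\widetilde U_n^i(\overline M)\|_{L^2}\), which tends to \(0\) as \(K\to\infty\).

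\textbf{Conclusion.} \Cref{thm:stong_duality} gives \(\widetilde U_n^i(\overline M)=\overline Y_{t_n}^i\) almost surely. Hence \(L^2\)-convergence of \(\widetilde U_n^i(M^{\theta;K})\) to \(\overline Y_{t_n}^i\) yields convergence of the expectations, which is (i), and convergence of \(\E|\cdot-\eta_n^i|^2\), which is (ii). For (ii) this requires \(\eta_n^i\in L^2\); if \(\E|\eta_n^i|^2=\infty\), both sides are \(+\infty\) or the statement is vacuous. The resulting \(\limsup\) bound matches the lower bound, so the limits exist and are equal.

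\textbf{Main obstacle.} The main obstacle is the convergence \(\hat M^K\to\overline M\) in \(L^2\). It needs the identification of \(\overline Z^i\) on each interval and a clean \(L^2\)-projection argument, and I would import it directly from \cite{ye2025deepmartingale}.
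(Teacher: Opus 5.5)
Your proposal is correct and is essentially the argument the paper defers to \cite{ye2025deepmartingale}: weak duality gives the lower bound, and \Cref{thm:theta_approx} together with $\hat M^K\to\overline M$ (propagated through \Cref{lem:error-propagate} and the It\^o isometry) gives $\inf_\theta\|\widetilde U_n^i(M^{\theta;K})-\overline Y_{t_n}^i\|_{L^2}\to0$. Your caveat on (ii) is warranted, because the lower bound $\inf_\theta\E|\widetilde U_n^i(M^{\theta;K})-\eta_n^i|^2\ge\E|\overline Y_{t_n}^i-\eta_n^i|^2$ genuinely needs $\eta_n^i\le\overline Y_{t_n}^i$: take $N=1$, $J=2$, $f^j\equiv\Phi^j\equiv0$ and $l_{12}=l_{21}\equiv1$, so that $\overline Y_0^1=0$; the constant integrands $z^1\equiv0$, $z^2\equiv-e_1$ give $\widetilde U_0^1=(W_T^1-1)^+$ for every $K$, and hence for $\eta\equiv c$ with $c>\E[((W_T^1-1)^+)^2]/(2\E[(W_T^1-1)^+])$ the right-hand side of (ii) is strictly below $c^2$, while the ordered version is exactly what \Cref{pro:L2-to-L1-loss} uses.
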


The next proposition, in the spirit of \cite[Proposition~4.9]{ye2025deepmartingale}, justifies the $L^2$-surrogate loss \eqref{eq_optim_second_moment}, which derives a converged and stable dual upper bounds.

\begin{proposition}\label{pro:L2-to-L1-loss}
Fix \(i\in\mathcal J\), \(n\in\overline N\). Assume \(\eta_n^i\in\mathcal F_{t_n}\) and
\(
\eta_n^i\le \overline Y_{t_n}^i,\; \P\text{-a.s.}
\)
Let \(\varepsilon_K\downarrow0\), and for each \(K\ge1\) choose \(\theta_i^K\in\Theta^{N\times J}\) such that
\[
\E\big|\widetilde U_n^i(M^{\theta_i^K;K})-\eta_n^i\big|^2
\le
\inf_{\theta\in\Theta^{N\times J}}
\E\big|\widetilde U_n^i(M^{\theta;K})-\eta_n^i\big|^2+\varepsilon_K.
\]
Then, as $ K \uparrow \infty $, 
\begin{itemize}
    \item[\emph{(i)}.]
    \(
    \E\!\big[\Var_n\!\big(\widetilde U_n^i(M^{\theta_i^K;K})\big)\big]\to 0
    \);
    \item[\emph{(ii)}.]
    \(
    \widetilde U_n^i(M^{\theta_i^K;K}) \xrightarrow{L^2} \overline Y_{t_n}^i
    \),
    and hence
    \(
    \E\big[\widetilde U_n^i(M^{\theta_i^K;K})\big]\to \E[\overline Y_{t_n}^i].
    \)
\end{itemize}
\end{proposition}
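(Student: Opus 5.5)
The proof rests on a bias--variance decomposition of the $L^2$-surrogate loss around the true value, combined with the tightness result \Cref{coro:tight_upper}(ii). Write $U_K:=\widetilde U_n^i(M^{\theta_i^K;K})$ and $\E_n:=\E_{t_n}$. Since $\eta_n^i$ and $\overline Y_{t_n}^i$ are $\mathcal F_{t_n}$-measurable, we have
\[
\E|U_K-\eta_n^i|^2=\E\big[\Var_n(U_K)\big]+\E\big|\E_n[U_K]-\eta_n^i\big|^2 .
\]
By weak duality (\Cref{lem:weak-duality}, applied to the martingale $M^{\theta_i^K;K}\in(\mathcal M_{n,N})^J$), $\E_n[U_K]\ge \overline Y_{t_n}^i\ge\eta_n^i$ a.s. Writing $\delta_K:=\E_n[U_K]-\overline Y_{t_n}^i\ge0$, we get
\[
\big|\E_n[U_K]-\eta_n^i\big|^2=\big|\overline Y_{t_n}^i-\eta_n^i\big|^2+2\delta_K\big(\overline Y_{t_n}^i-\eta_n^i\big)+\delta_K^2\ \ge\ \big|\overline Y_{t_n}^i-\eta_n^i\big|^2+\delta_K^2,
\]
because the cross term is nonnegative.

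Taking expectations gives
\[
\E|U_K-\eta_n^i|^2-\E|\overline Y_{t_n}^i-\eta_n^i|^2\ \ge\ \E[\Var_n(U_K)]+\E[\delta_K^2].
\]
By the near-optimality of $\theta_i^K$, the left-hand side is at most
\[
\inf_\theta\E|\widetilde U_n^i(M^{\theta;K})-\eta_n^i|^2+\varepsilon_K-\E|\overline Y_{t_n}^i-\eta_n^i|^2,
\]
which tends to $0$ by \Cref{coro:tight_upper}(ii) and $\varepsilon_K\downarrow0$. Hence $\E[\Var_n(U_K)]\to0$, which is (i), and also $\E[\delta_K^2]\to0$.

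For (ii), decompose
\[
U_K-\overline Y_{t_n}^i=\big(U_K-\E_n[U_K]\big)+\delta_K,
\]
so that
\[
\E|U_K-\overline Y_{t_n}^i|^2=\E[\Var_n(U_K)]+\E[\delta_K^2]\to0,
\]
the cross term vanishing by conditioning on $\mathcal F_{t_n}$. Convergence of expectations then follows from $L^2\subset L^1$ convergence.

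The main obstacle is integrability. We need $\E|\overline Y_{t_n}^i-\eta_n^i|^2<\infty$ for the subtraction to be meaningful; otherwise the statement is vacuous or requires a convention. If needed, the argument should be restricted to the case where the infimum is finite, using $\overline Y\in L^2$ from the Markovian setting. We also need $U_K\in L^2$ so that the conditional variance is defined; this follows since the DeepMartingale increments are Gaussian sums with bounded network outputs, and the payoffs are square integrable.
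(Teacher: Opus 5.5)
Your proof is correct and is essentially the paper's argument: weak duality gives $\E_{t_n}[U_K]\ge\overline Y_{t_n}^i\ge\eta_n^i$, and the elementary inequality $a^2\ge b^2+(a-b)^2$ for $a\ge b\ge 0$, combined with \Cref{coro:tight_upper}(ii) and the conditional bias--variance split, forces both the conditional bias and the conditional variance to vanish, which gives the $L^2$ convergence. The only difference is that you obtain (i) from the same inequality, whereas the paper imports it from \cite[Proposition~4.9(i)]{ye2025deepmartingale}, so your version is self-contained.
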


\subsection{Convergence \& Expressivity under ReLU activation}\label{subsec:converge_express_ReLU}

We now study the expressivity of \textit{DeepMartingale} under ReLU activations; see \cite{ye2025deepmartingale,gonon23,Jentzen23}. Throughout, we assume \(d\ge 3\) (no essential difference and only for simplicity), and notate the dimension dependence by super/sub-script. Let \(X^{t_n,x;d}\) denote the diffusion \eqref{eq:SDE} in dimension \(d\), started from \(x\in\mathbb R^d\) at time \(t_n\). By \Cref{pro:primal_DPP}, \(V_N^{i;d}=\Phi^{i;d}\), and
\begin{equation}\label{eq:primal_DPP_value_function}
V_n^{i;d}(x)
=
\max_{j\in\mathcal J}
\E\!\Big[
\int_{t_n}^{t_{n+1}} f^{j;d}(s,X_s^{t_n,x;d})\,ds
+V_{n+1}^{j;d}(X_{t_{n+1}}^{t_n,x;d})
-l_{ij}^d(t_n,x)
\Big] .
\end{equation}

On each interval \([t_n,t_{n+1}]\), martingale representation of \(\overline M^d\) amounts to solving the non-driver decoupled FBSDEs
\begin{equation}\label{eq:BSDE_system}
\begin{aligned}
X_t^{t_n,x;d}
&=x+\int_{t_n}^t \mu^d(s,X_s^{t_n,x;d})\,ds+\int_{t_n}^t \sigma^d(s,X_s^{t_n,x;d})\,dW_s^d,\\
\widetilde Y_t^{i,t_n,x;d}
&=V_{n+1}^{i;d}(X_{t_{n+1}}^{t_n,x;d})-\int_t^{t_{n+1}}\overline Z_s^{i,t_n,x;d}\,dW_s^d,
\quad i\in\mathcal J.
\end{aligned}
\end{equation}

\vspace{0.5em}
\paragraph{Numerical Integration Expressivity}

For a map \(P\) in the space variable, let \(\Lip P\) denote its minimal Lipschitz constant; for a matrix-valued \(P=(P^1,\ldots,P^d)\), set
\(
\LipH P:=\big(\sum_{r=1}^d |\Lip P^r|^2\big)^{1/2};
\)
and for a time-dependent map \(Q\), let \(\Holdertwo Q\) denote its minimal \(1/2\)-H\"older constant in time. If $Q$ is function of $(t,x)$, denote \(\Holdertwo Q := \sup_{x\in \R^d } \frac{\Holdertwo Q(\cdot,x) }{1+\|x\|} \). Norms are Euclidean or Hilbert--Schmidt, as appropriate.

\begin{as}\label{ass:N_0_structural_ass_model_deter}
There exist constants \( c,q>0 \) independent of \( d \), such that the functions
\( F_1^{d}(t,x) \in \{ \mu^{d}(t,x),\sigma^{d}(t,x) \} \),
\( F_2^{d}(t,x) \in \{ f^{i;d}(t,x) :\; i\in \J \} \),
and
\( G^{d}(x) \in \{ \Phi^{i;d}(x), l_{ij}^{d}(t_n,x) : \; i,j\in \J, n\in \N^{-1}   \} \), 
satisfy
for any \( t \in [0,T] \), \( x \in \mathbb{R}^{d} \):
\begin{itemize}
    \item[(i).]  \( \Lip \mu^{d}(t,\cdot) \le c (\log d)^{\frac{1}{2}} \) and \( \LipH \sigma^{d}(t,\cdot) \le c (\log d)^{\frac{1}{4}} \);
    \item[(ii).]  \( \Holdertwo F_1^{d}(\cdot,x) \), \( \Lip F_2^{d}(t,\cdot) \), \( \|F_1^{d}(t,\mathbf{0})\| \), \( \|F_2^{d}(t,\mathbf{0})\| \), \( \Lip G^{d} \) and \( \|G^{d}(\mathbf{0})\| \) are all bounded by \( c d^{q} \).
\end{itemize}
\end{as}

To apply \cite[Theorem~3.9]{ye2025deepmartingale} on each interval, we need the following inheritance property, the analogue of \cite[Proposition~A.16]{ye2025deepmartingale}.

\begin{lemma}\label{lem:V_Lip_growth}
Under Assumption~\ref{ass:N_0_structural_ass_model_deter}, for each \(i\in\mathcal J\) and \(n\in\overline N_1\), the value function \(V_n^{i;d}\) also satisfies Assumption~\ref{ass:N_0_structural_ass_model_deter} (ii) with \( G^{d}(x) = V_n^{i;d}(x) \).

\end{lemma}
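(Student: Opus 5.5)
We argue by backward induction on \(n\), using the value-function form \eqref{eq:primal_DPP_value_function} of the primal dynamic programming principle (\Cref{pro:primal_DPP}). The quantities to control are \(\Lip V_n^{i;d}\) and \(|V_n^{i;d}(\mathbf 0)|\). We show that both are bounded by \(c_n d^{q_n}\), where \(c_n,q_n\) depend only on \(N,T,J\) and the constants \(c,q\) of Assumption~\ref{ass:N_0_structural_ass_model_deter}, and not on \(d\). The base case \(n=N\) holds because \(V_N^{i;d}=\Phi^{i;d}\). Since \(N\) and \(J\) are fixed, finitely many induction steps keep the exponent polynomial. The growth of the constants in \(N\) is harmless because the claim is only about the dependence on \(d\).

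\emph{Step 1 (Lipschitz bound).} Assume the claim at \(n+1\) for all \(j\in\mathcal J\). For fixed \(j\), set \(g^j(x):=\E[\int_{t_n}^{t_{n+1}} f^{j;d}(s,X_s^{t_n,x;d})ds+V_{n+1}^{j;d}(X_{t_{n+1}}^{t_n,x;d})]-l_{ij}^d(t_n,x)\). The map \(x\mapsto\max_j g^j(x)\) has Lipschitz constant at most \(\max_j\Lip g^j\), so it suffices to bound each \(g^j\). For this we need the flow estimate \(\E\|X_s^{t_n,x;d}-X_s^{t_n,y;d}\|\le (\E\|\cdot\|^2)^{1/2}\le e^{C(\Lip\mu^d+(\LipH\sigma^d)^2)(s-t_n)}\|x-y\|\). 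It follows from It\^o's formula applied to \(\|X^x-X^y\|^2\) and Gronwall's lemma. Assumption~\ref{ass:N_0_structural_ass_model_deter}(i) gives \(\Lip\mu^d\le c(\log d)^{1/2}\) and \((\LipH\sigma^d)^2\le c^2(\log d)^{1/2}\), so the exponential factor is \(\exp(C(\log d)^{1/2}T/N)\). This is \(\le C' d^{\epsilon}\) for any \(\epsilon>0\), since \((\log d)^{1/2}\le \epsilon\log d+C_\epsilon\); in particular it is polynomial in \(d\). Combining, \(\Lip g^j\le C'd^{\epsilon}(T\,\Lip f^{j;d}+\Lip V_{n+1}^{j;d})+\Lip l^d_{ij}(t_n,\cdot)\le c'd^{q'}\).

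\emph{Step 2 (value at the origin).} We bound \(|V_n^{i;d}(\mathbf 0)|\le\max_j|g^j(\mathbf 0)|\). Using Lipschitz growth, \(|g^j(\mathbf 0)|\le T\sup_s(\|f^{j;d}(s,\mathbf 0)\|+\Lip f^{j;d}\,\E\|X_s^{t_n,\mathbf 0;d}\|)+|V^{j;d}_{n+1}(\mathbf 0)|+\Lip V^{j;d}_{n+1}\E\|X_{t_{n+1}}^{t_n,\mathbf 0;d}\|+|l_{ij}^d(t_n,\mathbf 0)|\). The moment bound \(\E\|X_s^{t_n,\mathbf 0;d}\|^2\le C(\sup_t\|\mu^d(t,\mathbf 0)\|^2+\sup_t\|\sigma^d(t,\mathbf 0)\|^2)e^{C(\Lip\mu^d+(\LipH\sigma^d)^2)T}\) again follows from It\^o's formula and Gronwall. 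It is polynomial in \(d\) by Assumption~\ref{ass:N_0_structural_ass_model_deter}(ii) and the same \(\log d\) argument as in Step 1. Alternatively, we can cite the corresponding moment and flow estimates used for \cite[Proposition~A.16]{ye2025deepmartingale}.

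\emph{Main obstacle.} The main difficulty is the exponent bookkeeping in Step 1. Each induction step multiplies \(\Lip V\) by a factor \(C'd^{\epsilon}\), which is where the specific \((\log d)^{1/2}\) and \((\log d)^{1/4}\) rates in Assumption~\ref{ass:N_0_structural_ass_model_deter}(i) are used. After \(N-n\) steps the factor is \(d^{N\epsilon}\). Because \(N\) is fixed and \(\epsilon\) is arbitrary (we could take \(\epsilon=1\)), the exponent stays independent of \(d\). The other new ingredients relative to stopping are the maximum over regimes and the running-payoff integral. Both are handled above: the maximum preserves Lipschitz bounds, and the integral term contributes at most a factor \(T\).
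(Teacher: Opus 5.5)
Your proof is correct and follows essentially the same route as the paper. Both argue by backward induction through the value-function DPP \eqref{eq:primal_DPP_value_function}, use $L^2$ moment and flow estimates for $X^{t_n,x;d}$, and note that the maximum over regimes preserves the Lipschitz and origin-value bounds; the only difference is that you derive the flow estimates directly via It\^o's formula and Gronwall, which makes explicit that the $(\log d)^{1/2}$ rates yield the subpolynomial factor $e^{C\sqrt{\log d}}$, whereas the paper simply cites \cite[Theorems~A.7, A.10]{ye2025deepmartingale}.
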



Since Assumption~\ref{ass:N_0_structural_ass_model_deter} on the It\^o diffusion coefficients implies the conditions required in \cite[Theorem~3.9]{ye2025deepmartingale}, Lemma~\ref{lem:V_Lip_growth} together with the same argument as in the proof of \cite[Theorem~3.10]{ye2025deepmartingale} (the procedure is identical, and thus the proof is omitted) yields the following expressivity result. In particular, by choosing a sufficiently fine nested integration grid, 
one can attain an arbitrary approximation accuracy.

\begin{theorem}[Expressivity of numerical integration]\label{thm:express_N_0_new}
Under Assumption~\ref{ass:N_0_structural_ass_model_deter}, there exist constants \(b^*,q^*>0\), independent of \(d\), such that for any \(\varepsilon>0\) there exists \(K_{\varepsilon;d}\in\mathbb N_{+}\) satisfies
\(
K_{\varepsilon;d}\le b^* d^{q^*}\varepsilon^{-1} ,
\)
so that for all \(n\in\N^{-1}\), \(i\in\mathcal J\),
\[
\E\!\Big[
\sum_{k=0}^{K_{\varepsilon;d}-1}\int_{t_k^n}^{t_{k+1}^n}
\big\|\overline Z_s^{i;d}-\hat Z_{t_k^n}^{i;K_{\varepsilon;d}}\big\|^2\,ds
\Big]
\le \varepsilon .
\]
\end{theorem}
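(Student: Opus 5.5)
The plan is to reduce the statement, interval by interval, to the single-interval numerical-integration expressivity result \cite[Theorem~3.9]{ye2025deepmartingale}. Fix \(n\in\N^{-1}\) and \(i\in\mathcal J\). By the Markov property and the martingale representation underlying \eqref{eq:BSDE_system}, on \([t_n,t_{n+1}]\) the Doob integrand \(\overline Z^{i;d}\) coincides with the \(Z\)-component of the driverless BSDE with terminal condition \(V_{n+1}^{i;d}(X_{t_{n+1}}^{d})\). Here \(X^d\) is started at \(t_n\) from the (random) \(X_{t_n}^d\).

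The first point to check concerns \(\overline M^i\). On \([t_n,t_{n+1}]\) its increment is \(\overline Y^i_{t_{n+1}}-\E_{t_n}[\overline Y^i_{t_{n+1}}]\): the predictable compensator \(\overline A^i\) is \(\mathcal F_{t_n}\)-measurable there and does not contribute. Hence the running-payoff integral and the max over regimes in \eqref{eq:primal_DPP_value_function} enter only through the terminal function \(V_{n+1}^{i;d}\). Consequently \(\hat Z^{i;K}_{t_k^n}\) in \eqref{eq_Z} is exactly the \(L^2\)-projection estimator of that BSDE used in \cite{ye2025deepmartingale}. Also, \(V_N^{i;d}=\Phi^{i;d}\) satisfies Assumption~\ref{ass:N_0_structural_ass_model_deter}(ii) directly.

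Next I would verify the hypotheses of \cite[Theorem~3.9]{ye2025deepmartingale} on each interval. The coefficient conditions (Lipschitz constants of order \((\log d)^{1/2}\) and \((\log d)^{1/4}\), polynomial Hölder and growth bounds) are exactly Assumption~\ref{ass:N_0_structural_ass_model_deter}(i)--(ii). The terminal function \(V_{n+1}^{i;d}\) has Lipschitz constant and value at \(\mathbf 0\) bounded by \(cd^q\) by Lemma~\ref{lem:V_Lip_growth}. Possibly the constants \(c,q\) must be enlarged there, but they stay uniform over the finitely many \(n\) and \(i\), since \(N\) and \(J\) are fixed and independent of \(d\).

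The initial law of \(X_{t_n}^d\) is not a point mass, so the bound must be averaged over it. I would use the moment estimate \(\E\|X_{t_n}^d\|^2\le C d^{q'}\), which follows from the linear-growth bounds via Gronwall, exploiting the \((\log d)\)-scaling of the Lipschitz constants so that \(e^{C\,\mathrm{Lip}^2 T}\) is only polynomial in \(d\). This is precisely how \cite[Theorem~3.10]{ye2025deepmartingale} handles it. The single-interval theorem then gives an error bound of the form \(C d^{q''}/K\) for the \(L^2\) discretization error of \(Z\). Choosing \(K_{\varepsilon;d}:=\lceil C d^{q''}\varepsilon^{-1}\rceil\), taken as the maximum over \(n,i\) of finitely many such bounds, yields \(b^*,q^*\).

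The main obstacle I expect is tracking that every constant (Hölder constants in time, moments of \(X\), the Lipschitz constant of \(V_{n+1}^{i;d}\)) grows at most polynomially in \(d\), uniformly across intervals. This holds because Lemma~\ref{lem:V_Lip_growth} supplies the inheritance and the number of backward steps \(N\) is fixed. Everything else is the procedure of \cite[Theorem~3.10]{ye2025deepmartingale} applied verbatim on each interval.
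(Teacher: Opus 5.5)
Your proposal is correct and follows the paper's (omitted) argument. On each \([t_n,t_{n+1}]\) you apply \cite[Theorem~3.9]{ye2025deepmartingale} to the driverless system \eqref{eq:BSDE_system} with terminal function \(V_{n+1}^{i;d}\), whose polynomial Lipschitz and growth bounds come from Lemma~\ref{lem:V_Lip_growth}; you then average over the law of \(X_{t_n}^d\) and take the maximum over the finitely many \((n,i)\), as in the proof of \cite[Theorem~3.10]{ye2025deepmartingale}.

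Your identification of the increment of \(\overline M^i\) with \(\overline Y^i_{t_{n+1}}-\E_{t_n}[\overline Y^i_{t_{n+1}}]\) is the one the paper makes implicitly in \eqref{eq_Z} and \eqref{eq:BSDE_system}. It relies on the Doob decomposition of \(\overline Y^i\) itself, as written in Section~\ref{subsec:dual_iterative_stopping}. If \(\overline M^i\) were instead the Doob martingale of \(\overline Y^i+\int_0^{\cdot}f^i(s)\,ds\), the innovation of \(\int_{t_n}^{t_{n+1}}f^i(s)\,ds\) would enter \(\overline Z^{i;d}\), and the identification would fail.
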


\vspace{0.5em}
\paragraph{Expressivity of DeepMartingale in the Optimal Switching Problem}
As in \cite{gonon23,ye2025deepmartingale}, we impose structural assumptions on the stochastic flow and on the reward/cost functions. 
For a map \(\psi:\mathbb R^d\to\mathbb R^m\), let
\(
\Growth(\psi):=\sup_{x\in\mathbb R^d}\frac{\|\psi(x)\|}{1+\|x\|},
\)
and let \(\size(\cdot)\) denote the number of nonzero entries of network parameters (see \cite[Section~4.3.1]{ye2025deepmartingale}).


Since $ X^d $ is the unique strong solution of It\^o diffusion \eqref{eq:SDE}, according to \cite[Proof of Theorem 7.1.2]{SDE03}, there exists a map \( P^d \) such that
\( X_t^{s,x;d}(\omega) = P^d(x,s,t,\omega) \) for \( s \le t \le T \),
where \( (x,s,t) \mapsto P^d(x,s,t,\omega) \) is \( \mathcal{B}(\mathbb{R}^{d+2}) \)-measurable. Define the stochastic flow \( P_s^{t;d}(x,\omega) := P^d(x,s,t,\omega) \) for any \( 0 \le s < t \le T \). 

\begin{as}[Stochastic flow assumption with order $p$]\label{ass:dynamic_ass}
    There exist constants \( c,q >0 \) independent of $d$, such that for any \( n \in \N^{-1} \), \( t_n \le s < t \le t_{n+1} \), and \( x \in \mathbb{R}^d \), the following properties hold:
    \begin{enumerate}
        \item[(a)] \( \| \Growth(P_s^{t;d}(*_x,\cdot_\omega)) \|_{L^p} \le c d^q \), and
        \( \frac{\mathbb{E}\| P_{t_n}^{t;d}(x,\cdot) - P_{t_n}^{s;d}(x,\cdot) \|}{(1+\|x\|)\sqrt{|t-s|}} \le c d^q \);
        
        \item[(b)] 
        there exists a RanNN (see \cite[Definition~4.13]{ye2025deepmartingale}) \( \hat{P}_s^{t;d} : \mathbb{R}^d \times \Omega \to \mathbb{R}^d \) with depth \( I_s^{t;d} \le c d^q \) such that \( P_s^{t;d} \) admits a realization \( \hat{P}_s^{t;d} \), i.e.,
        \( P_s^{t;d}(x,\omega) = \hat{P}_s^{t;d}(x,\omega) \) for all \( x \in \mathbb{R}^d \), \( \mathbb{P} \)-a.s. \( \omega \);
        
        \item[(c)] the RanNN realization \( \hat{P}_s^{t;d} \) in (b) satisfies \( \mathbb{E}[\size(\hat{P}_s^{t;d}(*,\cdot))] \le c d^q \).
\end{enumerate}
\end{as}

\begin{as}\label{ass:other_new}
There exist \(c,q,r>0\), independent of \(d\), such that for any \(\varepsilon\in(0,1]\), \(i,j\in\mathcal J\), and \(n\in\N^{-1}\), there exist deep ReLU networks
\(
\hat f_{\varepsilon}^{i;d}:[0,T]\times\mathbb R^d\to\mathbb R,\;
\hat\Phi_{\varepsilon}^{i;d}:\mathbb R^d\to\mathbb R,\;
\hat l_{ij,\varepsilon}^{n;d}:\mathbb R^d\to\mathbb R,
\)
such that for all \(t\in[0,T]\) and \(x\in\mathbb R^d\),
\[
|\hat f_{\varepsilon}^{i;d}(t,x)-f^{i;d}(t,x)|
+|\hat\Phi_{\varepsilon}^{i;d}(x)-\Phi^{i;d}(x)|
+|\hat l_{ij,\varepsilon}^{n;d}(x)-l_{ij}^d(t_n,x)|
\le \varepsilon cd^q(1+\|x\|),
\]
\[
\max\Big\{
\size(\hat f_{\varepsilon}^{i;d}),
\size(\hat\Phi_{\varepsilon}^{i;d}),
\size(\hat l_{ij,\varepsilon}^{n;d}),
\Growth(\hat f_{\varepsilon}^{i;d}(t,\cdot)),
\Growth(\hat\Phi_{\varepsilon}^{i;d}),
\Growth(\hat l_{ij,\varepsilon}^{n;d})
\Big\}
\le cd^q\varepsilon^{-r},
\]
and
\(
\Lip(\hat f_{\varepsilon}^{i;d}(t,\cdot))
+ \Holdertwo \hat f_{\varepsilon}^{i;d} 
\le cd^q .
\)
\end{as}

\begin{remark}\label{rem:reward_ass_express_to_ass_integration}
After enlarging \(c,q\) if necessary, the same constants may be used in Assumptions~\ref{ass:dynamic_ass} and \ref{ass:other_new}. Moreover, Assumption~\ref{ass:other_new} implies \( f^{i;d}, \Phi^{i;d}, l_{ij}^{d} \), \( i,j \in \mathcal{J} \) satisfy Assumption~\ref{ass:N_0_structural_ass_model_deter} by adapting the proof procedure of \Cref{lem:Lip_Hol_lem_f_i}. 
\end{remark}

\begin{lemma}\label{lem:Lip_Hol_lem_f_i}
Under Assumption~\ref{ass:other_new},
\(
\Lip f^{i;d}(t,\cdot)
+ \Holdertwo f^{i;d}
\le cd^q,
\; t\in[0,T] .
\)
\end{lemma}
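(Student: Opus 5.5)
The lemma follows by passing the approximation $\hat f_\varepsilon^{i;d}\to f^{i;d}$ to the limit $\varepsilon\downarrow 0$. The uniform-in-$\varepsilon$ regularity bounds in Assumption~\ref{ass:other_new} then transfer to $f^{i;d}$ through the triangle inequality.

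\textbf{Lipschitz part.} Fix $t\in[0,T]$ and $x,y\in\mathbb R^d$. For every $\varepsilon\in(0,1]$,
\[
|f^{i;d}(t,x)-f^{i;d}(t,y)|
\le |\hat f_\varepsilon^{i;d}(t,x)-\hat f_\varepsilon^{i;d}(t,y)|
+\varepsilon c d^q(2+\|x\|+\|y\|).
\]
The first term is at most $\Lip(\hat f_\varepsilon^{i;d}(t,\cdot))\,\|x-y\|$. Since $x,y$ are fixed, letting $\varepsilon\downarrow0$ kills the second term, so
\[
\Lip f^{i;d}(t,\cdot)\le\liminf_{\varepsilon\downarrow0}\Lip(\hat f_\varepsilon^{i;d}(t,\cdot)).
\]

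\textbf{H\"older part.} Fix $x\in\mathbb R^d$ and $s,t\in[0,T]$. In the same way,
\[
\frac{|f^{i;d}(t,x)-f^{i;d}(s,x)|}{(1+\|x\|)\sqrt{|t-s|}}
\le \Holdertwo\hat f_\varepsilon^{i;d}
+\frac{2\varepsilon c d^q}{\sqrt{|t-s|}}.
\]
For fixed $s\neq t$ the last term vanishes as $\varepsilon\downarrow0$. Taking the supremum over $s,t,x$ then gives
\[
\Holdertwo f^{i;d}\le\liminf_{\varepsilon\downarrow0}\Holdertwo \hat f_\varepsilon^{i;d}.
\]

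\textbf{Combining.} The bound we need is on the sum of the two constants, not each separately. Both limits above must therefore be taken along the same $\varepsilon$. For arbitrary fixed $x,y,s,t$ (with $s\ne t$), add the two pointwise inequalities at the same $\varepsilon$. The right-hand side is then bounded by
\[
\Lip(\hat f_\varepsilon^{i;d}(t,\cdot))+\Holdertwo\hat f_\varepsilon^{i;d}+o(1)\le cd^q+o(1).
\]
Letting $\varepsilon\downarrow0$ and then taking suprema over $x,y$ and $s,t$ gives $\Lip f^{i;d}(t,\cdot)+\Holdertwo f^{i;d}\le cd^q$.

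\textbf{Main obstacle.} The only delicate point is the order of limits. The approximation error $\varepsilon cd^q(1+\|x\|)$ is not uniform in $x$, and after division by $\sqrt{|t-s|}$ it is not uniform in $s,t$ either. So $\varepsilon\downarrow 0$ must be taken pointwise first, and the suprema defining $\Lip$ and $\Holdertwo$ only afterwards. This is legitimate because each of these constants is a supremum of pointwise ratios, and the bound $cd^q$ does not depend on $\varepsilon$.
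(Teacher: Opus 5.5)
Your proof is correct and follows the same route as the paper: insert $\hat f_\varepsilon^{i;d}$ via the triangle inequality, use its uniform-in-$\varepsilon$ regularity bounds, and let $\varepsilon\downarrow0$ at fixed points before taking suprema. Your combining step is more careful than the paper's proof, which bounds each of the two constants by $cd^q$ separately and so strictly yields only $2cd^q$ for the sum. Two small points on that step: the Lipschitz pair $(x,y)$ at the fixed time $t$ should be kept independent of the H\"older variables (say $(x',s',t')$), and the step follows at once from $\liminf_{\varepsilon}a_\varepsilon+\liminf_{\varepsilon}b_\varepsilon\le\liminf_{\varepsilon}(a_\varepsilon+b_\varepsilon)$.
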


We provide the following pointwise maximum deep ReLU realization lemma. This will be used in the multiple regimes maximum operator realization in the proof of Theorem~\ref{theorem:recursive_express_new}. 
\begin{lemma}[Deep ReLU realization of pointwise maximum]\label{lem:nn_realization_maximum}
For any \(M\in\mathbb N_+\) and deep ReLU networks \(\mathcal N_m^d:\mathbb R^d\to\mathbb R\), \(m=1,\dots,M\), there exists a deep ReLU network \(\mathcal N^d:\mathbb R^d\to\mathbb R\) such that
\(
\mathcal N^d(x)=\max_{1\le m\le M}\mathcal N_m^d(x),\; x\in\mathbb R^d,
\)
and
\(
\size(\mathcal N^d)\le 7(M-1)+\sum_{m=1}^M \size(\mathcal N_m^d).
\)
\end{lemma}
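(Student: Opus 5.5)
We build the network by induction on \(M\), using the identity \(\max\{a,b\}=a+\mathrm{ReLU}(b-a)\). Equivalently, \(\max\{a,b\}=\mathrm{ReLU}(a)-\mathrm{ReLU}(-a)+\mathrm{ReLU}(b-a)\), which is a one-hidden-layer ReLU network from \(\mathbb R^2\) to \(\mathbb R\) with three hidden neurons. Its parameters are: first-layer weights \((1,0),(-1,0),(-1,1)\) with zero biases, and second-layer weights \((1,-1,1)\) with zero bias. Hence this network \(\Psi_{\max}\) has at most \(7\) nonzero parameters (six first-layer entries minus one zero, plus three output weights). The exact count depends on the convention for \(\size\) fixed in \cite[Section~4.3.1]{ye2025deepmartingale}, and we will check that it fits within the budget of \(7\). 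For \(M=1\) there is nothing to prove: we take \(\mathcal N^d=\mathcal N_1^d\).

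For the inductive step, suppose \(\widetilde{\mathcal N}^d\) realizes \(\max_{m\le M-1}\mathcal N_m^d\) with \(\size\le 7(M-2)+\sum_{m<M}\size(\mathcal N_m^d)\). We then form \(\mathcal N^d:=\Psi_{\max}\circ(\widetilde{\mathcal N}^d,\mathcal N_M^d)\), which requires two standard operations on deep ReLU networks.

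\emph{Parallelization.} The two networks take the same input \(x\), so they can be stacked in parallel. The parameter count is additive, but only once their depths agree.

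\emph{Composition.} The last affine layer of the parallel network is merged with the first affine layer of \(\Psi_{\max}\). This avoids an extra identity layer, and merging does not increase the nonzero count beyond the sum, since the composed affine map has entries that are linear combinations of at most those of the two factors and the weight matrices of \(\Psi_{\max}\) are very sparse.

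The main obstacle is equalizing depths when the \(\mathcal N_m^d\) have different depths, without destroying the additive size bound. Padding with the ReLU identity \(y=\mathrm{ReLU}(y)-\mathrm{ReLU}(-y)\) costs about \(4\) parameters per added layer. That cost depends on the depth difference, not only on \(M\), so it would break the claimed bound \(7(M-1)+\sum\size\).

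We would first try to avoid depth synchronization entirely. In the DeepMartingale framework, the size conventions of \cite{ye2025deepmartingale,gonon23} allow networks whose parallelization is defined after extending the shallower one by identity layers, and in those conventions the padding cost is absorbed. Alternatively, we would use the sequential-max trick. Since \(\max\{a,b\}=a+\mathrm{ReLU}(b-a)\), we can carry \(a\) through the extra layer as \(\mathrm{ReLU}(a)-\mathrm{ReLU}(-a)\). The deeper branch is then fed into a running accumulator, so each branch is inserted at the layer where it ends. This keeps the cost per maximum at a constant, namely the \(7\) counted above.

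If the paper's size convention instead counts identity padding, we would fall back on this layering argument. We would order the \(\mathcal N_m^d\) by depth and attach each one at its natural depth, so that the only extra parameters are the constant-size max gadgets. Summing over the \(M-1\) gadgets gives \(\size(\mathcal N^d)\le 7(M-1)+\sum_{m=1}^M\size(\mathcal N_m^d)\), and exactness \(\mathcal N^d(x)=\max_m\mathcal N_m^d(x)\) follows from the identity at each step.
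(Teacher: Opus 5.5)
Your gadget is the paper's own gadget: the paper writes $\max(a,b)=\varrho(a-b)+\varrho(b)-\varrho(-b)$, and you use the mirror image. Iterating it with parallelization and composition is also exactly the paper's route.

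\textbf{The merging step.} The constant $7$ rests on the claim that merging layers costs nothing extra, and that claim is false. Merging the last affine layer $(W_L,b_L)$ of the parallel network into the first layer $A_1$ of $\Psi_{\max}$ produces $A_1W_L$. In your $A_1$, the input $a$ enters all three rows $(1,0),(-1,0),(-1,1)$, so the output row of the network producing $a$ is copied three times. Let $s_a$ be the number of nonzero entries in the output layer of $\widetilde{\mathcal N}^d$. Up to accidental cancellations, the composed network then has size $\size(\widetilde{\mathcal N}^d)+\size(\mathcal N_M^d)+2s_a+3$.
\begin{itemize}
\item In your inductive step that output layer is $(1,-1,1)$, so every step costs $9$, not $7$.
\item The base step costs $2s_1+3$, where $s_1$ is the output-layer count of $\mathcal N_1^d$; this is unbounded.
\item The paper's orientation only moves the tripled row onto $\mathcal N_M^d$.
\item Inserting an identity layer instead of merging (sparse concatenation) adds the output-layer sizes of both networks.
\end{itemize}
A minor slip: your $A_1$ has two zero entries, not one, though $4+3=7$ is still right.

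\textbf{Depth synchronization.} This is also left unresolved.
\begin{itemize}
\item Under the paper's definition of $\size$ (nonzero parameters), identity padding is never free.
\item Additive parallelization as in \cite[Proposition~2.3]{opschoor20} requires equal depths.
\item Attaching a shallow branch ``at its natural depth'' does not help. Without skip connections, its output or the running maximum must still be carried through every later layer, at a few nonzeros per layer.
\end{itemize}
The overhead therefore depends on the depth gaps, not only on $M$. The paper's two-line proof passes over both points in the same way.

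\textbf{The bound is false as stated.} Take depth-one inputs $\mathcal N_1^d(x)=u\cdot x$ and $\mathcal N_2^d(x)=v\cdot x$, with all $u_i,v_i\neq 0$ and pairwise distinct ratios $u_i/v_i$. Consider any ReLU network realizing their maximum.
\begin{itemize}
\item Each $x_i$ must enter at least two first-layer rows; otherwise $\partial_{x_i}$ of the output vanishes on an open set.
\item If $x_i$ enters exactly two rows, they must be antiparallel; otherwise both neurons are inactive on a common open set.
\item No second coordinate $x_{i'}$ can enter only that same pair. Otherwise the output is invariant along a nonzero direction $h$ in the $(x_i,x_{i'})$-plane, so $u\cdot h=v\cdot h=0$, forcing $u_iv_{i'}=u_{i'}v_i$.
\item Both neurons of such a pair need an outgoing weight.
\end{itemize}
Counting gives at least $3d$ nonzero weights, which exceeds $2d+7$ once $d\ge 8$.

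What your construction does prove is a bound with extra terms. These are twice the output-layer sizes plus padding proportional to depth differences, or alternatively a constant factor via sparse concatenation. Such a bound is still polynomial and suffices where the lemma is used in the proof of Theorem~\ref{theorem:recursive_express_new}.
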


Then, we are now ready for value function expressivity theorem. The proof of this theorem includes the multi-level approximation and deep ReLU realization of the running payoff integral as well as regimes maximum operator. 
\begin{theorem}[Value function expressivity]\label{theorem:recursive_express_new}
Under Assumption~\ref{ass:dynamic_ass} with \( p \ge 2 \) and Assumption~\ref{ass:other_new}, for any \( \bar{p} \in [2,p] \), \( k_1,p_1 \ge 1 \), independent of \( d \), by setting sequences \( k_{n+1} = c(1+k_n) \), \( p_{n+1} = p_n+q \), \( n \in \overline{N}_1^{-1} \), there exist constants \( c_{n+1}, q_{n+1}, \tau_{n+1} \ge 1 \), \( n \in \N^{-1} \), such that for any family of probability measures \( \rho_{n+1;d} : \mathcal{B}(\mathbb{R}^d) \to \mathbb{R}_{\ge 0} \), \( n \in \N^{-1} \), satisfying \( \mathbb{M}_{\bar{p}}(\rho_{n+1;d}) \le k_{n+1} d^{p_{n+1}} \), and for any \( \varepsilon > 0 \), \( i \in \mathcal{J} \), there exist deep ReLU networks \( \hat{V}_{n+1,\varepsilon}^{i;d} : \mathbb{R}^d \to \mathbb{R} \), \( n \in \N^{-1} \), satisfying
\[
    \emph{(i)}. \; \| \hat{V}_{n+1,\varepsilon}^{i;d} - V_{n+1}^{i;d} \|_{2,\rho_{n+1;d}} \le \varepsilon, \; \; \; \emph{(ii)}. \; \size(\hat{V}_{n+1,\varepsilon}^{i;d}) +  \Growth(\hat{V}_{n+1,\varepsilon}^{i;d}) \le c_{n+1} d^{q_{n+1}} \varepsilon^{-\tau_{n+1}} .
\]
\end{theorem}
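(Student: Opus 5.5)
We argue by backward induction on \(n\), from \(n+1=N\) down to \(n+1=1\), following the multi-level Monte Carlo/random-network construction of \cite{gonon23} and \cite[Theorem~4.15]{ye2025deepmartingale}. The base case \(V_N^{i;d}=\Phi^{i;d}\) is immediate from Assumption~\ref{ass:other_new}. We take \(\hat V_{N,\varepsilon}^{i;d}:=\hat\Phi_{\varepsilon'}^{i;d}\) with \(\varepsilon'=\varepsilon/(c d^q(1+k_N d^{p_N}))\). Since \(\bar p\ge2\), the pointwise error \(\varepsilon' cd^q(1+\|x\|)\) integrates in \(L^2(\rho_{N;d})\) to at most \(\varepsilon\). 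The size and growth bounds are then of the form \(c_N d^{q_N}\varepsilon^{-\tau_N}\).

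For the inductive step, fix \(\rho_{n;d}\) with \(\mathbb M_{\bar p}(\rho_{n;d})\le k_n d^{p_n}\), and use the primal DPP \eqref{eq:primal_DPP_value_function}. For each \(j\) we approximate the inner term
\[
g^j(x):=\E\Big[\int_{t_n}^{t_{n+1}} f^{j;d}(s,X_s^{t_n,x;d})\,ds+V_{n+1}^{j;d}(X_{t_{n+1}}^{t_n,x;d})\Big].
\]
First we discretize the time integral by a left Riemann sum on \(L\) points \(s_\ell\). Lemma~\ref{lem:Lip_Hol_lem_f_i} and the \(\sqrt{t-s}\) modulus in Assumption~\ref{ass:dynamic_ass}(a) bound the discretization error by \(cd^q(1+\|x\|)L^{-1/2}\). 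Next we replace \(f^{j;d}\) by \(\hat f^{j;d}_{\varepsilon}\) and \(V^{j;d}_{n+1}\) by the inductive network \(\hat V^{j;d}_{n+1,\varepsilon}\). The induction hypothesis is applied with the pushforward measure \(\rho_{n+1;d}:=\int \P(X_{t_{n+1}}^{t_n,x;d}\in\cdot)\,\rho_{n;d}(dx)\). Its \(\bar p\)-moment is controlled by the growth bound in Assumption~\ref{ass:dynamic_ass}(a) (with \(p\ge\bar p\)), which is exactly why \(k_{n+1}=c(1+k_n)\) and \(p_{n+1}=p_n+q\). We then replace the expectation by a Monte Carlo average over \(R\) i.i.d. copies of the RanNN flows \(\hat P_{t_n}^{s_\ell;d}\) and \(\hat P_{t_n}^{t_{n+1};d}\). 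A Fubini argument bounds the expected squared \(L^2(\rho_{n;d})\)-error by \(R^{-1}\) times second moments. Those second moments are polynomial in \(d\) by the growth bounds of \(\hat f,\hat V_{n+1}\) and the flow. Finally, Markov's inequality applied simultaneously to the error and to the expected size (Assumption~\ref{ass:dynamic_ass}(c)) gives a realization \(\omega\) for which both the error and the size are polynomial. That fixed realization composes into a deterministic deep ReLU network \(\hat g^j\). Compositions and sums are realized with additive size; the depth is synchronized via identity padding, using the depth bound in Assumption~\ref{ass:dynamic_ass}(b).

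Then we set
\[
\hat V_{n,\varepsilon}^{i;d}(x):=\max_{j\in\mathcal J}\big(\hat g^j(x)-\hat l^{n;d}_{ij,\varepsilon}(x)\big),
\]
realized through Lemma~\ref{lem:nn_realization_maximum}, which adds only \(7(J-1)\) to the size. Since \(|\max_j a_j-\max_j b_j|\le\max_j|a_j-b_j|\le\sum_j|a_j-b_j|\), the \(L^2(\rho_{n;d})\) error is at most \(J\) times the individual errors. Choosing \(L\sim\varepsilon^{-2}\), \(R\sim\varepsilon^{-2}\) (times polynomials in \(d\)), and inner tolerances \(\varepsilon/\mathrm{poly}(d)\) gives (i). For (ii), size grows like \(J R L\cdot(\text{size of }\hat f,\hat V_{n+1},\hat P)\), which is of the form \(c_n d^{q_n}\varepsilon^{-\tau_n}\) with \(\tau_n\) depending on \(\tau_{n+1}\) and \(r\). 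The growth of \(\hat V_n\) is bounded by averaging the growth bounds of the components. Here we must use the \(\omega\)-selection step to also keep \(\Growth(\hat P)\) controlled, or alternatively bound the growth pointwise through Assumption~\ref{ass:dynamic_ass}(a) in \(L^p\) and include it in the Markov selection.

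We expect the main obstacle to be the random-network selection. We need a single \(\omega\) for which the approximation error, the network size, and the growth constant are all simultaneously polynomially bounded. The growth of \(\hat V_{n+1}\circ\hat P\) in \(x\) is random and enters the next induction level. We would handle this as in \cite{gonon23}: bound the expectations of all three quantities, then apply Markov's inequality to each with a factor \(4\), so that the union of the three bad events has probability below \(1\). A secondary point is verifying that the error of \(\hat V_{n+1}\) measured in \(L^2(\rho_{n+1;d})\) transfers correctly under the random flow. This works because the squared error is linear in the measure, so its expectation over \(\omega\) equals the error under the pushforward \(\rho_{n+1;d}\).
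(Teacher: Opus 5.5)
Your proposal is correct and follows essentially the same route as the paper's proof. Both argue by backward induction from \(\hat V_{N,\varepsilon}^{i;d}=\hat\Phi^{i;d}_{\bar\varepsilon}\), push \(\rho_{n;d}\) forward through the flow to invoke the induction hypothesis, use a left Riemann quadrature for the running payoff, replace expectations by Monte Carlo averages over RanNN flow copies with a Markov-type selection of \(\omega_0\), and apply Lemma~\ref{lem:nn_realization_maximum} for the regime maximum; the paper merely builds the continuation value and the running-payoff quadrature as two separate networks \(\gamma_{n,\delta}^{j;d}\) and \(\lambda_{n,\delta}^{j;d}\) instead of your single combined \(\hat g^j\).

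One bookkeeping slip: the Monte Carlo second moments carry the growth factors \(\delta^{-\tau_{n+1}}\) and \(\delta^{-r}\) of the inner networks \(\hat V_{n+1,\delta}^{j;d}\) and \(\hat f_\delta^{j;d}\). So the sample size must be of order \(\mathrm{poly}(d)\,\varepsilon^{-2-2\max\{\tau_{n+1},r\}}\) rather than \(\varepsilon^{-2}\) (compare the paper's choice \(\lceil Cd^\alpha\delta^{-2-2\tau}\rceil\)), which only enlarges the exponent \(\tau_n\).
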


Using \Cref{theorem:recursive_express_new}, we next approximate the discrete martingale integrands. For \(i\in\mathcal J\), \(n\in\N^{-1}\), \(K\in\mathbb N_+\), let \(z_{n,k}^{i;K,d}:\mathbb R^d\to\mathbb R^d\), \(k \in \K \), satisfy
\(
z_{n,k}^{i;K,d}(X_{t_k^n}^{d})=\hat Z_{t_k^n}^{i;K,d},
\; \mathbb P\text{-a.s.},
\)
and define
\[
z_n^{i;K,d}(t,x):=\sum_{k=0}^{K-1} z_{n,k}^{i;K,d}(x)\,1_{[t_k^n,t_{k+1}^n)}(t),
\quad (t,x)\in[t_n,t_{n+1})\times\mathbb R^d.
\]
As in \cite[Section~4.2.1]{ye2025deepmartingale}, define
\[
\mu_n^{K;d}(A)
:=
\E\!\Big[\sum_{k=0}^{K-1}1_A(t_k^n,X_{t_k^n}^{d})\,\Delta t \Big],
\quad
A\in\mathcal B(\mathbb R^{1+d}) .
\]
Since the proof argument follows \cite[Theorem~4.26-4.28]{ye2025deepmartingale}, we omit it here.

\begin{theorem}[Integrand approximation \& realization]\label{theorem:joint nerual network realization_new}
Under Assumption~\ref{ass:dynamic_ass} with \(p>4\) and Assumption~\ref{ass:other_new}, for each \(n\in\N^{-1}\) there exist \(\bar c_n,\bar q_n,\bar\tau_n,\bar m_n\ge 1 \), independent of \(d\), such that for every \(K\in\mathbb N_+\), \(\varepsilon\in(0,1]\), and \(i\in\mathcal J\), there exists a deep ReLU network \(\tilde z_{n,\varepsilon}^{i;K,d}:\mathbb R^{1+d}\to\mathbb R^d\) satisfying

\vspace{0.2em}
\emph{(i)}. 
\(
\|\tilde z_{n,\varepsilon}^{i;K,d}-z_n^{i;K,d}\|_{2,\mu_n^{K;d}}\le \varepsilon ;
\)

\emph{(ii)}. for all \(t\in[t_n,t_{n+1})\),
\(
\size(\tilde z_{n,\varepsilon}^{i;K,d})
+\Growth(\tilde z_{n,\varepsilon}^{i;K,d}(t,\cdot))
\le \bar c_n d^{\bar q_n}K^{\bar m_n}\varepsilon^{-\bar\tau_n}.
\)
\end{theorem}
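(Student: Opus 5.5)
The plan is to follow the architecture of \cite[Theorems~4.26--4.28]{ye2025deepmartingale}, interval by interval. Fix $n\in\N^{-1}$ and $i\in\mathcal J$. By \eqref{eq_Z} and the Markov property, $z_{n,k}^{i;K,d}(x)=\frac1{\Delta t}\E\big[V_{n+1}^{i;d}(P_{t_k^n}^{t_{n+1};d}(x,\cdot))\,\Delta W_{t_k^n}\big]$.

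\textbf{Step 1: replace the value function by a network.} Apply \Cref{theorem:recursive_express_new} to replace $V_{n+1}^{i;d}$ by the network $\hat V_{n+1,\delta}^{i;d}$. The reference measure $\rho_{n+1;d}$ is taken to be the law of $X_{t_{n+1}}^d$. Its $\bar p$-th moment is polynomially bounded in $d$ by Assumption~\ref{ass:dynamic_ass}(a), iterated from $x$ through the flows, which is exactly the $k_{n+1}d^{p_{n+1}}$ growth pattern allowed in \Cref{theorem:recursive_express_new}.

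By conditional Cauchy--Schwarz, $\E\|\hat Z-\tilde Z\|^2\le \frac{d}{\Delta t}\E|V-\hat V|^2(X_{t_{n+1}})$. Summing against $\mu_n^{K;d}$ gives
\[
\|z_n^{i;K,d}-\tilde z\|_{2,\mu_n^{K;d}}^2\le K d\,\|V_{n+1}^{i;d}-\hat V_{n+1,\delta}^{i;d}\|_{2,\rho_{n+1;d}}^2 .
\]
Hence the choice $\delta=\varepsilon/(2\sqrt{Kd})$ costs only a polynomial factor in $K$ and $d$, absorbed into $K^{\bar m_n}d^{\bar q_n}$.

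\textbf{Step 2: Monte Carlo over Brownian increments.} For each subgrid point $k$, write the conditional expectation $x\mapsto\frac1{\Delta t}\E[\hat V(\hat P_{t_k^n}^{t_{n+1};d}(x,\omega))\Delta W_{t_k^n}(\omega)]$, using the RanNN realization of Assumption~\ref{ass:dynamic_ass}(b). Approximate it by an empirical average over $L$ i.i.d. copies $\omega_1,\dots,\omega_L$.

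The error in $L^2(\rho)$, with $\rho$ the law of $X_{t_k^n}^d$, is bounded by the variance divided by $L$. This variance is controlled by growth bounds: $\Growth(\hat V)$ from \Cref{theorem:recursive_express_new}(ii), the $p$-th moment of the flow growth from Assumption~\ref{ass:dynamic_ass}(a), and Hölder's inequality with $\|\Delta W\|_{L^q}$. This is where $p>4$ is needed: the product of $\Growth$ of the flow, $(1+\|x\|)$ integrated against $\rho$, and $\Delta W$ must be square integrable with polynomial constants.

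A probabilistic-method argument then yields a realization $\omega_1,\dots,\omega_L$ for which three things hold simultaneously: the $L^2(\rho)$ error is at most $\varepsilon/\sqrt{2K}$, the realized network sizes are at most a constant multiple of their expectations (Assumption~\ref{ass:dynamic_ass}(c), via Markov's inequality), and the realized growth is controlled likewise. Each summand is a composition of a ReLU network with a fixed-realization flow network, multiplied by a fixed vector $\Delta W(\omega_l)/\Delta t$, which is an affine output layer. So the average is a ReLU network of size $O(L\cdot\text{poly})$ with $L\sim\text{poly}(d,K)\varepsilon^{-2}$.

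\textbf{Step 3: glue in time and account for the obstacle.} The time-piecewise function $\sum_k z_{n,k}(x)1_{[t_k^n,t_{k+1}^n)}(t)$ is only evaluated at the grid times $t_k^n$ under $\mu_n^{K;d}$. It therefore suffices to realize the selection $(t,x)\mapsto z_{n,k}(x)$ for $t\in[t_k^n,t_{k+1}^n)$. This can be done as in \cite[Theorem~4.28]{ye2025deepmartingale}: build ReLU indicator-like gates in $t$ that are exact on the grid points, and multiply them with the $x$-networks via the standard bounded-output switching trick (e.g.\ $\min/\max$ clipping with a large constant depending on $\Growth$). This gives size $K$ times the per-$k$ size plus $O(K)$.

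I expect the main obstacle to be Step 2's simultaneous control of error, size and growth with constants polynomial in $d$. In particular, the growth of $\hat V$ itself carries a factor $\delta^{-\tau_{n+1}}$, which feeds into the variance and hence into $L$. Tracking these exponents gives the final $\bar\tau_n$ and $\bar m_n$.
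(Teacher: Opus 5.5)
Your architecture is the one the paper relies on. The paper simply defers to \cite[Theorems~4.26--4.28]{ye2025deepmartingale}, with \Cref{theorem:recursive_express_new} supplying $\hat V_{n+1,\delta}^{i;d}$. Steps 1 and 2 are sound as described:
\begin{itemize}
\item taking $\rho_{n+1;d}$ to be the law of $X_{t_{n+1}}^d$, which is the same for every $k$;
\item the bound $\|z_n^{i;K,d}-\tilde z\|_{2,\mu_n^{K;d}}^2\le Kd\,\|V_{n+1}^{i;d}-\hat V_{n+1,\delta}^{i;d}\|_{2,\rho_{n+1;d}}^2$;
\item the joint Monte Carlo over the pairs $(\hat P_{t_k^n}^{t_{n+1};d},\Delta W_{t_k^n})$, with a probabilistic choice of the realization.
\end{itemize}

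The step that fails as written is Step 3. A deep ReLU network $F:\R^{1+d}\to\R^d$ is globally Lipschitz, so
\[
\sup_x\|F(t_k^n,x)-F(t_j^n,x)\|\le\Lip(F)\,|t_k^n-t_j^n|<\infty .
\]
Your Monte Carlo networks $\tilde z_{n,k}$, however, have linear growth with, in general, different asymptotic slopes, so $\tilde z_{n,k}-\tilde z_{n,j}$ is unbounded. Hence no ReLU network can reproduce all of them exactly on the grid slices, and the ``bounded-output switching trick'' does not apply because these outputs are not bounded. Concretely, take hat functions $g_k$ with $g_k(t_j^n)=1_{\{j=k\}}$ and clip at level $\Lambda g_k(t)$. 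At $t=t_k^n$ the glued network returns $\mathrm{clip}(\tilde z_{n,k}(x);\Lambda)$, which equals $\tilde z_{n,k}(x)$ only on a bounded set of $x$. Your error bookkeeping therefore omits a truncation term.

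The repair is short, but it has to be made. You must bound
\[
\sum_{k=0}^{K-1}\Delta t\,\E\Big[\big\|\tilde z_{n,k}(X_{t_k^n}^d)\big\|^2\,1_{\{\|\tilde z_{n,k}(X_{t_k^n}^d)\|_\infty>\Lambda\}}\Big].
\]
This tends to $0$ as $\Lambda\to\infty$ by dominated convergence, since $\|\tilde z_{n,k}(x)\|\le\Growth(\tilde z_{n,k})(1+\|x\|)$ and $X_{t_k^n}^d$ has second moments.

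The choice of $\Lambda$ costs nothing in (ii). $\size$ counts nonzero weights, not their magnitude, and clipping at a nonnegative level never increases the norm. At any $t\in[t_n,t_{n+1})$ at most two gates are active, so $\Growth(\tilde z(t,\cdot))\le 2\max_k\Growth(\tilde z_{n,k})$. With $\Lambda$ large enough, reallocate $\varepsilon$ among the three error sources.

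Incidentally, your account of where $p>4$ is needed is not right. The variance estimate in Step 2 needs only $p>2$, via H\"older against the Gaussian increment. Assuming more integrability does no harm, though.
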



We can now state our main expressivity result for \textit{DeepMartingales}.

\begin{theorem}[DeepMartingale expressivity]\label{thm:express_deep_mtg}
Under the Assumption~\ref{ass:N_0_structural_ass_model_deter}, Assumption~\ref{ass:dynamic_ass} with \(p>4\),  Assumption~\ref{ass:other_new}, there exist constants \(\widetilde c,\widetilde q,\widetilde r>0\), independent of \(d\), such that for any \(\varepsilon\in(0,1]\), there exist \(K_{\varepsilon;d}\in\mathbb N_+\) with
\(
K_{\varepsilon;d}\le \widetilde c\, d^{\widetilde q}\varepsilon^{-2},
\), deep ReLU networks \(\tilde z_{n,\varepsilon}^{i;d}:\mathbb R^{1+d}\to\mathbb R^d\), \(n\in\N^{-1}\), \(i\in\mathcal J\), 
such that for \textit{DeepMartingales}
\[
\widetilde M_{t_n,\varepsilon}^{i;d}
:=
\sum_{m=0}^{n-1}\sum_{k=0}^{K_{\varepsilon;d}-1}
\tilde z_{m,\varepsilon}^{i;d}(t_k^m,X_{t_k^m}^{d})\cdot \Delta W_{t_k^m}^{d},
\quad n\in\overline N,
\]
and
\(
\widetilde M_{\varepsilon}^{i;d}:=(\widetilde M_{t_n,\varepsilon}^{i;d})_{n\in\overline N},
\; 
\widetilde M_\varepsilon^d:=(\widetilde M_{\varepsilon}^{i;d})_{i\in\mathcal J},
\)
we have, for every \(n\in\N^{-1}\),

\vspace{0.1em}
\emph{(i)}. 
\(
\Big(\E\big[\max_{i\in\mathcal J}
|\widetilde U_n^{i;d}(\widetilde M_\varepsilon^d)-\overline Y_{t_n}^{i;d}|^2\big]\Big)^{\frac{1}{2}}
\le J(N-n)\varepsilon ;
\)

\emph{(ii)}.  for all \(i\in\mathcal J\) and \(t\in[t_n,t_{n+1})\),
\(
\size(\tilde z_{n,\varepsilon}^{i;d})
+\Growth(\tilde z_{n,\varepsilon}^{i;d}(t,\cdot))
\le \widetilde c\, d^{\widetilde q}\varepsilon^{-\widetilde r}.
\)
\end{theorem}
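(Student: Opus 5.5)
The argument splits into three pieces: the error propagation bound of \Cref{lem:error-propagate}, the numerical-integration expressivity of \Cref{thm:express_N_0_new}, and the integrand realization of \Cref{theorem:joint nerual network realization_new}. Strong duality (\Cref{thm:stong_duality}) gives $\overline Y_{t_n}^{i;d}=\widetilde U_n^{i;d}(\overline M^d)$ pathwise. Iterating \Cref{lem:error-propagate} from $n$ up to $N$, and using $\widetilde U_N^i(\cdot)=\Phi^i$, yields
\[
\max_{i\in\mathcal J}\big|\widetilde U_n^{i;d}(\widetilde M^d_\varepsilon)-\overline Y^{i;d}_{t_n}\big|
\le \sum_{m=n}^{N-1}\sum_{i\in\mathcal J}\big|\Delta\widetilde M^{i;d}_{t_m,\varepsilon}-\Delta\overline M^{i;d}_{t_m}\big| .
\]
Taking $L^2$ norms and applying Minkowski, it suffices to show $\|\Delta\widetilde M^{i;d}_{t_m,\varepsilon}-\Delta\overline M^{i;d}_{t_m}\|_{L^2}\le\varepsilon$ for each $m$ and $i$. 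Summing this over $J(N-n)$ terms then gives (i).

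For each such increment I would split
\[
\Delta\widetilde M_{t_m,\varepsilon}^{i;d}-\Delta\overline M_{t_m}^{i;d}
=\bigl(\Delta\widetilde M_{t_m,\varepsilon}^{i;d}-\Delta\hat M^{i;K}_{t_m}\bigr)
+\bigl(\Delta\hat M^{i;K}_{t_m}-\Delta\overline M_{t_m}^{i;d}\bigr).
\]
For the second term, apply the It\^o isometry to $\int(\overline Z_s-\hat Z_{t_k^m})\,dW_s$. By \Cref{thm:express_N_0_new} with accuracy $\varepsilon^2/4$, there is a $K=K_{\varepsilon;d}\le 4b^*d^{q^*}\varepsilon^{-2}$ making this term at most $\varepsilon/2$ in $L^2$. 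This is the source of the $\varepsilon^{-2}$ rate for $K$; it requires Assumption~\ref{ass:N_0_structural_ass_model_deter}, whose hypotheses on $V_{n}^{i;d}$ are supplied by \Cref{lem:V_Lip_growth}.

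For the first term, with this $K$ fixed, the difference is a discrete stochastic integral of $\tilde z^{i;K,d}_{m,\varepsilon'}(t_k^m,X_{t_k^m})-z^{i;K,d}_m(t_k^m,X_{t_k^m})$. Because the integrands are $\mathcal F_{t_k^m}$-measurable, its $L^2$ norm squared equals $\E\sum_k\|\cdot\|^2\Delta t=\|\tilde z-z\|^2_{2,\mu_m^{K;d}}$. \Cref{theorem:joint nerual network realization_new} with $\varepsilon'=\varepsilon/2$ bounds this by $\varepsilon/2$. It also gives
\[
\size+\Growth\le\bar c_m d^{\bar q_m}K^{\bar m_m}(\varepsilon/2)^{-\bar\tau_m}\le \widetilde c\,d^{\widetilde q}\varepsilon^{-\widetilde r},
\]
after inserting $K\le 4b^*d^{q^*}\varepsilon^{-2}$ and maximizing the constants over the finitely many $m\in\N^{-1}$. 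This proves (ii).

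The step I expect to need the most care is the consistency of the objects involved. The $\hat Z^{i;K}$ of \eqref{eq_Z} must coincide with $z_{n,k}^{i;K,d}(X_{t_k^n})$, which uses the Markov property of $\overline Y^i_{t_{n+1}}=V^{i;d}_{n+1}(X_{t_{n+1}})$ together with the flow property. Also, the martingale increments of $\overline M$ on $[t_m,t_{m+1}]$ must be exactly $\int\overline Z\,dW$, so that \Cref{lem:error-propagate} applies with $\xi^i_m$ being precisely these increments. The remaining steps are direct bookkeeping of constants.
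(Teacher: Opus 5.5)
Your proposal is correct and follows essentially the same route as the paper. You choose $K_{\varepsilon;d}$ from \Cref{thm:express_N_0_new} at accuracy $\varepsilon^2/4$ and realize the integrands via \Cref{theorem:joint nerual network realization_new} at accuracy $\varepsilon/2$, then combine the two $L^2$ errors through the iterated \Cref{lem:error-propagate} and $\overline Y_{t_n}^{i;d}=\widetilde U_n^{i;d}(\overline M^d)$; your explicit split through $\hat M^{K}$ with the It\^o and discrete isometries just unpacks the single display in the paper's proof, and your bound $K_{\varepsilon;d}\le 4b^*d^{q^*}\varepsilon^{-2}$ is the correct one (the paper's written factor $1/4$ is a slip, harmless after absorbing into $\widetilde c$).
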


\vspace{0.5em}
\paragraph{Expressivity Example: Affine It\^o Diffusion}
Affine It\^o diffusions provide a standard class of dynamics covered by our framework; see \cite{ye2025deepmartingale,Jentzen23} and \Cref{subsec:supplement-affine-ito-diffusion} in the Supplementary Materials for auxiliary estimates.

\begin{definition}[Affine It\^o diffusion]\label{def:AID}
If \(X^{d}\) is the unique strong solution of \eqref{eq:SDE}, 
we call \(X^{d}\) an affine It\^o diffusion if \(\mu^d\) and \(\sigma^d\) are affine, i.e., there exist
\(
A_\mu^d\in\R^{d\times d},\; b_\mu^d\in\R^d,\;
A_\sigma^{k;d}\in\R^{d\times d},\; b_\sigma^{k;d}\in\R^d,\; k=1,\dots,d,
\)
such that
\[
dX_t^d=(A_\mu^d X_t^d+b_\mu^d)\,dt+\sum_{k=1}^d (A_\sigma^{k;d}X_t^d+b_\sigma^{k;d})\,dW_t^{k;d}.
\]
\end{definition}

We impose the following Lipschitz and growth rate conditions.

\begin{as}\label{def:ADI_express}
Assume that \(X^d\) satisfies \Cref{def:AID}. Moreover, there exist \(c_a,q_a>0\) independent of $d$, such that,
\begin{itemize}
    \item[(i)] \( \|A_{\mu}^{d}\|_H \le c_{a}(\log d)^{\frac{1}{2}}\) and
    \(
    \|A_{\sigma}^{d}\|_2^2:=\sum_{k=1}^d \|A_{\sigma}^{k;d}\|_2^2 \le c_{a}(\log d)^{\frac{1}{2}};
    \)
    \item[(ii)] \( \|b_{\mu}^{d}\| \le c_{a} d^{q_a}\) and \( \|b_{\sigma}^{d}\|_{\H}\le c_{a} d^{q_a}\), where \(b_\sigma^d =(b_\sigma^{1;d},\dots,b_\sigma^{d;d})\).
\end{itemize}
\end{as}

Assumption~\ref{def:ADI_express} covers, e.g., Geometric Brownian Motion, Ornstein--Uhlenbeck dynamics; see \cite[Remark~4.34]{ye2025deepmartingale}. The \textit{DeepMartingales} expressivity result for affine It\^o diffusions then follows by an argument analogous to \cite[Proof of Theorem~4.36]{ye2025deepmartingale}, combined with \Cref{lemma:AID-log_ass1} in Supplementary Materials and Remark~\ref{rem:reward_ass_express_to_ass_integration}. We therefore omit the proof.

\begin{corollary}[DeepMartingale expressivity for affine It\^o diffusion]\label{thm:AID-log_express}
If \(X^d\) satisfies Assumption~\ref{def:ADI_express}, and \(f^{i;d},\Phi^{i;d},l_{ij}^d\), \(i,j\in\mathcal J\), satisfy Assumption~\ref{ass:other_new}, then Theorem~\ref{thm:express_deep_mtg} holds.
\end{corollary}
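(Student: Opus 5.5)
The statement to prove is Corollary~\ref{thm:AID-log_express}. The plan is to verify, for affine It\^o diffusions under Assumption~\ref{def:ADI_express}, the three hypotheses of Theorem~\ref{thm:express_deep_mtg}: Assumption~\ref{ass:N_0_structural_ass_model_deter}, Assumption~\ref{ass:dynamic_ass} with some $p>4$, and Assumption~\ref{ass:other_new}. The last is given. Theorem~\ref{thm:express_deep_mtg} then applies verbatim.

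\emph{Step 1: coefficients.} Since $\mu^d(t,x)=A_\mu^d x+b_\mu^d$ and $\sigma^d(t,x)=(A_\sigma^{k;d}x+b_\sigma^{k;d})_k$ are time-homogeneous, their $1/2$-H\"older constants in time vanish. Moreover,
\[
\Lip\mu^d\le\|A_\mu^d\|_2\le\|A_\mu^d\|_{\H}\le c_a(\log d)^{1/2},
\qquad
\LipH\sigma^d\le\|A_\sigma^d\|_2\le c_a^{1/2}(\log d)^{1/4}.
\]
The values at the origin are $\|b_\mu^d\|$ and $\|b_\sigma^d\|_{\H}$, both bounded by $c_ad^{q_a}$. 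Hence $F_1^d$ satisfies Assumption~\ref{ass:N_0_structural_ass_model_deter}. For $F_2^d=f^{i;d}$ and for $G^d\in\{\Phi^{i;d},l_{ij}^d(t_n,\cdot)\}$, Remark~\ref{rem:reward_ass_express_to_ass_integration} and Lemma~\ref{lem:Lip_Hol_lem_f_i} transfer the required Lipschitz and growth bounds from Assumption~\ref{ass:other_new}, via the $\varepsilon\to0$ limit of the network approximants.

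\emph{Step 2: stochastic flow.} This is the main step. I would invoke \Cref{lemma:AID-log_ass1} from the Supplementary Materials, modeled on \cite[Proof of Theorem~4.36]{ye2025deepmartingale}, for the following facts.

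\begin{enumerate}
\item[(a)] The affine flow is linear in $x$: $P_s^{t;d}(x,\omega)=\mathcal Y_s^t(\omega)x+\mathcal Z_s^t(\omega)$. Here $\mathcal Y$ is the matrix-valued solution of the homogeneous linear SDE and $\mathcal Z$ the solution of the inhomogeneous one.
\item[(b)] Gronwall and BDG estimates give $\|\mathcal Y_s^t\|_{L^p}\le C\exp\bigl(C(\|A_\mu^d\|+\|A_\sigma^d\|_2^2)\bigr)$. The $\log d$ scaling in Assumption~\ref{def:ADI_express}(i) turns this exponential into a polynomial $cd^{q}$. Similarly $\|\mathcal Z\|_{L^p}\le cd^{q}$, which yields the growth bound in Assumption~\ref{ass:dynamic_ass}(a) for every $p$, in particular some $p>4$.
\item[(c)] The time increment bound $\E\|P_{t_n}^{t}-P_{t_n}^{s}\|\le cd^q(1+\|x\|)\sqrt{t-s}$ follows from the standard moment estimate for SDE increments with these coefficient bounds.
\item[(d)] For (b)--(c) of Assumption~\ref{ass:dynamic_ass}, the map $x\mapsto\mathcal Y x+\mathcal Z$ is a one-layer random ReLU network of depth $1$. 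Its size is at most $d^2+d$, giving the RanNN realization with expected size at most $cd^q$.
\end{enumerate}

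\emph{Step 3.} With all hypotheses verified, the conclusion of Theorem~\ref{thm:express_deep_mtg} holds with constants $\widetilde c,\widetilde q,\widetilde r$ independent of $d$. This gives both the error bound $J(N-n)\varepsilon$ and the size/growth bound.

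The expected obstacle is Step~2(b): keeping the $L^p$ bound on $\mathcal Y$ polynomial in $d$. This requires tracking that the exponent depends only on $\|A_\mu^d\|_{\H}$ and $\|A_\sigma^d\|_2^2$, and applying Assumption~\ref{def:ADI_express}(i) so that the exponent is $O(\log d)$.
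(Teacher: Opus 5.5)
Your proposal is correct and follows the paper's route: the dynamics hypotheses (Assumption~\ref{ass:N_0_structural_ass_model_deter} for $\mu^d,\sigma^d$, and Assumption~\ref{ass:dynamic_ass} with $p>4$) come from \Cref{lemma:AID-log_ass1}, which you sketch via the affine flow $x\mapsto\mathcal Y_s^t x+\mathcal Z_s^t$; the reward hypotheses come from Remark~\ref{rem:reward_ass_express_to_ass_integration}; and Theorem~\ref{thm:express_deep_mtg} is then applied verbatim. One caveat, which the paper's Remark shares: the $\varepsilon\to0$ argument of Lemma~\ref{lem:Lip_Hol_lem_f_i} needs an $\varepsilon$-uniform Lipschitz bound on the approximants, and Assumption~\ref{ass:other_new} supplies this only for $\hat f_\varepsilon^{i;d}$, so the Lipschitz bounds on $\Phi^{i;d}$ and $l_{ij}^d(t_n,\cdot)$ in Assumption~\ref{ass:N_0_structural_ass_model_deter}(ii) implicitly require the analogous bound on $\hat\Phi_\varepsilon^{i;d}$ and $\hat l_{ij,\varepsilon}^{n;d}$.
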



\subsection{Connection with ``Delta''}\label{subsec:delta_deephedge}

Dual martingale methods are closely related to delta hedging and delta risk; see, e.g., \cite{belome09,roger10,puredual-mf}. In our setting, this relation is immediate from the continuation and Doob martingales representation from \eqref{eq:BSDE_system}. 

\begin{proposition}[Delta representation]\label{lem:delta_deephedge}
Fix \(i\in\mathcal J\) and \(n\in\N^{-1}\), and define
\(
\widetilde u_n^{i;d}(t,x):=\E\big[V_{n+1}^{i;d}(X_{t_{n+1}}^{t,x;d})\big],
\; (t,x)\in[t_n,t_{n+1}]\times\R^d.
\)
If \(\widetilde u_n^{i;d}\in C^{1,2}([t_n,t_{n+1}]\times\R^d)\), then
\(
\overline Z_t^{i;d}
=
(\nabla_x \widetilde u_n^{i;d}\,\sigma^d)(t,X_t^{t_n,x;d}),
\; t\in[t_n,t_{n+1}],
\)
where \(\overline Z^{i;d}\) is the Doob martingale integrand in \eqref{eq:BSDE_system}. If, in addition, \(\sigma^d(t,x)\) is invertible, then the delta hedge ratio is
\[
\Pi_t^{i,n;d}
:=
(\sigma^d)^{-1}(t,X_t^{t_n,x;d})\,\overline Z_t^{i;d}
=
\nabla_x \widetilde u_n^{i;d}(t,X_t^{t_n,x;d}).
\]
\end{proposition}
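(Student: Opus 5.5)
The plan is to identify the Doob martingale on $[t_n,t_{n+1}]$ with the martingale $t\mapsto \widetilde u_n^{i;d}(t,X_t^{t_n,x;d})$, apply It\^o's formula to it, and then read off the integrand from the uniqueness in the martingale representation theorem.

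\emph{Step 1: martingale identification.} By the Markov property of the strong solution \eqref{eq:SDE}, for $t\in[t_n,t_{n+1}]$ we have
\(
\E_t\big[V_{n+1}^{i;d}(X_{t_{n+1}}^{t_n,x;d})\big]=\widetilde u_n^{i;d}(t,X_t^{t_n,x;d})
\)
a.s. Comparing with the BSDE in \eqref{eq:BSDE_system}, this says $\widetilde Y_t^{i,t_n,x;d}=\widetilde u_n^{i;d}(t,X_t^{t_n,x;d})$. Moreover, $\widetilde Y^{i}_t-\widetilde Y^{i}_{t_n}=\int_{t_n}^t\overline Z^{i;d}_s\,dW_s$. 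Integrability follows from $V_{n+1}^{i;d}(X_{t_{n+1}})\in L^2$.

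\emph{Step 2: It\^o's formula.} Since $\widetilde u_n^{i;d}\in C^{1,2}$, It\^o's formula gives
\[
d\,\widetilde u_n^{i;d}(t,X_t)=\big(\partial_t\widetilde u_n^{i;d}+\mathcal L\widetilde u_n^{i;d}\big)(t,X_t)\,dt+(\nabla_x\widetilde u_n^{i;d}\,\sigma^d)(t,X_t)\,dW_t ,
\]
where $\mathcal L$ is the generator of $X$. The left-hand side is a martingale by Step~1. Hence the finite-variation part is a continuous local martingale of finite variation, and so vanishes identically.

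\emph{Step 3: uniqueness of the integrand.} We now have two stochastic-integral representations of the same continuous martingale. The difference $\int_{t_n}^t\big(\overline Z_s^{i;d}-(\nabla_x\widetilde u_n^{i;d}\sigma^d)(s,X_s)\big)dW_s$ is a continuous local martingale that vanishes identically. Its quadratic variation is therefore zero, which gives $\overline Z^{i;d}=\nabla_x\widetilde u_n^{i;d}\sigma^d(\cdot,X)$ $dt\otimes d\P$-a.e.

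\emph{Main obstacle.} I expect the only delicate point to be integrability: the It\^o integral of $\nabla_x\widetilde u\,\sigma$ is a priori only a local martingale. This is handled by localizing with stopping times, or more simply by the argument above, since a local martingale that equals a true martingale minus a finite-variation term forces the drift to vanish. No further regularity is needed.

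\emph{Step 4: hedge ratio.} The formula for $\Pi_t^{i,n;d}$ then follows directly. Here $\nabla_x\widetilde u$ is a row vector and $\overline Z$ is identified with its transpose as in $\overline Z\cdot dW$. Multiplying by $(\sigma^d)^{-1}$ (with the appropriate transpose convention) recovers $\nabla_x\widetilde u_n^{i;d}(t,X_t)$.
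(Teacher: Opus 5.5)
Your proposal is correct and follows essentially the same route as the paper. Both arguments use the Markov property to identify $\widetilde u_n^{i;d}(t,X_t^{t_n,x;d})$ with the continuation martingale, apply It\^o's formula, argue that the drift vanishes, and read off the integrand as $\overline Z^{i;d}$. You also spell out two steps the paper leaves implicit: that a continuous finite-variation local martingale is constant, and that the integrand is unique via quadratic variation, which gives equality $dt\otimes d\P$-a.e.
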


Hence the \textit{DeepMartingale} integrand \(z_n^{i,\theta_n^i;d}\) may be viewed as a deep delta hedge, namely
\(
(\sigma^d)^{-1}(t,X_t^{t_n,x;d})\,z_n^{i,\theta_n^i;d}(t,X_t^{t_n,x;d}),
\)
whenever \(\sigma^d(t,x)\) is invertible.


\section{Numerical Experiments}\label{sec:numerical}



We first describe the implementation of the \textit{DeepMartingale} dual solver and then present two benchmark studies. Throughout this section, we use \textit{DeepPD} to denote the overall numerical framework consisting of this dual solver together with an auxiliary deep policy-based approach for computing feasible lower bounds and empirical upper--lower gaps.

\subsection{DeepPD: dual implementation and lower bound benchmark}




On the dual side, we train the \textit{DeepMartingale} family $M^{\theta;K}$ from \eqref{deep_mart}. A key computational feature is that we optimize the dual problem only for one chosen reference regime $i_0 \in \mathcal J$, which empirically already yields accurate upper bounds for all regimes. Heuristically, this is consistent with our duality theory, since the Doob martingales are simultaneously optimal martingale penalties across regimes. This reference-regime training substantially reduces the computational burden while preserving the quality of the resulting upper bounds in our experiments.

The expressivity results in Section~\ref{subsec:converge_express_ReLU} also motivate a dimension scaling mechanism, which we do not elaborate on here for brevity; see \cite[Section~5.1.3]{ye2025deepmartingale} for the analogous stopping case.

For numerical benchmark, we also compute feasible lower bounds by implementing a deep policy-based approach adapted from the idea of \cite[\textit{Deep Impulse Control}]{Jia-Wong01022024} within our primal dynamic programming principle. Concretely, we parameterize the regime decision in the primal recursion \eqref{eq:primal_DPP_indicator} by a softmax network, extract the induced hard switching rule, and evaluate the resulting admissible strategy out of sample. Since this lower bound construction plays only a benchmarking role in the present paper, we omit further implementation details.



Given the SDE coefficients \((\mu,\sigma)\) in \eqref{eq:SDE} and the payoff data \((f^i,l_{ij},\Phi^i)\), the resulting deep dual algorithm is summarized in Algorithm~\ref{Alg:1}, which is the dual training component of \textit{DeepPD}.


\begin{algorithm}[H]
\caption{\textit{DeepMartingale} dual solver for optimal switching}\label{Alg:1}
\small
\begin{algorithmic}[1]
\REQUIRE Intervention time grid $\pi$, subgrid size $K$, training batch size $B^{\mathrm{tr}}$, number of epochs $M$, reference regime $i_0 \in \mathcal J$, baseline $(\eta_n^{i_0})_{n\in\overline{N}^{-1}}$.
\STATE Initialize the DeepMartingale parameters $\theta = (\theta_n)_{n\in\overline{N}^{-1}}$ in \eqref{deep_mart}.
\FOR{$m = 1,\dots,M$}
    \STATE Simulate $B^{\mathrm{tr}}$ sample paths of $(X,W)$ with step size $\Delta t = T/(NK)$.
    \STATE Set $U_N^i = \Phi^i(X_{t_N})$, $i\in\mathcal J$.
    \FOR{$n = N-1,\dots,0$}
        \STATE Update $\theta_n$ by solving \eqref{eq_optim_second_moment} or \eqref{eq_optim} in Problem~\ref{pb:backward_minimization} at the reference regime $i_0$.
        \STATE Update $U_n^i$, $i\in\mathcal J$, via the dual recursion \eqref{eq:equality-dual-upper-bound}.
    \ENDFOR
\ENDFOR
\STATE \textbf{return} $\theta$.
\end{algorithmic}
\end{algorithm}

We use ReLU activations and apply batch normalization before the input layer and activations. Unless stated otherwise, we take depth $I=3$, training batch size $B^{\mathrm{tr}}=4{,}096$, Adam optimizer with learning rate $10^{-3}$, and Xavier normal initialization. The number of training epochs is $M=1000+20d$ for the first example and $M=300+3d$ for the second one. We always choose $i_0=1$ for duality training. Final upper and lower bounds are evaluated with $1{,}638{,}400$ new samples.\footnote{Codes are available at: \url{https://github.com/GEOR-TS/DeepMartingale-OptimalSwitching}.}

For a like-for-like comparison, we re-implement \cite[\textit{DeepOSJ}]{Bayraktar23-deep-switching} in our upper-lower bound evaluation by Pytorch for the first example and keep all original training setup, except for model changes and continuous-observation adjustment. For the second one, we use original codes in \cite{Bayraktar23-deep-switching}\footnote{Their codes are available at: \url{https://github.com/april-nellis/osj}.} and implement our upper-lower bound evaluation. 

\subsection{Experiments}\label{subsubsec:discrete_obs}

\vspace{0.5em}
\paragraph{Continuous-observation under geometric Brownian motion}
We fix \(\mathcal J=\{1,2,3\}\), \(T=1\), \(N=12\), terminal payoff \(\Phi^i\equiv0\), and running rewards
\(
f^1(t,x)=-0.5 \)
\(
f^2(t,x)=\frac{2}{d}\sum_{k=1}^d x_k-100,
\)
\(
f^3(t,x)=2(x_1-1.1x_d)-1,
\)
with switching costs
\(
l_{ij}(t,x)\equiv 0.2\,|i-j|.
\)
The state process is the \(d\)-dimensional geometric Brownian motion
\[
\frac{dX_t}{X_t}
=
-0.05\,\mathbf 1_d\,dt+\Sigma\,dW_t,
\qquad
X_0=50\,\mathbf 1_d,
\]
where \(\Sigma=\diag(\sigma_1,\ldots,\sigma_d)\), with \(\sigma_k=0.2\) for \(k\le d/2\) and \(\sigma_k=0.3\) otherwise.

To handle the continuous-observation integral, we use \(K=60+d\) substeps between intervention dates. Since \(i_0=1\), \(f^1\equiv -0.5\), and \(l_{1j}\le 0.4\), we choose the baseline
\(
\eta_n^{i_0}=0.45(n-N)
\)
and use the $L^2$-surrogate loss \eqref{eq_optim_second_moment}. Table~\ref{tab:GBM_contin_obs_combined} compares \textit{DeepPD} with a continuous-observation version of \textit{DeepOSJ}. Both methods are implemented in PyTorch in single precision (float32) on an NVIDIA A100 GPU (40 GB memory) with dual AMD Rome 7742 CPUs.

Table~\ref{tab:GBM_contin_obs_combined} reports upper bounds, lower bounds, maximal duality gaps across regimes, and the CVaR of the hedging portfolio for regime \(i=1\). \textit{DeepOSJ} is slightly better at \(d=2\), but from \(d=10\) onward \textit{DeepPD} yields smaller gaps and substantially better tail-risk performance. In particular, the maximal duality gap of \textit{DeepPD} stays close to \(0.1\) across all tested dimensions, while \textit{DeepOSJ} runs out of memory for \(d\ge 20\). Thus, \textit{DeepPD} remains stable and accurate up to \(d=100\). 
This highlights the main computational strength of 
the proposed dual solver: 
the reference-regime \textit{DeepMartingale} training is memory-efficient, dimension-scalable, and produces accurate computable upper bounds  
together with robust hedging performance, while the auxiliary lower bounds provide an empirical benchmark for gap assessment. 
Figure~\ref{fig:GBM_hedging_contin} shows the worst-case hedging error distribution for \(d=10\); \textit{DeepPD} exhibits smaller VaR and lighter tails.



Since the dual upper-bound operator in \eqref{eq:upper_bound_operator} also induces a non-adapted switching rule through its maximizing index, we generate $200,000$ out-of-sample states to visualize the resulting preferred-regime partitions for \(d=2\) and \(n=6\); see Figure~\ref{fig:preferred_regime_region}. We compare \textit{DeepPD}, using both the primal policy and the dual-induced rule, with \textit{DeepOSJ}, where switching decisions are determined by the rule in \cite{Bayraktar23-deep-switching}.

Two observations are worth emphasizing. First, for all current regimes, the partitions induced by the \textit{DeepPD} dual are qualitatively close to those obtained from the \textit{DeepPD} primal. Since the dual-induced rule uses future information through the martingale noise, it is not necessarily admissible, and its boundary is therefore slightly more diffuse. Nevertheless, the overall geometry remains highly consistent, which supports the interpretation that the learned dual martingale captures the correct switching structure. If the learned \textit{DeepMartingales} coincide with the Doob martingales, then Theorem~\ref{thm:stong_duality} implies that the dual-induced boundary recovers the exact switching boundary.

Second, compared with \textit{DeepOSJ}, the \textit{DeepPD} dual produces a more coherent and stable partition, whereas \textit{DeepOSJ} exhibits more pronounced kinks and local distortions near the switching region. This suggests that the dual representation captures the switching geometry more robustly.




\begin{table}[ht]
\caption{Continuous observation under GBM}
\label{tab:GBM_contin_obs_combined}
\centering
\small
\setlength{\tabcolsep}{4pt}
\renewcommand{\arraystretch}{1.1}
\begin{tabular}{c c c c c c c}
\hline
\multirow{2}{*}{\raisebox{-0.9ex}{$d$}} 
& \multirow{2}{*}{\raisebox{-0.9ex}{Method}} 
& \multirow{2}{*}{\raisebox{-0.9ex}{UB}} 
& \multirow{2}{*}{\raisebox{-0.9ex}{LB}} 
& \multirow{2}{*}{\raisebox{-0.9ex}{Gap(max)}} 
& \multicolumn{2}{c}{CVaR (\(i=1\))} \\
\cmidrule(lr){6-7}
& & & & & \(95\%\) & \(99\%\) \\
\hline
\multirow{2}{*}{$ 2 $}
& DeepPD  & $ [7.191, 7.261, 7.069] $ & $ [7.084, 7.150, 6.950] $ & $ 0.115 $ & $2.731$ & $\mathbf{3.855}$ \\
& DeepOSJ & $ [7.158, 7.206, 7.006] $ & $ [7.038, 7.106, 6.906] $ & $ 0.120 $ & $\mathbf{2.014}$ & $4.324$ \\
\hline
\multirow{2}{*}{$ 10 $}
& DeepPD  & $ [5.098, 5.155, 4.959] $ & $ [5.009, 5.063, 4.863] $ & $ 0.096 $ & $\mathbf{2.510}$ & $ \mathbf{3.478} $ \\
& DeepOSJ & $ [5.098, 5.143, 4.943] $ & $ [4.935, 4.968, 4.768] $ & $ 0.175 $ & $4.310$ & $7.023$ \\
\hline
\multirow{2}{*}{$ 20 $}
& DeepPD  & $ [4.701,4.752,4.555] $ & $ [4.609,4.653,4.453] $ & $ 0.103 $ & $\mathbf{2.566}$ & $\mathbf{3.625}$ \\
& DeepOSJ & N/A & N/A & N/A & N/A & N/A \\
\hline
\multirow{2}{*}{$ 30 $}
& DeepPD  & $ [4.552, 4.598, 4.401] $ & $ [4.456, 4.491, 4.291] $ & $ 0.109 $ & $\mathbf{2.526}$ & $\mathbf{3.571}$ \\
& DeepOSJ & N/A & N/A & N/A & N/A & N/A \\
\hline
\multirow{2}{*}{$ 50 $}
& DeepPD  &  $ [4.433, 4.469, 4.271] $ & $ [4.336, 4.357, 4.157] $  & $ 0.114 $ & $\mathbf{2.529}$ & $\mathbf{3.598}$ \\
& DeepOSJ & N/A & N/A & N/A & N/A & N/A \\
\hline
\multirow{2}{*}{$ 100 $}
& DeepPD  & $ [4.348, 4.366, 4.169] $ & $ [4.253, 4.250, 4.050] $ & $ 0.119  $ & $\mathbf{2.707}$ & $\mathbf{3.912}$ \\
& DeepOSJ & N/A & N/A & N/A & N/A & N/A \\
\hline
\end{tabular}
\end{table}

\begin{figure}[tbp]
\begin{minipage}[t]{0.48 \linewidth}
\centering
\includegraphics[height=5.5cm,width=5.4cm]{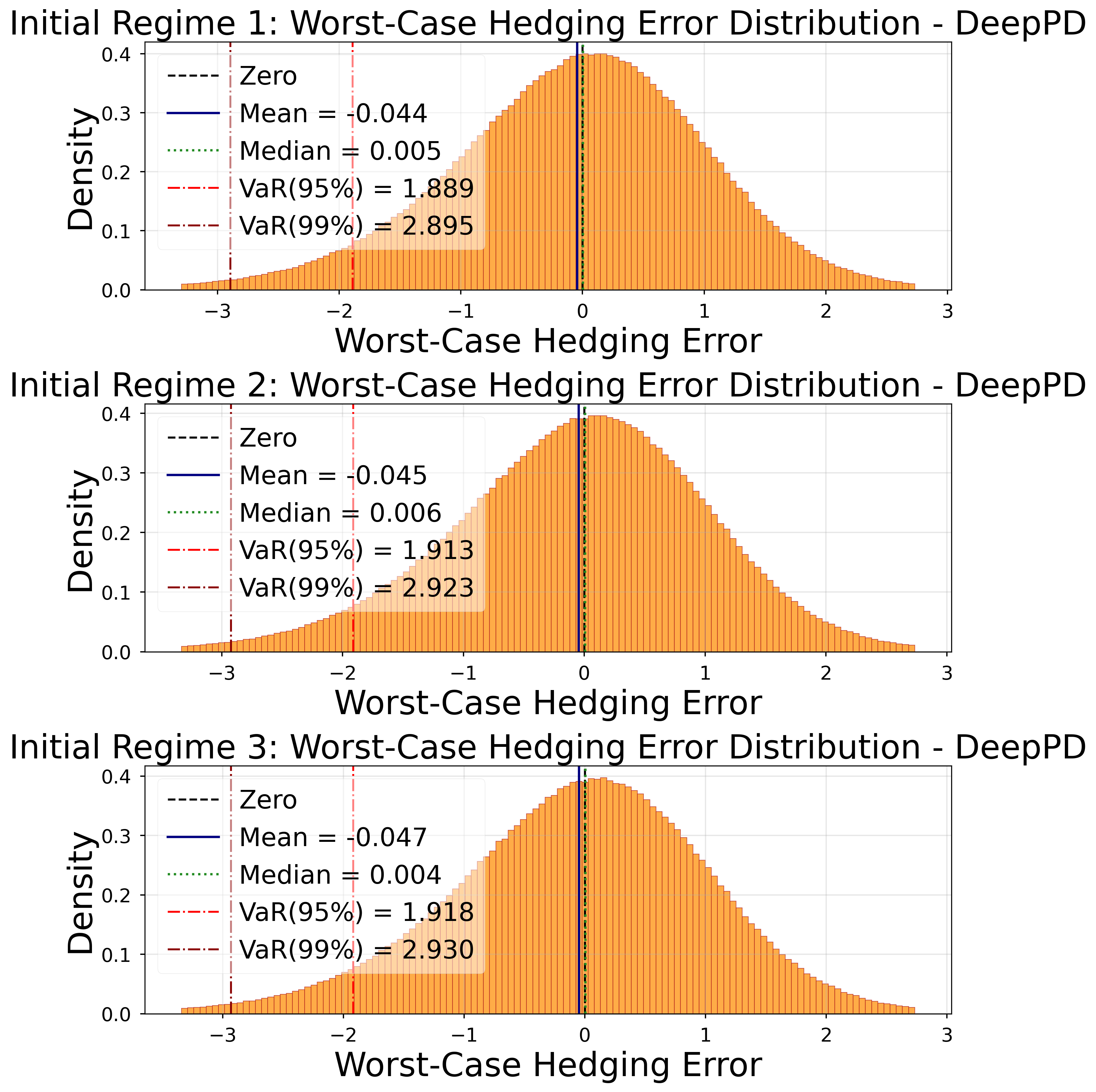}
\end{minipage}%
\begin{minipage}[t]{0.48 \linewidth}
\centering
\includegraphics[height=5.5cm,width=5.3cm]{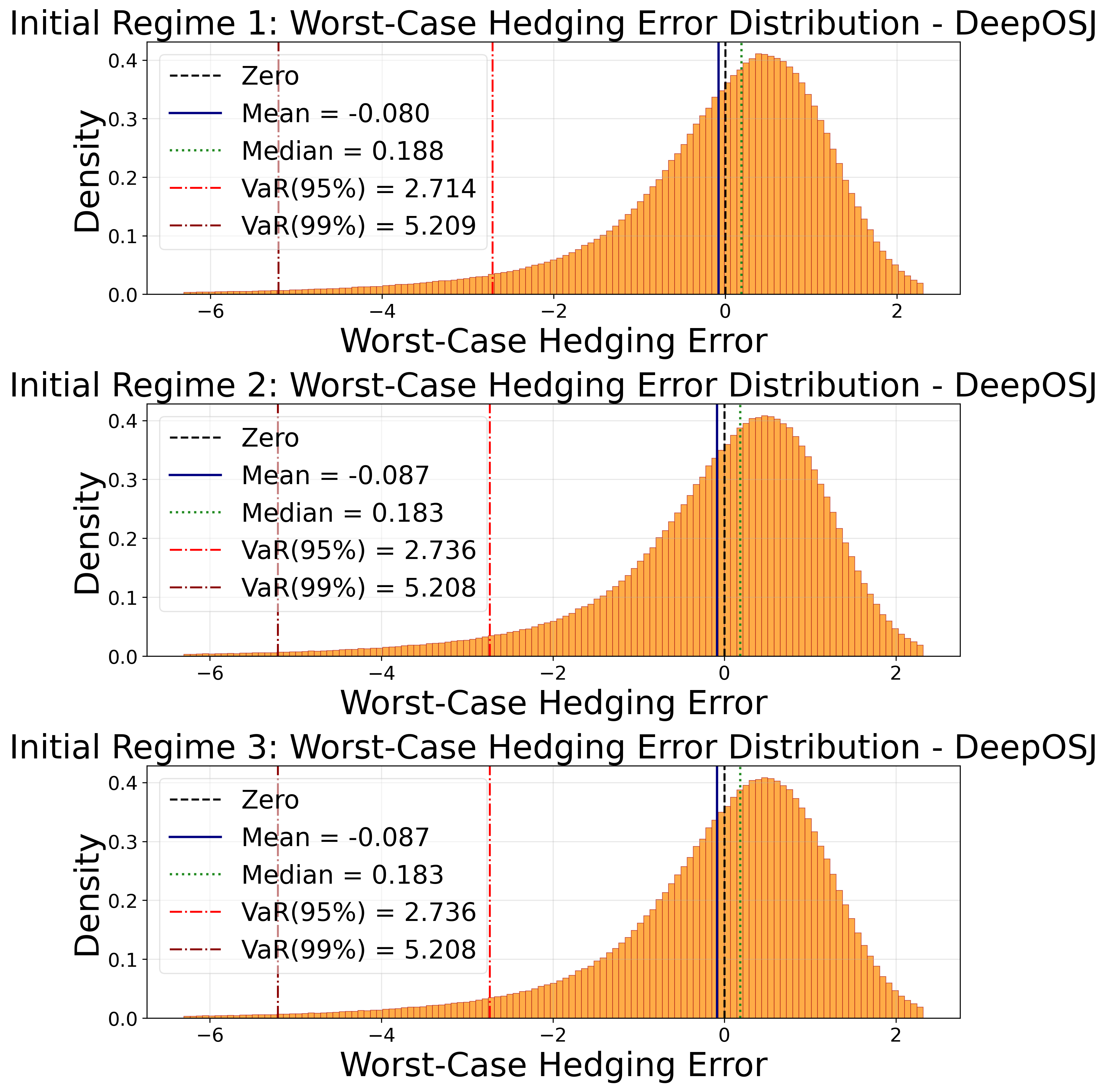}
\end{minipage}%
\caption{Worst-case hedging error distribution, \(d=10\)}
\label{fig:GBM_hedging_contin}
\end{figure}

\begin{figure}[tbp]
\centering
\includegraphics[height=9cm,width=9.2cm]{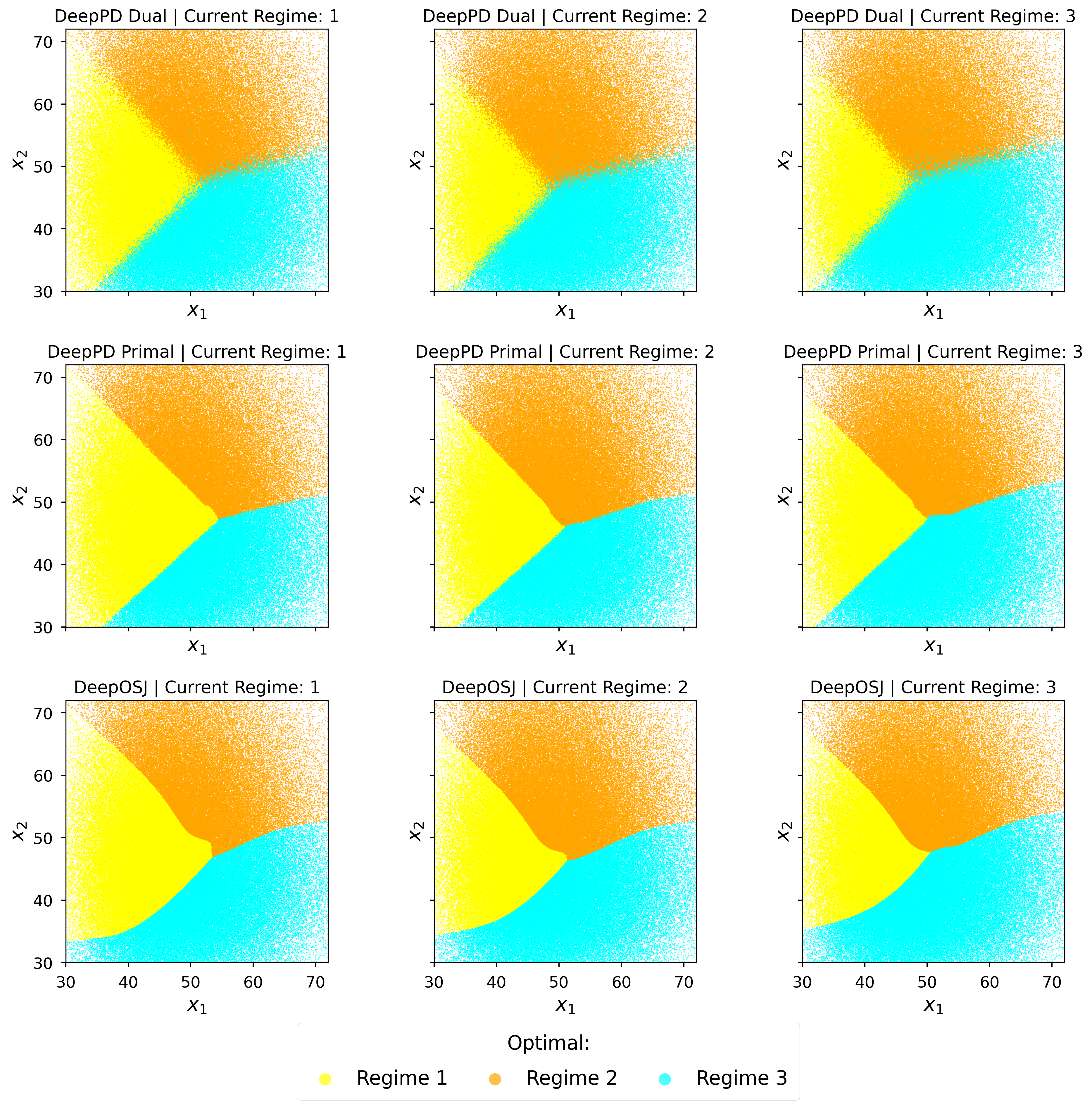}
\caption{Switching Region/Boundary Comparison, $d=2$, $n=6$}
\label{fig:preferred_regime_region}
\end{figure}

\vspace{0.5em}
\paragraph{Brownian--Poisson filtration}
We test our duality theory in Brownian--Poisson filtration and extend \textit{DeepMartingales} \eqref{deep_mart} by an additional jump-network term
\[
\sum_{k=0}^{K-1} z_{n,\mathrm{P}}^{i,\theta_n;K,d}(t_k^n,X_{t_k^n})\,\Delta N_{t_k^n},
\quad n\in\N^{-1},
\]
where \(N\) is a $d$-dimensional Poisson process independent of \(W\). 

Following \cite[Example~4.1]{Bayraktar23-deep-switching}, we consider the exponential OU model with jumps
\begin{equation}\label{eq:exp_OU_jumps}
d(\log X_t)=\kappa(\mu-\log X_t)\,dt+\Sigma^1\,dW_t+\Sigma^2\,dN_t,
\qquad X_0=x,
\end{equation}
where 
\(\Sigma^1\in\R^{d\times d}\) is non-degenerate, \(\Sigma^2\) is a \(\R^{d\times d}\)-valued random variable, \(\kappa\in\R^{d\times d}\) and \(\mu\in\R^d\). In this example, we fix $ K \equiv 1 $ to match the setup in \cite{Carmona-Ludkoviski01122008,Bayraktar23-deep-switching}.

We use the parameter specification of \cite[Example~4.1]{Bayraktar23-deep-switching} and compare \textit{DeepPD}, \textit{DeepOSJ}, and the least-squares benchmark \cite[\textit{LS}]{Carmona-Ludkoviski01122008}. We test both the upper-bound loss \eqref{eq_optim} and $L^2$-surrogate loss \eqref{eq_optim_second_moment}. Since \(i_0=1\), \(f^1\equiv -1\), and
\[
l_{1j}(s)\le \frac{0.01}{d-1}\sum_{k=2}^d X_s^k + 0.001,
\]
we choose
\(
\eta_n^{i_0}
=
(n-N)\big(0.01 E_{t_n}+0.001+\frac{1}{720}\big),
\; 
E_{t_n}
:=
e^{0.02}\max\big\{\frac{1}{d-1}\sum_{k=2}^d X_{t_n}^k,6\big\},
\)
using the explicit conditional moment bound from \cite{Carmona-Ludkoviski01122008,Bayraktar23-deep-switching}. All methods in this example are run in PyTorch on an Apple Silicon M4 Pro CPU with 64 GB memory.



Figure~\ref{fig:exp_OU_value_compare} shows that \textit{DeepPD} remains competitive in the Brownian--Poisson setting. \textit{DeepOSJ} attains the best upper bound, whereas \textit{DeepPD} typically gives the stronger feasible lower bound. Even when the primal approximation is imperfect (e.g.\ $d=50$), the \textit{DeepOSJ} upper bound remains accurate, demonstrating the robustness of duality method in high dimensions. We use the lower bound mainly as a numerical benchmark for the dual solver. The advantage of \textit{DeepPD} on the dual side becomes more pronounced when observation is more frequent as shown in the Brownian setting.

\begin{figure}[tbp]
\centering
\includegraphics[height=4cm,width=11cm]{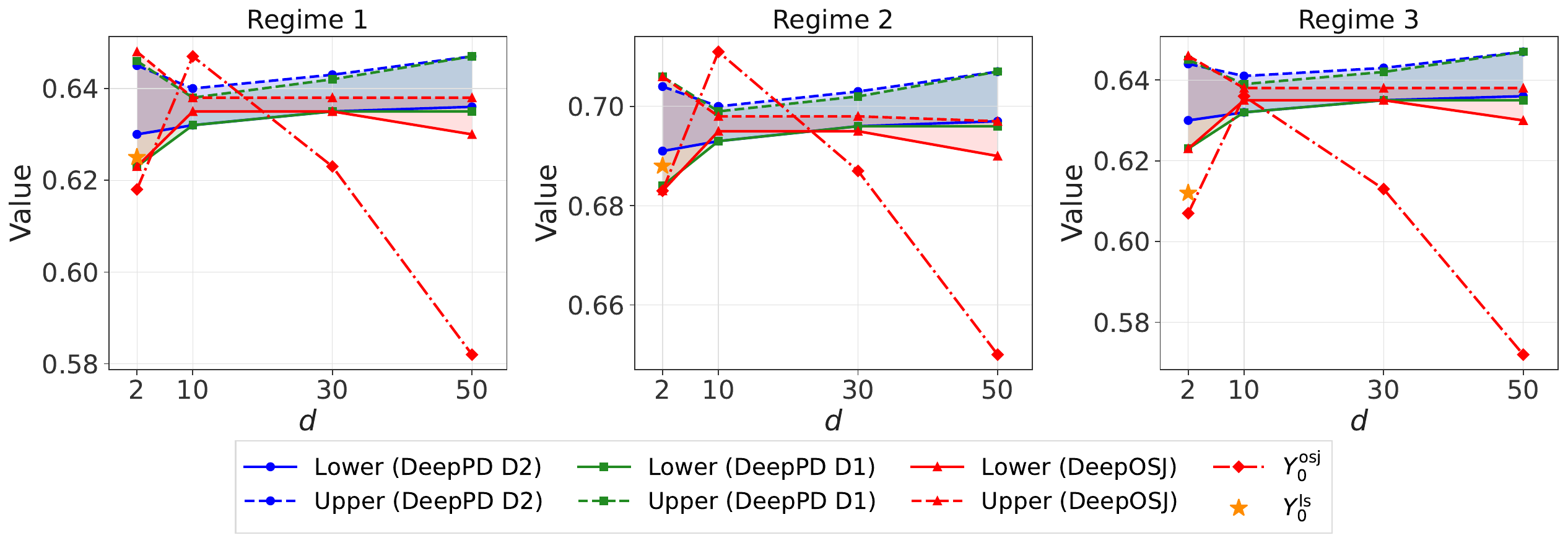}
\caption{Brownian--Poisson filtration: value comparison}
\label{fig:exp_OU_value_compare}
\end{figure}



\appendix
\section{Proofs} 

\subsection{Proof of results in Section~\ref{sec:pb_formulate}}

\begin{proof}[Proof of Theorem~\ref{thm:switching-equiv-mode-decide-primal}]
The case \(n=N\) is immediate. Assume \(n<N\).

\textit{Step 1: switching \(\Rightarrow\) regime-decision.}
Fix \(\alpha=(\tau_r,d_r)_{r\ge 0}\in\mathcal A_n^i\). Define the induced regime-decision process \(j=(j_m)_{m=n}^N\) by
\[
j_m:=\sum_{r\ge 0} d_r\,1_{\{\tau_r\le m<\tau_{r+1}\}},
\quad m\in\N_n^{-1},
\quad
j_N:=j_{N-1},
\]
with the convention that \(\tau_{r+1}=N\) once \(\tau_r=N\). Since each \(\tau_r\) is a discrete stopping time, \(j_m\) is \(\mathcal F_{t_m}\)-measurable for every \(m\), and hence \(j\in\mathcal D_n^i\). By regrouping the running rewards and switching costs, we obtain
\(
J_n^i(\alpha)=L_n^i(j).
\)
Taking \(\mathbb E_{t_n}[\cdot]\) and then the essential supremum over \(\alpha\in\mathcal A_n^i\) yields
\(
\overline Y_{t_n}^i
\le
\operatorname*{ess\,sup}_{j\in\mathcal D_n^i}L_n^i(j).
\)

\textit{Step 2: regime-decision \(\Rightarrow\) switching.}
Conversely, fix \(j=(j_m)_{m=n}^N\in\mathcal D_n^i\). Define recursively
\(
\tau_0:=n,\; d_0:=i,
\)
and, for \(r\ge 1\),
\[
\tau_r
:=
\inf\{k\in\overline N_n:\ k>\tau_{r-1},\ j_k\neq j_{\tau_{r-1}}\}\wedge N,
\quad
d_r:=j_{\tau_r}.
\]
Once \(\tau_{r-1}=N\), set \(\tau_r:=N\) and \(d_r:=j_{N-1}\). By adaptedness of \(j\), each \(\tau_r\) is a discrete stopping time, so \(\alpha=(\tau_r,d_r)_{r\ge 0}\in\mathcal A_n^i\). Again, regrouping gives
\(
J_n^i(\alpha)=L_n^i(j).
\)
Taking the essential supremum over \(j\in\mathcal D_n^i\) gives the reverse inequality.
\end{proof}

\vspace{0.1em}
\begin{proof}[Proof of Lemma~\ref{lem:weak-duality}]
For any \(d\in\mathcal D_n^i\), viewing \(d\) as an element in \(\mathcal J_n\),
\(
L_n^i(d)
=
\mathbb E_{t_n}\!\big[\widetilde U_n^{i,d}(M)\big]
\le
\mathbb E_{t_n}\!\big[\widetilde U_n^i(M)\big],
\)
since \(d\) is adapted and the martingale increments have zero conditional expectation, it is easy to derive \eqref{eq:mode-choosing-weak-dual} using \eqref{eq:original-primal-mode-choose}.
\end{proof}

\subsection{Proof of results in Section~\ref{sec:duality_equiv}}

\begin{proof}[Proof of Lemma~\ref{lem:no-free-lunch-dual}]
For \(n\in\N^{-1}\), we have 
\[
\overline U_{n,m}^i( \overline{M}^i) =
\int_{t_n}^{t_m} f^i(s)\,ds
+\mathcal R_m^{i,\iota_m^i}1_{(m<N)}
+\Phi^i1_{(m=N)}
-\overline M_{t_m}^i+\overline M_{t_n}^i,
\quad m\in\overline N_n
\]
by \eqref{eq:iterative_dual_upper_two_regime}, \eqref{eq:mode-decision-distinguish-def}. Then, according to \eqref{eq:dual-stopping}, 
\(
\overline Y_{t_n}^i=\max_{m\in\overline N_n}\overline U_{n,m}^i( \overline{M}^i) 
\)
and
\(
\tau_n^i=\inf\;\argmax_{m\in\overline N_n}\overline U_{n,m}^i( \overline{M}^i) .
\)
Hence, 
\begin{equation}\label{eq:stopped-Y-ori-short}
\overline Y_{t_n}^i
=
\int_{t_n}^{t_{\tau_n^i}} f^i(s)\,ds
+\mathcal R_{\tau_n^i}^{i,\iota_{\tau_n^i}^i}1_{S^{i,n}}
+\Phi^i1_{(\tau_n^i=N)}
-\overline M_{t_{\tau_n^i}}^i+\overline M_{t_n}^i.
\end{equation}

Fix \(m<N\), and set
\(
\hat\tau_m^i:=\tau_m^{\iota_m^i},
\)
\(
\hat\iota_m^i:=\iota_m^{\iota_m^i}.
\)
On \(A^{i,m}=\{\hat\tau_m^i=m,\ \hat\iota_m^i\neq i\}\), \eqref{eq:stopped-Y-ori-short} at time \(m\) in regime \(\iota_m^i\) gives
\[
\overline{\mathcal R}_n^i = \mathcal R_m^{i,\iota_m^i}
=\overline Y_{t_m}^{\hat\iota_m^i}
-l_{\iota_m^i\hat\iota_m^i}(t_m)-l_{i\iota_m^i}(t_m)
<\overline Y_{t_m}^{\hat\iota_m^i}-l_{i\hat\iota_m^i}(t_m)
\le \mathcal R_m^{i,\iota_m^i} = \overline{\mathcal R}_n^i,
\]
which can not hold with positive probability. Therefore \(\P(A^{i,m})=0\) for all \(m\in\N^{-1}\). Since
\(1_{(m<N)}=1_{\widetilde A_m^{i,n}}\) a.s.,
\eqref{eq:iterative-dual-surely-optimal-2} and
\eqref{eq:iterative-dual-surely-optimal-stopping-time-2} follow.

Next, we have
\[
B^{i,n}\cap S^{i,n}
=\bigcup_{m=n}^{N-1}\Big(A^{i,m}\cap\{\tau_n^i=m\}\Big)
\cup\big(C^{i,n}\cap S^{i,n}\big)
\subset \bigcup_{m=n}^{N-1}A^{i,m}\cup\big(C^{i,n}\cap S^{i,n}\big).
\]
Thus it remains to prove \(\P(C^{i,n}\cap S^{i,n})=0\). On this event, with
\(
\tau:=\tau_n^i<N,
\)
\(
\iota:=\iota_\tau^i\neq i,
\)
we have \(\tau_\tau^\iota=\tau\) and \(\iota_\tau^\iota=i\). Hence, by
\eqref{eq:stopped-Y-ori-short} and Supplementary Materials-\eqref{eq:stopping-equality-dual},
\begin{align*}
\overline Y_{t_n}^i
&=\int_{t_n}^{t_\tau} f^i(s)\,ds
+\mathcal R_\tau^{i,\iota}
-\overline M_{t_\tau}^i+\overline M_{t_n}^i\\
&=\int_{t_n}^{t_\tau} f^i(s)\,ds
+\overline Y_{t_\tau}^i
-l_{\iota i}(t_\tau)-l_{i\iota}(t_\tau)
-\overline M_{t_\tau}^i+\overline M_{t_n}^i\\
&<\int_{t_n}^{t_\tau} f^i(s)\,ds
+\overline Y_{t_\tau}^i-l_{ii}(t_\tau)
-\overline M_{t_\tau}^i+\overline M_{t_n}^i
=\overline Y_{t_n}^i,
\end{align*}
which can not hold with positive probability again. Thus \(\P(C^{i,n}\cap S^{i,n})=0\), and therefore
\(
\P(B^{i,n}\cap S^{i,n})=\P(D^{i,n})=0.
\)
Finally, \(1_{S^{i,n}}=1_{(B^{i,n})^c\cap S^{i,n}},\; \P\)-a.s., and
\eqref{eq:stopped-Y} follows from \eqref{eq:stopped-Y-ori-short}.
\end{proof}

\vspace{0.1em}
\begin{proof}[Proof of Theorem~\ref{thm:construction-optimal-mode-decision}]
Set \(\hat\tau_n^i:=\tau_n^{\iota_n^i}\).

\emph{Step 1: proof of (ii)--(iii), assuming the family is well defined.}
On \(\{\tau_n^i=n\}\), \(\mathbb P(D^{i,n})=0\) by \Cref{lem:no-free-lunch-dual}, hence \(\hat\tau_n^i\ge n+1\) a.s.; the second branch of \eqref{eq:construction-mode-decision} then gives
\(
j_k^{i,n}=j_k^{\iota_n^i,n},\; k\in\overline N_n,
\)
which is (iii). On \(\{\tau_n^i>n\}\), \Cref{lem:DPP-dual-stopping-optimal} in Supplementary Materials yields \(\tau_n^i=\tau_{n+1}^i\). If \(\tau_{n+1}^i>n+1\), the first branch of \eqref{eq:construction-mode-decision} gives \(j_k^{i,n}=j_k^{i,n+1}\) for \(k\ge n+1\); if \(\tau_{n+1}^i=n+1\), then (iii) at time \(n+1\) yields \(j^{i,n+1}=j^{\iota_{n+1}^i,n+1}\), so again \(j_k^{i,n}=j_k^{i,n+1}\). Thus
\(
j_k^{i,n}=j_k^{j_n^{i,n},\,n+1},\; k\in\overline N_{n+1},
\)
on \(\{\tau_n^i>n\}\). On \(\{\tau_n^i=n\}\), combine (iii) with the previous argument on each \(\{\iota_n^i=\ell\}\), \(\ell\in\mathcal J\), to obtain the same identity. Hence (ii).

\emph{Step 2: well-definedness, adaptedness, membership in \(\mathcal D_n^i\), and (i).}
We argue by backward induction on \(n\). The case \(n=N\) is immediate since \(j_N^{i,N}=i\).

Fix \(n<N\) and assume that, for every \(m\in\overline N_{n+1}\) and \(i\in\mathcal J\), \(j^{i,m}\) is well defined, \(\mathbb F_m\)-adapted, belongs to \(\mathcal D_m^i\), and satisfies (i). Let \(i\in\mathcal J\).

On \(\{\tau_n^i>n\}\), \eqref{eq:construction-mode-decision} only uses already constructed \(j^{\ell,m}\) with \(m\ge n+1\), so \(j^{i,n}\) is well defined. Moreover, for \(k\in\overline N_n\),
\[
j_k^{i,n}
= i\,1_{\{k<\tau_n^i\}}
+ \sum_{m=n+1}^k\sum_{\ell=1}^J
j_k^{\ell,m}\,1_{\{\tau_n^i=m,\ \iota_m^i=\ell\}},
\]
hence \(j_k^{i,n}\in\mathcal F_{t_k}\). Since \(j_n^{i,n}=i\), property (ii) gives \(j_k^{i,n}=j_k^{i,n+1}\) for \(k\ge n+1\); thus \(j_N^{i,n}=j_{N-1}^{i,n}\) by the induction hypothesis, so \(j^{i,n}\in\mathcal D_n^i\). Finally, \Cref{lem:DPP-dual-stopping-optimal} in Supplementary Materials gives
\[
\overline Y_{t_n}^i>\overline{\mathcal R}_n^i,
\quad
j_n^{i,n}=i\in\argmax_{j\in\mathcal J}\big(\mathcal R_n^{i,j}1_{(n<N)}\big),
\]
and the remaining statement in (i) are from the induction hypothesis via (ii).

On \(\{\tau_n^i=n\}\), \(\mathbb P(D^{i,n})=0\) implies \(\hat\tau_n^i\ge n+1\) a.s., so \eqref{eq:construction-mode-decision} again only refers to already constructed \(j^{\ell,m}\), \(m\ge n+1\). Also,
\[
j_k^{i,n}
=\iota_n^i\,1_{\{k<\hat\tau_n^i\}}
+\sum_{m=n+1}^k\sum_{\ell=1}^J
j_k^{\ell,m}\,1_{\{\hat\tau_n^i=m,\ \iota_m^{\iota_n^i}=\ell\}},
\]
which is \(\mathcal F_{t_k}\)-measurable after partitioning over \(\{\iota_n^i=p\}\), \(p\in\mathcal J\). By (iii), \(j^{i,n}=j^{\iota_n^i,n}\); on each \(\{\iota_n^i=p\}\), the previous case applies to the deterministic regime \(p\), since \(\tau_n^p=\hat\tau_n^i\ge n+1\), and therefore \(j_N^{i,n}=j_{N-1}^{i,n}\). Hence \(j^{i,n}\in\mathcal D_n^i\). Moreover,
\[
j_n^{i,n}=\iota_n^i\in\argmax_{j\in\mathcal J}\big(\mathcal R_n^{i,j}1_{(n<N)}\big),
\quad
\overline Y_{t_n}^{i}=\overline{\mathcal R}_n^{i},
\quad
\overline Y_{t_n}^{\iota_n^i}>\overline{\mathcal R}_n^{\iota_n^i},
\]
by \Cref{lem:DPP-dual-stopping-optimal} in Supplementary Materials, and the remaining claims in (i) follow from \(j^{i,n}=j^{\iota_n^i,n}\) and the previous case.
The induction completes.
\end{proof}

\vspace{0.1em}
\begin{proof}[Proof of Theorem~\ref{thm:expansion-surely-optimal}]
We argue by backward induction on \(n\). For \(n=N\), since \(j_N^{i,N}=i\) and \(l_{ii}\equiv0\),
\(
\widetilde U_N^{i,j^{i,N}}(\overline M)
=\Phi^{j_N^{i,N}}-l_{i\,j_N^{i,N}}(t_N)
=\Phi^i
=\overline Y_{t_N}^i.
\)
Fix \(n<N\) and assume that, for all \(m\in\overline N_{n+1}\) and \(i\in\mathcal J\),
\(
\overline Y_{t_m}^i=\widetilde U_m^{i,j^{i,m}}(\overline M)
\;  \mathbb P\text{-a.s.}
\)
Let \(i\in\mathcal J\).

\emph{Case 1: \(\tau_n^i>n\).}
Then \(j_k^{i,n}=i\) for \(n\le k<\tau_n^i\) and
\(j_k^{i,n}=j_k^{\iota_{\tau_n^i}^i,\tau_n^i}\) for \(k\ge\tau_n^i\).
Moreover, by \Cref{lem:no-free-lunch-dual},
\(
\tau_{\tau_n^i}^{\iota_{\tau_n^i}^i}>\tau_n^i
\; \text{on }\{\tau_n^i<N\},\ \mathbb P\text{-a.s.}
\)
Hence, by \Cref{thm:construction-optimal-mode-decision}(i),
\(
j_{\tau_n^i}^{\iota_{\tau_n^i}^i,\tau_n^i}
\in\argmax_{j\in\mathcal J}\mathcal R_{\tau_n^i}^{\iota_{\tau_n^i}^i,j}
=\{\iota_{\tau_n^i}^i\}
\;\text{on }\{\tau_n^i<N\},
\)
so \(j_{\tau_n^i}^{i,n}=\iota_{\tau_n^i}^i\) there. Therefore, expanding 
\(\widetilde U_n^{i,j^{i,n}}(\overline M)\) at \(\tau_n^i\),
\[
    \begin{aligned}
        & \widetilde U_n^{i,j^{i,n}}(\overline M) = \\
& 
\int_{t_n}^{t_{\tau_n^i}} f^i(s) ds
-\overline M_{t_{\tau_n^i}}^i+\overline M_{t_n}^i
+ \big[
\widetilde U_{\tau_n^i}^{\iota_{\tau_n^i}^i, j^{\iota_{\tau_n^i}^i,\tau_n^i}}(\overline M)
-l_{i \iota_{\tau_n^i}^i}(t_{\tau_n^i})
 \big] 1_{(\tau_n^i<N)}
+\Phi^i1_{(\tau_n^i=N)}.
    \end{aligned}
\]
Since \(\tau_n^i\in\overline N_{n+1}\) on \(\{\tau_n^i>n\}\), the induction hypothesis,
applied on the finite partition \(\{\tau_n^i=m,\ \iota_m^i=\ell\}\), gives
\(
\widetilde U_{\tau_n^i}^{\iota_{\tau_n^i}^i,\,j^{\iota_{\tau_n^i}^i,\tau_n^i}}(\overline M)
=\overline Y_{t_{\tau_n^i}}^{\iota_{\tau_n^i}^i}
\; \text{on }\{\tau_n^i>n\}.
\)
Hence, by \eqref{eq:stopped-Y}, 
\[
\widetilde U_n^{i,j^{i,n}}(\overline M)
=
\int_{t_n}^{t_{\tau_n^i}} f^i(s)\,ds
+\mathcal R_{\tau_n^i}^{i,\iota_{\tau_n^i}^i}1_{(\tau_n^i<N)}
+\Phi^i1_{(\tau_n^i=N)}
-\overline M_{t_{\tau_n^i}}^i+\overline M_{t_n}^i
=\overline Y_{t_n}^i .
\]

\emph{Case 2: \(\tau_n^i=n\).}
By \Cref{thm:construction-optimal-mode-decision}(iii),
\(
j^{i,n}=j^{\iota_n^i,n}
\; \text{on }\{\tau_n^i=n\}.
\)
Also, \(\mathbb P(D^{i,n})=0\) implies
\(
\tau_n^{\iota_n^i}>n
\; \text{on }\{\tau_n^i=n\},\ \mathbb P\text{-a.s.}
\)
Therefore, applying Case 1 to the pair \((n,\iota_n^i)\),
\[
\widetilde U_n^{\iota_n^i,j^{i,n}}(\overline M)
=\widetilde U_n^{\iota_n^i,j^{\iota_n^i,n}}(\overline M)
=\overline Y_{t_n}^{\iota_n^i}
\qquad\text{on }\{\tau_n^i=n\}.
\]
Since \(j_n^{i,n}=\iota_n^i\), the definition of \(\widetilde U\) yields
\[
\widetilde U_n^{i,j^{i,n}}(\overline M)
=\widetilde U_n^{\iota_n^i,j^{i,n}}(\overline M)-l_{i\,\iota_n^i}(t_n)
=\overline Y_{t_n}^{\iota_n^i}-l_{i\,\iota_n^i}(t_n)
=\mathcal R_n^{i,\iota_n^i}
=\overline Y_{t_n}^i
\]
on \(\{\tau_n^i=n\}\). This completes the induction.
\end{proof}

\vspace{0.1em}
\begin{proof}[Proof of Lemma~\ref{lem:switching-times-no-free-lunch}]
The case \(n=N\) is trivial. Fix \(n<N\).

(i). Set \(A:=\{\mathcal J_S=\varnothing\}\). Then, on \(A\), for every \(j\in\mathcal J\),
\(
q:=j_n^{j,n}\neq j,\; r:=j_n^{q,n}\neq q.
\)
Since \(q\neq j\), Property~(i) 
in \Cref{thm:construction-optimal-mode-decision} 
yields \( \tau^j_n = n \) and
\(
\overline Y_{t_n}^j = \mathcal R_n^{j,q}.
\)
Likewise, \(r\neq q\) implies \(\overline Y_{t_n}^q=\mathcal R_n^{q,r}\), hence, by the triangular condition,
\[
\overline Y_{t_n}^j
= \mathcal R_n^{j,q}
=\overline Y_{t_n}^r-l_{qr}(t_n)-l_{jq}(t_n)
<\overline Y_{t_n}^r-l_{jr}(t_n)
=\mathcal R_n^{j,r}
\le \overline Y_{t_n}^j  \quad  \P \text{-a.s}, 
\]
which can not hold with positive probability. Thus \(\mathcal J_S\neq\varnothing \; \; \P\)-a.s.

(ii). Fix \(j\in\mathcal J\), and let \(q:=j_n^{j,n}\). If \(q\notin\mathcal J_S\), then 
\(
r:=j_n^{q,n}\neq q.
\)
As above,
\[
\overline Y_{t_n}^j
=\mathcal R_n^{j,q}
=\overline Y_{t_n}^r-l_{qr}(t_n)-l_{jq}(t_n)
<\mathcal R_n^{j,r}
\le \overline Y_{t_n}^j \quad \P\text{-a.s.} ,
\]
which can not hold with positive probability. Hence \(q\in\mathcal J_S \; \; \P \)-a.s.
\end{proof}

\vspace{0.1em}
\begin{proof}[Proof of Corollary~\ref{cor:max-no-free-lunch}]
The case \(n=N\) is trivial. Fix \(n<N\). For \(j\in\mathcal J_N\), let \(q:=j_n^{j,n}\). By \Cref{lem:switching-times-no-free-lunch}(ii), \(q\in\mathcal J_S \; \; \P\)-a.s. Moreover, by the triangular condition, 
\[
\mathcal R_n^{i,j}
=\overline Y_{t_n}^q-l_{jq}(t_n)-l_{ij}(t_n)
\le \overline Y_{t_n}^q-l_{iq}(t_n)
=\mathcal R_n^{i,q} \quad \P\text{-a.s.}  ,
\]
Hence, 
\(
\max_{j\in\mathcal J}\mathcal R_n^{i,j}
=
\max_{j\in\mathcal J_S}\mathcal R_n^{i,j},
\)
and the claim follows from \(\overline Y_{t_n}^i=\max_{j\in\mathcal J}\mathcal R_n^{i,j}\).
\end{proof}

\vspace{0.1em}
\begin{proof}[Proof of Theorem~\ref{thm:DPP-upper-surely-optimal}]

For the first equality in \eqref{eq:first-second-equality-Y}, split the surely expansion at \(n+1\). By \Cref{thm:construction-optimal-mode-decision}(ii),
\(
j_k^{i,n}=j_k^{\,j_n^{i,n},\,n+1},\; k\in\overline N_{n+1},
\)
hence
\[
\overline Y_{t_n}^i
=
\int_{t_n}^{t_{n+1}} f^{j_n^{i,n}}(s)\,ds
-l_{i\,j_n^{i,n}}(t_n)
-\Delta\overline M_{t_n}^{\,j_n^{i,n}}
+\overline Y_{t_{n+1}}^{\,j_n^{i,n}}.
\]

For the second equality, define
\(
V_n^j:=
\int_{t_n}^{t_{n+1}} f^j(s)\,ds
-l_{ij}(t_n)
-\Delta\overline M_{t_n}^{\,j}
+\overline Y_{t_{n+1}}^{\,j},
\; j\in\mathcal J.
\)
If \(j\in\mathcal J_S\), then \(j_n^{j,n}=j\), so the first equality applied with initial regime \(j\) gives
\(
V_n^j
=
\overline Y_{t_n}^j-l_{ij}(t_n)
=
\mathcal R_n^{i,j}.
\)
Thus, by \Cref{cor:max-no-free-lunch},
\(
\overline Y_{t_n}^i
=
\max_{j\in\mathcal J_S}\mathcal R_n^{i,j}
=
\max_{j\in\mathcal J_S}V_n^j.
\)
If \(j\in\mathcal J_N\), then by \eqref{eq:equivalent-dual} and \eqref{eq:iterative_dual_upper_two_regime} with one-step comparison,
\(
\overline Y_{t_n}^j
\ge
\int_{t_n}^{t_{n+1}} f^j(s)\,ds
-\Delta\overline M_{t_n}^{\,j}
+\overline Y_{t_{n+1}}^{\,j},
\)
so again, 
\[
V_n^j
\le
\overline Y_{t_n}^j-l_{ij}(t_n)
=
\mathcal R_n^{i,j}
\le
\max_{\ell\in\mathcal J_S}\mathcal R_n^{i,\ell}
=
\max_{\ell\in\mathcal J_S}V_n^\ell.
\]
Hence, \(\max_{j\in\mathcal J}V_n^j=\max_{j\in\mathcal J_S}V_n^j=\overline Y_{t_n}^i\), which proves \eqref{eq:first-second-equality-Y}.

Finally, \eqref{eq:equality-dual-upper-bound} follows by splitting the maximum operator of \eqref{eq:upper_bound_operator} according to the first regime choice \(j_n\):
\[
\widetilde U_n^i(M)
=
\max_{j_n\in\mathcal J}
\Big[
\int_{t_n}^{t_{n+1}} f^{j_n}(s)\,ds
-l_{ij_n}(t_n)
-\Delta M_{t_n}^{\,j_n}
+\widetilde U_{n+1}^{j_n}(M)
\Big]
\]
as claims.
\end{proof}

\vspace{0.1em}
\begin{proof}[Proof of Theorem~\ref{thm:stong_duality}]
By \Cref{thm:expansion-surely-optimal},
\(
\overline Y_{t_n}^i=\widetilde U_n^{i,j^{i,n}}(\overline M)\le \widetilde U_n^i(\overline M),
\; \mathbb P\text{-a.s.}
\)
It remains to prove \(\widetilde U_n^i(\overline M)\le \overline Y_{t_n}^i\). We argue by backward induction in \(n\).

The claim is trivial for \(n=N\), since
\(
\widetilde U_N^i(\overline M)=\Phi^i=\overline Y_{t_N}^i.
\)
Fix \(n<N\), and suppose 
\(
\widetilde U_{n+1}^j(\overline M)\le \overline Y_{t_{n+1}}^j,
\; j\in\mathcal J.
\)
Then, by \Cref{thm:DPP-upper-surely-optimal},
\[
\begin{aligned}
\widetilde U_n^i(\overline M)
&=
\max_{j\in\mathcal J}
\Big[
\int_{t_n}^{t_{n+1}} f^j(s)\,ds
-l_{ij}(t_n)
-\Delta\overline M_{t_n}^j
+\widetilde U_{n+1}^j(\overline M)
\Big] \\
&\le
\max_{j\in\mathcal J}
\Big[
\int_{t_n}^{t_{n+1}} f^j(s)\,ds
-l_{ij}(t_n)
-\Delta\overline M_{t_n}^j
+\overline Y_{t_{n+1}}^j
\Big]
=
\overline Y_{t_n}^i.
\end{aligned}
\]
Hence \(\widetilde U_n^i(\overline M)=\overline Y_{t_n}^i\), and therefore
\(
\overline Y_{t_n}^i
=
\widetilde U_n^{i,j^{i,n}}(\overline M)
=
\widetilde U_n^i(\overline M).
\)

Since \(\overline Y_{t_n}^i\) is \(\mathcal F_{t_n}\)-measurable,
\(
\mathbb E_{t_n}\!\big[\widetilde U_n^i(\overline M)\big]
=
\overline Y_{t_n}^i.
\)
Finally, weak duality gives
\(
\overline Y_{t_n}^i
\le
\essinf_{M\in(\mathcal M_{n,N})^J}\mathbb E_{t_n}\!\big[\widetilde U_n^i(M)\big],
\)
while choosing \(M=\overline M\) yields the reverse inequality. This proves
\eqref{eq:surely-optimal-strong-dual}.
\end{proof}

\vspace{0.1em}
\begin{proof}[Proof of Proposition~\ref{pro:primal_DPP}]
The terminal condition is immediate. By \Cref{thm:expansion-surely-optimal},
\(
\overline Y_{t_n}^i=\widetilde U_n^{i,j^{i,n}}(\overline M).
\)
Since \(j^{i,n}\in\mathcal D_n^i\) is adapted and \(\overline M\) are martingales, taking condition expectation eliminates the martingale increments, therefore 
\(
\overline Y_{t_n}^i
=\E_{t_n}\!\big[\widetilde U_n^{i,j^{i,n}}(\overline M)\big]
=L_n^i(j^{i,n}),
\)
which proves \eqref{eq:primal_optimality}.  \eqref{eq:lower_bound_DPP} is a direct one-step reformulation of \(L_n^i(d^n)\).

Set
\(
A_n^{i,j}:=\E_{t_n}\!\big[\int_{t_n}^{t_{n+1}} f^j(s)\,ds+\overline Y_{t_{n+1}}^j-l_{ij}(t_n)\big],\; j\in\mathcal J.
\)
Then, for any \(d^n\in\mathcal D_n^i\),
\(
L_n^i(d^n)
\le \sum_{j=1}^J 1_{\{d_n=j\}}A_n^{i,j}
\le \max_{j\in\mathcal J} A_n^{i,j},
\)
and hence
\(
\overline Y_{t_n}^i
=\esssup_{d^n\in\mathcal D_n^i}L_n^i(d^n)
\le \max_{j\in\mathcal J}A_n^{i,j}.
\)
Conversely, for each \(j\in\mathcal J\), the concatenation
\(
d^n:=(j,j^{j,n+1})
\)
belongs to \(\mathcal D_n^i\), and, by \eqref{eq:lower_bound_DPP} and \eqref{eq:primal_optimality} at time \(n+1\),
\[
A_n^{i,j}
=\E_{t_n}\!\Big[\int_{t_n}^{t_{n+1}} f^j(s)\,ds+L_{n+1}^j(j^{j,n+1})-l_{ij}(t_n)\Big]
=L_n^i(d^n)
\le \overline Y_{t_n}^i.
\]
Maximizing over \(j\) gives the reverse inequality in \eqref{eq:primal_DPP_value_process}.
\end{proof}

\subsection{Proof of results in Section~\ref{sec:DeepSwitchMart}}

\begin{proof}[Proof of Proposition~\ref{pro:L2-to-L1-loss}]
Part (i) is exactly \cite[Proposition~4.9(i)]{ye2025deepmartingale}. For (ii), set
\(
U_K:=\widetilde U_n^i(M^{\theta_i^K;K})-\eta_n^i,
\;
Y:=\overline Y_{t_n}^i-\eta_n^i.
\)
By weak duality,
\(
\E_{t_n}[U_K]\ge Y\ge0,
\)
and therefore
\(
\E|Y|^2\le \E\big|\E_{t_n}[U_K]\big|^2\le \E|U_K|^2.
\)
On the other hand, by the choice of \(\theta_i^K\) and \Cref{coro:tight_upper}(ii),
\[
\E|U_K|^2
\le
\inf_{\theta\in\Theta^{N\times J}}
\E\big|\widetilde U_n^i(M^{\theta;K})-\eta_n^i\big|^2+\varepsilon_K
\to \E|Y|^2 \; \text{as} \; K \uparrow \infty .
\]
Hence \(\E|U_K|^2\to \E|Y|^2\). Since $ a^2 - b^2 - (a-b)^2 = 2b(a-b) \ge 0 $ for any $ a \ge b \ge 0 $, we have 
\(
0\le \E\big|\E_{t_n}[U_K]-Y\big|^2 
\le \E|U_K|^2-\E|Y|^2,
\)
which implies
\(
\E\big|\E_{t_n}[U_K]-Y\big|^2\to0.
\)
Finally,
\(
\E|U_K-Y|^2
=
\E[\Var_n(U_K)]
+\E\big|\E_{t_n}[U_K]-Y\big|^2,
\)
so part (i) yields \(\E|U_K-Y|^2\to0\). This proves the \(L^2\)-convergence, and the convergence of expectations is immediate.
\end{proof}

\begin{proof}[Proof of Lemma~\ref{lem:V_Lip_growth}]
We argue by backward induction on \(n\). The terminal step is \(V_N^{i;d}=\Phi^{i;d}\). Suppose for \(n\in\N^{-1}\),
\(
|V_{n+1}^{j;d}(x)|\le C d^q(1+\|x\|),
\; 
|V_{n+1}^{j;d}(x)-V_{n+1}^{j;d}(y)|\le C d^q\|x-y\|,
\)
for all \(j\in\mathcal J\). Writing \(X^x:=X^{t_n,x;d}\) and \(X^y:=X^{t_n,y;d}\), define
\[
\Gamma_n^{i,j}(x)
:=
\E\!\Big[
\int_{t_n}^{t_{n+1}} f^{j;d}(s,X_s^x)\,ds
+V_{n+1}^{j;d}(X_{t_{n+1}}^x)
-l_{ij}^d(t_n,x)
\Big].
\]
Then \(V_n^{i;d}=\max_{j\in\mathcal J}\Gamma_n^{i,j}\) by \eqref{eq:primal_DPP_value_function}. By \cite[Theorems~A.7, A.10]{ye2025deepmartingale} and \(d\ge3\), there exist \(C_0,q_0>0\), independent of \(d\), such that
\[
\sup_{s\in[t_n,t_{n+1}]}\| X_s^x\|_{L^2}
\le C_0 d^{q_0}(1+\|x\|),
\; 
\sup_{s\in[t_n,t_{n+1}]}\| X_s^x-X_s^y\|_{L^2}
\le C_0 d^{q_0}\|x-y\|.
\]
Combining these estimates with Assumption~\ref{ass:N_0_structural_ass_model_deter}, induction hypothesis and direct estimation techniques, yields, for some \(C',q'>0\) independent of \(d\),
\(
|\Gamma_n^{i,j}(x)|\le C' d^{q'}(1+\|x\|),
\; 
|\Gamma_n^{i,j}(x)-\Gamma_n^{i,j}(y)|\le C' d^{q'}\|x-y\|.
\)
The pointwise maximum preserves both bounds, hence
\(
|V_n^{i;d}(0)|+\Lip V_n^{i;d}\le C' d^{q'}.
\)
By re-choosing constants, we completes the induction.
\end{proof}

\begin{proof}[Proof of Lemma~\ref{lem:Lip_Hol_lem_f_i}]
For any \(\varepsilon>0\), let \(\hat f_{\varepsilon}^{i;d}\) be as in Assumption~\ref{ass:other_new}. Then, 
\[
    \begin{aligned}
        |f^{i;d}(t,x)-f^{i;d}(t,y)|
        & \le 2\varepsilon cd^q(1+\|x\|+\|y\|) 
        +|\hat f_{\varepsilon}^{i;d}(t,x)-\hat f_{\varepsilon}^{i;d}(t,y)| \\
        & \le 2\varepsilon cd^q(1+\|x\|+\|y\|)+cd^q\|x-y\| , \quad \forall \; x,y\in\mathbb R^d .
    \end{aligned}
\]
Since this holds for all \(\varepsilon>0\), we have \(\Lip f^{i;d}(t,\cdot)\le cd^q\). Similarly, for \(s,t\in[0,T]\),
\[
|f^{i;d}(t,x)-f^{i;d}(s,x)|
\le 2\varepsilon cd^q(1+\|x\|)+cd^q(1+\|x\|)\sqrt{|t-s|},
\]
and letting \(\varepsilon\downarrow0\) yields the \(1/2\)-H\"older bound.
\end{proof}

\begin{proof}[Proof of Lemma~\ref{lem:nn_realization_maximum}]
 The binary maximum is realized by
\[
\max(a,b)=A_2 \varrho \big(A_1(a,b)^\top\big),\quad
A_1=\begin{pmatrix}1&-1\\0&1\\0&-1\end{pmatrix},\quad
A_2=\begin{pmatrix}1&1&-1\end{pmatrix},
\]
where $ \varrho$ is the component-wise ReLU activation. 
This costs size \(7\). Repeating this identity and using parallelization as in \cite[Proposition~2.3]{opschoor20} yields the stated bound.
\end{proof}

\vspace{0.1em}
\begin{proof}[Proof of Theorem~\ref{theorem:recursive_express_new}]
We proceed by backward induction. 
Constants \(C,\alpha,\tau > 0 \) below may change from line to line and may depend on \(n\), but independent of \(d,\delta,\varepsilon\).

\vspace{0.3em}
\paragraph{\it Step 1: Terminal time}
For \(n=N\), \(V_N^{i;d}=\Phi^{i;d}\). Let
\(
\bar\varepsilon:=\frac{\varepsilon}{c d^q(1+k_N d^{p_N})},
\;
\hat V_{N,\varepsilon}^{i;d}:=\hat\Phi_{\bar\varepsilon}^{i;d}.
\)
By Assumption~\ref{ass:other_new},
\(
\|\hat V_{N,\varepsilon}^{i;d}-V_N^{i;d}\|_{2,\rho_{N;d}}
\le \bar\varepsilon c d^q\bigl(1+\mathbb M_{\bar p}(\rho_{N;d})\bigr)
\le \varepsilon.
\)
Moreover,
\(
\size(\hat V_{N,\varepsilon}^{i;d})+\Growth(\hat V_{N,\varepsilon}^{i;d})
\le C d^\alpha \varepsilon^{-r}.
\)
Thus the claim holds at \(n=N\).

\vspace{0.3em}
\paragraph{\it Step 2: Induction hypothesis and continuation value}
Suppose that the statement is true at time \(n+1\), and fix a probability measure \(\rho_{n;d}\) with
\(
\mathbb M_{\bar p}(\rho_{n;d})\le k_n d^{p_n}.
\)
Define the push-forward measure
\(
\hat\rho_{n+1;d}:=(\rho_{n;d}\otimes\mathbb P)\circ (P_{t_n}^{t_{n+1};d})^{-1}.
\)
By Assumption~\ref{ass:dynamic_ass},
\[
\mathbb M_{\bar p}(\hat\rho_{n+1;d})
\le \|\Growth(P_{t_n}^{t_{n+1};d}(*,\cdot))\|_{L^p}
\bigl(1+\mathbb M_{\bar p}(\rho_{n;d})\bigr)
\le k_{n+1} d^{p_{n+1}}  .
\]
Hence, for every \(j\in\mathcal J\) and \(\delta\in(0,1]\), the induction hypothesis yields a deep ReLU network \(\hat V_{n+1,\delta}^{j;d}\) such that
\[
\|\hat V_{n+1,\delta}^{j;d}-V_{n+1}^{j;d}\|_{2,\hat\rho_{n+1;d}}\le \delta,
\quad
\size(\hat V_{n+1,\delta}^{j;d})+\Growth(\hat V_{n+1,\delta}^{j;d})
\le C d^\alpha \delta^{-\tau}.
\]

Let
\(
C_n^{j;d}(x):=\E\big[V_{n+1}^{j;d}(X_{t_{n+1}}^{t_n,x;d})\big].
\)
As in \cite[Theorem~3]{ye2025deepmartingale}, consider
\(
\Gamma_{n,\delta,L}^{j;d}(x)
:=
\frac1L\sum_{l=1}^{L}
\hat V_{n+1,\delta}^{j;d}\!\big(P_{t_n}^{t_{n+1},l;d}(x,\cdot)\big),
\)
where \(P_{t_n}^{t_{n+1},l;d}\), \(l=1,\dots,L\), are i.i.d.\ copies of \(P_{t_n}^{t_{n+1};d}\). By similar estimation techniques in \cite{Jentzen23,gonon23},
\(
\E\big\|\Gamma_{n,\delta,L}^{j;d}
-\E\big[\hat V_{n+1,\delta}^{j;d}(X_{t_{n+1}}^{t_n,*;d})\big]\big\|_{2,\rho_{n;d}}
\le C d^\alpha \delta^{-\tau}L^{-1/2}.
\)
Choose
\(
L_\delta:= \lceil C d^\alpha \delta^{-2-2\tau} \rceil.
\)
By \cite[Proposition~4.14, Lemma~4.25]{ye2025deepmartingale}, we fix \(\omega_0\in\Omega\) such that
\[
\big\|\Gamma_{n,\delta,L_\delta}^{j;d}(\cdot,\omega_0)
-\E\big[\hat V_{n+1,\delta}^{j;d}(X_{t_{n+1}}^{t_n,*;d})\big]\big\|_{2,\rho_{n;d}}
\le C d^\alpha \delta,
\]
and simultaneously all sampled flow realizations have size and growth bounded by \(C d^\alpha \delta^{-\tau}\). By the composition and summation results of \cite{opschoor20,Gonon-Schwab2021-express,Jentzen23}, the map
\(
\gamma_{n,\delta}^{j;d}(x):=\Gamma_{n,\delta,L_\delta}^{j;d}(x,\omega_0)
\)
is a deep ReLU network satisfying
\(
\big\|\gamma_{n,\delta}^{j;d}
-\E\big[\hat V_{n+1,\delta}^{j;d}(X_{t_{n+1}}^{t_n,*;d})\big]\big\|_{2,\rho_{n;d}}
\le C d^\alpha \delta
\)
and
\(
\size(\gamma_{n,\delta}^{j;d})+\Growth(\gamma_{n,\delta}^{j;d})
\le C d^\alpha \delta^{-\tau}.
\)
Combining this with
\[
\big\|\E\big[\hat V_{n+1,\delta}^{j;d}(X_{t_{n+1}}^{t_n,*;d})\big]
-\E\big[V_{n+1}^{j;d}(X_{t_{n+1}}^{t_n,*;d})\big]\big\|_{2,\rho_{n;d}}
\le
\|\hat V_{n+1,\delta}^{j;d}-V_{n+1}^{j;d}\|_{2,\hat\rho_{n+1;d}}
\le \delta,
\]
we obtain
\(
\|\gamma_{n,\delta}^{j;d}-C_n^{j;d}\|_{2,\rho_{n;d}}
\le C d^\alpha \delta .
\)

\vspace{0.3em}
\paragraph{\it Step 3: Running payoff -- quadrature deep ReLU realization}
This is an additional tricky part compared with the continuation value approximation in \cite{gonon23,ye2025deepmartingale}. We first discretize the time integral, and then realize the resulting quadrature--Monte Carlo approximation by a deterministic deep ReLU network.

For \(j\in\mathcal J\), define
\(
R_n^{j;d}(x):=\E\!\big[\int_{t_n}^{t_{n+1}} f^{j;d}(s,X_s^{t_n,x;d})\,ds\big].
\)
For any \(B\in\mathbb N_+\), let \(\Delta s=(t_{n+1}-t_n)/B\) and \(s_b=t_n+b\Delta s\), \(b=0,\dots,B-1\), and define
\(
Q_{n,B}^{j;d}(x):=\sum_{b=0}^{B-1}\E\big[f^{j;d}(s_b,X_{s_b}^{t_n,x;d})\big]\Delta s.
\)
For \(s\in[s_b,s_{b+1}]\),
\[
\begin{aligned}
&\|f^{j;d}(s,X_s^{t_n,x;d})-f^{j;d}(s_b,X_{s_b}^{t_n,x;d})\|_{L^2} \\
&\le
\| f^{j;d}(s,X_s^{t_n,x;d})-f^{j;d}(s_b,X_s^{t_n,x;d})\|_{L^2}
+
\|f^{j;d}(s_b,X_s^{t_n,x;d})-f^{j;d}(s_b,X_{s_b}^{t_n,x;d})\|_{L^2} \\
&\le
(\Holdertwo f^{j;d}) \,(1+\|x\|)\sqrt{|s-s_b|}
+
\Lip f^{j;d}(s_b,\cdot)\,
\|X_s^{t_n,x;d}-X_{s_b}^{t_n,x;d}\|_{L^2} \\
&\le C d^\alpha (1+\|x\|)\sqrt{\Delta s},
\end{aligned}
\]
where we used Lemma~\ref{lem:Lip_Hol_lem_f_i} and Assumption~\ref{ass:dynamic_ass}. Integrating over each subinterval and then taking the \(L^2(\rho_{n;d})\)-norm gives
\(
\|R_n^{j;d}-Q_{n,B}^{j;d}\|_{2,\rho_{n;d}}
\le C d^\alpha B^{-1/2}.
\)
Hence, with
\(
B_\delta:= \lceil C d^\alpha \delta^{-2} \rceil,
\)
we obtain
\(
\|R_n^{j;d}-Q_{n,B_\delta}^{j;d}\|_{2,\rho_{n;d}}\le \delta .
\)

Next, approximate \(f^{j;d}\) by the network \(\hat f_\delta^{j;d}\) from Assumption~\ref{ass:other_new} and define
\(
\hat Q_{n,\delta}^{j;d}(x)
:=
\sum_{b=0}^{B_\delta-1}
\E\big[\hat f_\delta^{j;d}(s_b,P_{t_n}^{s_b;d}(x,\cdot))\big]\Delta s.
\)
Since
\(
|\hat f_\delta^{j;d}(s_b,x)-f^{j;d}(s_b,x)|\le \delta c d^q(1+\|x\|),
\)
Assumption~\ref{ass:dynamic_ass} yields
\(
\|\hat Q_{n,\delta}^{j;d}-Q_{n,B_\delta}^{j;d}\|_{2,\rho_{n;d}}
\le C d^\alpha \delta.
\)
Therefore,
\[
\|\hat Q_{n,\delta}^{j;d}-R_n^{j;d}\|_{2,\rho_{n;d}}
\le C d^\alpha \delta.
\]

We now approximate \(\hat Q_{n,\delta}^{j;d}\) by a deep ReLU network. Consider the Monte Carlo approximation
\(
\Lambda_{n,\delta,L}^{j;d}(x)
:=
\frac1L\sum_{l=1}^{L}\sum_{b=0}^{B_\delta-1}
\hat f_\delta^{j;d}\!\big(s_b,P_{t_n}^{s_b,l;d}(x,\cdot)\big)\Delta s,  
\)
where \(P_{t_n}^{s_b,l;d}\), \(l=1,\dots,L\), are i.i.d.\ copies of \(P_{t_n}^{s_b;d}\). Since \(\sum_{b=0}^{B_\delta-1}\Delta s=t_{n+1}-t_n\), the growth bound in Assumption~\ref{ass:other_new} 
imply
\[
\Big(
\int_{\mathbb R^d}
\E\Big|
\sum_{b=0}^{B_\delta-1}
\hat f_\delta^{j;d}(s_b,X_{s_b}^{t_n,x;d})\Delta s
\Big|^2 \rho_{n;d}(dx)
\Big)^{\frac{1}{2}}
\le C d^\alpha \delta^{-r}.
\]
Hence, by estimation techinques in \cite{Jentzen23,gonon23},
\(
\E\|\Lambda_{n,\delta,L}^{j;d}-\hat Q_{n,\delta}^{j;d}\|_{2,\rho_{n;d}}
\le C d^\alpha \delta^{-r}L^{-1/2}.
\)
Choose
\(
L_\delta^0:= \lceil C d^\alpha \delta^{-2-2r}(B_\delta+1)^2 \rceil.
\)
Then
\(
\E\|\Lambda_{n,\delta,L_\delta^0}^{j;d}-\hat Q_{n,\delta}^{j;d}\|_{2,\rho_{n;d}}
\le \frac{\delta}{B_\delta+1}.
\)

Moreover, by Assumption~\ref{ass:dynamic_ass},
\(
\E\big[\size(P_{t_n}^{s_b,l;d}(*,\cdot))\big]
+\E\big[\Growth(P_{t_n}^{s_b,l;d}(*,\cdot))\big]
\le c d^q
\)
uniformly in \(b,l\). Thus, \cite[Lemma~4.25]{ye2025deepmartingale} yields an \(\omega_0\in\Omega\) such that
\[
\|\Lambda_{n,\delta,L_\delta^0}^{j;d}(\cdot,\omega_0)-\hat Q_{n,\delta}^{j;d}\|_{2,\rho_{n;d}}
\le C d^{\alpha} \delta,
\]
and simultaneously
\[
\max_{\substack{0\le b\le B_\delta-1\\1\le l\le L_\delta^0}}
\big\{
\size(P_{t_n}^{s_b,l;d}(*,\omega_0))
+
\Growth(P_{t_n}^{s_b,l;d}(*,\omega_0))
\big\}
\le C d^\alpha \delta^{-\tau}.
\]

For each \(b\), \(\hat f_\delta^{j;d}\) yields a deep ReLU network
\(
\hat f_{b,\delta}^{j;d}(x):=\hat f_\delta^{j;d}(s_b,x),
\;
\size(\hat f_{b,\delta}^{j;d})\le \size(\hat f_\delta^{j;d}),
\)
see \cite[Lemma~4.9]{gonon23}. Therefore, by \cite[Proposition~2.2]{opschoor20}, \cite[Lemma~3.2]{Gonon-Schwab2021-express}, the deterministic map
\(
\lambda_{n,\delta}^{j;d}(x):=\Lambda_{n,\delta,L_\delta^0}^{j;d}(x,\omega_0)
\)
is a deep ReLU network. Moreover,
\(
\|\lambda_{n,\delta}^{j;d}-\hat Q_{n,\delta}^{j;d}\|_{2,\rho_{n;d}}\le C d^{\alpha} \delta,
\)
and after summing up the bounds over all \(b\) and \(l\),
\[
\size(\lambda_{n,\delta}^{j;d})+\Growth(\lambda_{n,\delta}^{j;d})
\le C d^\alpha \delta^{-\tau}.
\]
Combining with the bound for \(\hat Q_{n,\delta}^{j;d}-R_n^{j;d}\), we obtain
\(
\|\lambda_{n,\delta}^{j;d}-R_n^{j;d}\|_{2,\rho_{n;d}}
\le C d^\alpha \delta.
\)

\paragraph{\it Step 4: Assemble into maximum operator}
By \eqref{eq:primal_DPP_value_function}, for \(i,j\in\mathcal J\), we define
\[
F_n^{ij;d}(x):=
R_n^{j;d}(x)+C_n^{j;d}(x)-l_{ij}^d(t_n,x),
\quad
V_n^{i;d}(x)=\max_{j\in\mathcal J}F_n^{ij;d}(x)
\]
 Let \(\hat l_{ij,\delta}^{n;d}\) be the deep ReLU approximation of \(l_{ij}^d(t_n,\cdot)\) from Assumption~\ref{ass:other_new}, and set
\[
\varphi_{n,\delta}^{ij;d}
:=
\lambda_{n,\delta}^{j;d}
+\gamma_{n,\delta}^{j;d}
-\hat l_{ij,\delta}^{n;d},
\quad
\hat V_{n,\varepsilon}^{i;d}
:=
\max_{j\in\mathcal J}\varphi_{n,\delta}^{ij;d}.
\]
From the previous estimates and Assumption~\ref{ass:other_new},
\(
\|\varphi_{n,\delta}^{ij;d}-F_n^{ij;d}\|_{2,\rho_{n;d}}
\le C d^\alpha \delta
\)
uniformly in $i,j$.
Using
\(
|\max_j a_j-\max_j b_j | \le \max_j |a_j-b_j|,
\)
we have
\(
\|\hat V_{n,\varepsilon}^{i;d}-V_n^{i;d}\|_{2,\rho_{n;d}}
\le C d^\alpha \delta.
\)
Choose
\(
\delta:=\frac{\varepsilon}{C d^\alpha}\in(0,1]
\)
(enlarging \(C,\alpha\) if necessary for $C d^\alpha \ge 1$). 
Then
\[
\|\hat V_{n,\varepsilon}^{i;d}-V_n^{i;d}\|_{2,\rho_{n;d}}\le \varepsilon.
\]

Finally, the closure properties of ReLU networks under composition, finite sums
\cite{opschoor20,Gonon-Schwab2021-express}, together with Lemma~\ref{lem:nn_realization_maximum}, imply
\[
\size(\hat V_{n,\varepsilon}^{i;d})+\Growth(\hat V_{n,\varepsilon}^{i;d})
\le c_n d^{q_n}\varepsilon^{-\tau_n}
\]
for suitable constants \(c_n,q_n,\tau_n\). This completes the induction.
\end{proof}

\begin{proof}[Proof of Theorem~\ref{thm:express_deep_mtg}]
Let \(K_{\varepsilon;d}\) be given by \Cref{thm:express_N_0_new} with accuracy \(\varepsilon^2/4\); then
\(
K_{\varepsilon;d}\le {b^{*}}d^{q^{*}}\varepsilon^{-2} / 4 ,
\)
and for all \(m\in\N^{-1}\), \(j\in\mathcal J\),
\[
\E\!\Big[\sum_{k=0}^{K_{\varepsilon;d}-1}\int_{t_k^m}^{t_{k+1}^m}
\|\overline Z_s^{j;d}-\hat Z_{t_k^m}^{j;K_{\varepsilon;d},d}\|^2\,ds\Big]
\le \frac{\varepsilon^2}{4}.
\]
Next, apply \Cref{theorem:joint nerual network realization_new} with \(K=K_{\varepsilon;d}\) and accuracy \(\varepsilon/2\) to obtain networks \(\tilde z_{m,\varepsilon}^{j;d}:=\tilde z_{m,\varepsilon/2}^{j;K_{\varepsilon;d},d}\) such that
\(
\|\tilde z_{m,\varepsilon}^{j;d}-z_m^{j;K_{\varepsilon;d},d}\|_{2,\mu_m^{K_{\varepsilon;d};d}} 
\le \frac{\varepsilon}{2},
\)
and
\(
\size(\tilde z_{m,\varepsilon}^{j;d})
+\Growth(\tilde z_{m,\varepsilon}^{j;d}(t,\cdot))
\le C d^{Q}\varepsilon^{-R},
\; t\in[t_m,t_{m+1}),
\)
after absorbing the factor \(K_{\varepsilon;d}^{\bar m_m}\) into the exponents.

Using \eqref{eq_Z}, and since \(\overline Y_{t_n}^{i;d}=\widetilde U_n^{i;d}(\overline M^d)\), the same estimate for \Cref{lem:error-propagate} gives
\[
\begin{aligned}
& \Big(\E\big[\max_{i\in\mathcal J}
|\widetilde U_n^{i;d}(\widetilde M_\varepsilon^d)-\overline Y_{t_n}^{i;d}|^2\big]\Big)^{\frac{1}{2}} \le
\sum_{j=1}^J\sum_{m=n}^{N-1}
\|\tilde z_{m,\varepsilon}^{j;d}-z_m^{j;K_{\varepsilon;d},d}\|_{2,\mu_m^{K_{\varepsilon;d};d}} \\
& \qquad \qquad \qquad+
\sum_{j=1}^J\sum_{m=n}^{N-1}
\Big(\E\!\Big[\sum_{k=0}^{K_{\varepsilon;d}-1}\int_{t_k^m}^{t_{k+1}^m}
\|\hat Z_{t_k^m}^{j;K_{\varepsilon;d},d}-\overline Z_s^{j;d}\|^2\,ds\Big]\Big)^{\frac{1}{2}}\\
& \le
\frac12 J(N-n)\varepsilon+\frac12 J(N-n)\varepsilon
=
J(N-n)\varepsilon.
\end{aligned}
\]
This proves the approximation bound, and the expressivity bound follows after maximizing over \(n\in\N^{-1}\).
\end{proof}

\begin{proof}[Proof of Proposition~\ref{lem:delta_deephedge}]
We have
\(
\widetilde u_n^{i;d}(t,X_t^{t_n,x;d})
=
\E\!\big[V_{n+1}^{i;d}(X_{t_{n+1}}^{t_n,x;d})\,|\,\mathcal F_t\big]
\)
by the Markov property, 
hence \(\widetilde u_n^{i;d}(t,X_t^{t_n,x;d})\) is a martingale on \([t_n,t_{n+1}]\). Moreover, 
\[
d\widetilde u_n^{i;d}(t,X_t^{t_n,x;d})
=
(\partial_t\widetilde u_n^{i;d}+\mathcal L^d\widetilde u_n^{i;d})(t,X_t^{t_n,x;d})\,dt
+
(\nabla_x\widetilde u_n^{i;d}\,\sigma^d)(t,X_t^{t_n,x;d})\cdot dW_t^d
\]
by It\^o's formula, 
where \(\mathcal L^d\) is the generator of \(X^{d}\). Since the left-hand side is a martingale, the drift vanishes, and the martingale integrand is \(\overline Z^{i;d}\). The identity for \(\Pi_t^{i,n;d}\) is immediate when \(\sigma^d(t,x)\) is invertible.
\end{proof}



\clearpage
\section*{Supplementary Material}

\section{Supplementary results for iterative stopping problem and its duality}\label{sec:iterative-stopping-supplement}
The reduction of an optimal switching problem to an iterated optimal stopping formulation is well established in the literature. In continuous time, we refer to \cite{Hamadene-Djehiche-09,Martyr-16-signed-switching}. Here we state the corresponding formulation in discrete time, following \cite[Theorem~3.1]{martyr16-discrete-switching}.

\begin{lemma}[Equivalence to iterative optimal stopping]\label{lem:equivalent-primal-iterative-stopping-supplement}
For any $ i\in\mathcal{J} $, $ n\in \overline{N} $,
\begin{equation}\label{eq:equivalent-primal} 
\overline{Y}^{i}_{t_n}
= \esssup_{\tau\in\mathcal{T}_{n}}
\mathbb{E}_{t_n}\Big[
\int_{t_n}^{t_\tau} f^i(s) ds
+ \overline{\mathcal{R}}^{i}_{\tau} 1_{(\tau<N)}
+ \Phi^{i}1_{(\tau=N)}
\Big].
\end{equation}
\end{lemma}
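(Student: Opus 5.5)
The plan is to derive the iterated-stopping representation from the regime-decision form (Theorem~\ref{thm:switching-equiv-mode-decide-primal}). The idea is to split any regime-decision $d\in\mathcal D_n^i$ at its first switching date. The argument is a backward induction in $n$ that proves both inequalities at once. The recursion in the statement refers to $\overline Y^j$ for all $j$ through $\overline{\mathcal R}^i_\tau$, which is evaluated at the random times $\tau\ge n$. I will therefore use the standard discrete-time approach of \cite[Theorem~3.1]{martyr16-discrete-switching}, adapted to running payoffs $f^i$.

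\textbf{Upper bound.} Fix $d\in\mathcal D_n^i$ and set
\[
\tau:=\inf\{m\in\overline N_n^{-1}: d_m\neq i\}\wedge N,
\]
which is an $\mathbb F_n$-stopping time. On $\{\tau<N\}$ the tail $(d_m)_{m\ge\tau}$ is a regime decision starting at $\tau$ from regime $i$, with first choice $d_\tau=j\neq i$. Its continuation reward is
\[
\E_{t_\tau}\Big[\cdots\Big]=L^i_\tau\big((d_m)_{m\ge \tau}\big)\le \overline Y^{d_\tau}_{t_\tau}-l_{i d_\tau}(t_\tau)\le \overline{\mathcal R}^i_\tau .
\]
Here I use the one-step form \eqref{eq:lower_bound_DPP}: the cost $-l_{id_\tau}$ is paid at time $\tau$, and the remainder $L^{d_\tau}$-type payoff is bounded by $\overline Y^{d_\tau}_{t_\tau}$ through the regime-decision characterisation at the (random) time $\tau$. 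This characterisation is obtained by pasting over the finite partition $\{\tau=m,d_\tau=j\}$. Before $\tau$ the regime is $i$, so no cost is paid and the reward is $\int_{t_n}^{t_\tau}f^i$. On $\{\tau=N\}$ the terminal payoff is $\Phi^i$. Conditioning with the tower property gives $L_n^i(d)\le \E_{t_n}[\int_{t_n}^{t_\tau}f^i\,ds+\overline{\mathcal R}^i_\tau1_{(\tau<N)}+\Phi^i1_{(\tau=N)}]$. Taking the essential supremum yields the ``$\le$'' direction.

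\textbf{Lower bound.} Fix $\tau\in\mathcal T_n$ and $\epsilon>0$. At each $m\in\overline N_n^{-1}$ and for each $j\neq i$, choose an $\mathcal F_{t_m}$-measurable selection of the maximizer $\iota$ of $\mathcal R^{i,j}_m$ (a finite argmax). Then choose $\epsilon$-optimal decisions $d^{j,m}\in\mathcal D^j_m$ with $L^j_m(d^{j,m})\ge \overline Y^j_{t_m}-\epsilon$. This is possible because the family $\{L^j_m(d)\}$ is upward directed, via the usual pasting on the events $\{L(d)\ge L(d')\}$, so the essential supremum is approached by a sequence; a countable pasting then gives an $\epsilon$-optimal element.

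Next concatenate: use regime $i$ on $[n,\tau)$; at $\tau<N$ switch to $\iota_\tau$ and follow $d^{\iota_\tau,\tau}$. This is adapted, lies in $\mathcal D_n^i$, and its reward is at least the stopping payoff minus $\epsilon$. Letting $\epsilon\downarrow0$ gives ``$\ge$''.

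The main obstacle is this $\epsilon$-optimal measurable pasting at the random time $\tau$, together with checking that the cost bookkeeping in $L$ is correct: the cost $l_{i\iota}(t_\tau)$ is charged exactly once, and $d_N=d_{N-1}$ is preserved. I handle both by partitioning on the finitely many values of $(\tau,\iota_\tau)$ and using the lattice property.
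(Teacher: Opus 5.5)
Your ``$\le$'' direction is sound, and it uses no structure of the costs. Deriving the lemma from Theorem~\ref{thm:switching-equiv-mode-decide-primal} is a reasonable self-contained substitute for the paper, which only cites \cite[Theorem~3.1]{martyr16-discrete-switching}.

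The gap is in the ``$\ge$'' direction, at the concatenation step. Your $\epsilon$-optimal continuation $d^{\iota_\tau,\tau}\in\mathcal D^{\iota_\tau}_\tau$ is an arbitrary element of that class. Nothing prevents it from switching at $t_\tau$ itself, i.e.\ $k:=d^{\iota_\tau,\tau}_\tau\neq\iota_\tau$ on a non-null set. A regime decision cannot switch twice at one date. So your concatenated $d$ goes directly from $i$ to $k$ at $t_\tau$ and is charged $l_{ik}(t_\tau)$. On $\{\tau<N\}$ its conditional reward from $\tau$ on is therefore
\[
L^{\iota_\tau}_\tau\big(d^{\iota_\tau,\tau}\big)+l_{\iota_\tau k}(t_\tau)-l_{ik}(t_\tau),
\]
not $L^{\iota_\tau}_\tau(d^{\iota_\tau,\tau})-l_{i\iota_\tau}(t_\tau)$. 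To bound it below by $\overline Y^{\iota_\tau}_{t_\tau}-l_{i\iota_\tau}(t_\tau)-\epsilon=\overline{\mathcal R}^i_\tau-\epsilon$, you need $l_{ik}\le l_{i\iota}+l_{\iota k}$. So the cost is not ``charged exactly once'' in the sense your estimate requires. Partitioning over $(\tau,\iota_\tau)$ and using the lattice property does not address this.

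The missing ingredient is the triangular condition (ii) on the switching costs, which your proposal never invokes. Its non-strict form suffices: the case $k=\iota$ is trivial, and $k=i$ uses $l_{ii}=0<l_{i\iota}+l_{\iota i}$. This is not cosmetic, because without the condition the lemma fails. Take $N=1$, $f\equiv0$, $\mathcal J=\{1,2,3\}$, $\Phi^1=\Phi^2=0$, $\Phi^3=100$, $l_{12}=l_{23}=1$, $l_{13}=10$, and all other off-diagonal costs equal to $1000$. Then $\overline Y^j_{t_0}=\max_k(\Phi^k-l_{jk})$ gives $\overline Y^1_{t_0}=90$, $\overline Y^2_{t_0}=99$ and $\overline Y^3_{t_0}=100$. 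The right-hand side of the lemma at $n=0$ is $\max\{\Phi^1,\overline{\mathcal R}^1_0\}=\max\{0,98\}=98$. The iterated-stopping recursion implicitly allows the instantaneous chain $1\to2\to3$, which the switching problem forbids. Inserting the triangle inequality at the concatenation step closes your argument.
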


By the Snell envelope results in \cite[Proposition~3.1, Lemma~A.1]{martyr16-discrete-switching}, it follows from \eqref{eq:equivalent-primal} that for all \(i\in\mathcal{J}\) and \(n\in \overline{N}\),
\begin{equation}\label{eq:snell-envelope-shifted}
\begin{aligned}
  &  \overline{Y}^{i}_{t_n} + \int_{0}^{t_n} f^i(s) ds  = \esssup_{\tau\in\mathcal{T}_{n}}
\mathbb{E}_{t_n}\Big[
\int_{0}^{t_\tau} f^i(s) ds
+ \overline{\mathcal{R}}^{i}_{\tau} 1_{(\tau<N)}
+ \Phi^{i}1_{(\tau=N)}
\Big].
\end{aligned}
\end{equation}
Consequently, we obtain the following supermartingale domination property.

\begin{lemma}\label{lem:supermartingale_envelop}
For each \(i\in\mathcal{J}\), the process
\[
\Big(\overline{Y}^{i}_{t_n} + \int_{0}^{t_n} f^i(s) ds\Big)_{n=0}^{N}
\]
is the smallest supermartingale dominating
\[
\Big(\int_{0}^{t_n} f^i(s) ds 
+ \overline{\mathcal{R}}^{i}_{n} 1_{(n<N)}
+ \Phi^{i}1_{(n=N)}\Big)_{n=0}^{N}.
\]
In particular, for \(n\in \overline{N}\),
\begin{equation}\label{eq:supermartingale-dominate}
\overline{Y}^{\,i}_{t_n}
\ge \overline{\mathcal{R}}^{i}_{n}  1_{(n<N)}
+ \Phi^{i}1_{(n=N)}.
\end{equation}
Moreover, for any discrete stopping time \(\tau\in\mathcal{T}^N\),
\begin{equation}\label{eq:supermartingale-dominate-stopping}
\overline{Y}^{i}_{t_\tau}
\ge  \overline{\mathcal{R}}^{i}_{\tau} 1_{(\tau<N)}
+ \Phi^{i}1_{(\tau=N)}.
\end{equation}
Finally,
\begin{equation}\label{eq:tau-star-i-n}
\tau^{*,i}_n
:= \inf\Big\{m\in \overline{N}_n :
\overline{Y}^{i}_{t_m}
= \overline{\mathcal{R}}^{i}_{m} 1_{(m<N)}
+ \Phi^{i}1_{(m=N)}
\Big\}
\end{equation}
is an optimal stopping time for \eqref{eq:equivalent-primal}.
\end{lemma}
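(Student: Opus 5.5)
The plan is to obtain everything from the shifted Snell envelope representation \eqref{eq:snell-envelope-shifted}, applying the classical discrete-time Snell envelope theory to a suitably chosen reward process. For fixed \(i\in\mathcal J\), set
\[
G_n:=\int_0^{t_n} f^i(s)\,ds+\overline{\mathcal R}^i_n1_{(n<N)}+\Phi^i1_{(n=N)},\qquad S_n:=\overline Y^i_{t_n}+\int_0^{t_n}f^i(s)\,ds .
\]
First I will check integrability. We have \(f^i\in\mathbb L^1\), \(\Phi^i\in L^1\) and \(l_{ij}(t_n)\in L^1\). In addition \(\overline Y^j_{t_n}\in L^1\) for every \(j\): it is bounded below by the payoff of the no-switch control and above by the integrable sum of absolute values of all primitives, because the triangular condition makes the costs effectively nonnegative along an optimal path. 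An alternative is to cite \cite[Lemma~A.1]{martyr16-discrete-switching}. Hence \(G\) is an \(\mathbb F^N\)-adapted integrable process, and \eqref{eq:snell-envelope-shifted} says precisely that \(S_n=\esssup_{\tau\in\mathcal T_n}\E_{t_n}[G_\tau]\).

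With \(G\) integrable, the standard Snell envelope theorem for finite discrete horizon (backward induction \(S_N=G_N\), \(S_n=\max(G_n,\E_{t_n}[S_{n+1}])\)) yields the main characterization. It shows that \(S\) is the smallest supermartingale dominating \(G\), and that \(\tau^*_n=\inf\{m\ge n:S_m=G_m\}\) is optimal. The backward recursion itself comes from the usual pasting argument over \(\{\tau=n\}\) and \(\{\tau>n\}\) together with the tower property. The second identity is exactly \eqref{eq:tau-star-i-n} once the common term \(\int_0^{t_m}f^i\) is cancelled from both sides.

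Next, the domination claim \eqref{eq:supermartingale-dominate} follows from \(S_n\ge G_n\) (take \(\tau=n\)) after the same cancellation. For \eqref{eq:supermartingale-dominate-stopping}, I will decompose on the finite partition \(\{\tau=m\}\), \(m\in\overline N\). On each piece the pointwise inequality \(S_m\ge G_m\) holds a.s., so \(S_\tau\ge G_\tau\) a.s., and subtracting \(\int_0^{t_\tau}f^i\) gives the claim.

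The main obstacle is rigor in the optimality statement for \(\tau^{*,i}_n\): one must show that the stopped process \(S_{\cdot\wedge\tau^*_n}\) is a martingale from \(n\). This holds because on \(\{\tau^*_n>m\}\) we have \(S_m>G_m\), so \(S_m=\E_{t_m}[S_{m+1}]\), and summing increments gives \(S_n=\E_{t_n}[S_{\tau^*_n}]=\E_{t_n}[G_{\tau^*_n}]\). The infimum is attained because \(S_N=G_N\), which keeps \(\tau^*_n\le N\). All other steps are routine.
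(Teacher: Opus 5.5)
Your proposal is correct and follows essentially the same route as the paper, which also obtains this lemma directly from classical finite-horizon Snell envelope theory (via \cite[Proposition~3.1, Lemma~A.1]{martyr16-discrete-switching}) applied to the shifted reward in \eqref{eq:snell-envelope-shifted}; your backward recursion, stopped-martingale argument, and partition over $\{\tau=m\}$ simply spell out those standard steps. One small correction: the integrable upper bound for $\overline Y^j_{t_n}$ does not come from the triangular condition, which does not make costs nonnegative since an individual $l_{jk}$ may be negative; it comes from there being at most one switch per grid date among finitely many regimes, so every admissible payoff is dominated by $\sum_{k}\int_0^T|f^k(s)|\,ds+\sum_{m}\sum_{j,k}|l_{jk}(t_m)|+\sum_k|\Phi^k|$.
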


\section{Supplementary measurability results for stopping time $ \overline{m}(n,i) $ and regime process $ j(n,i) $}

\begin{lemma}\label{lem:stopping-time}
For any \(i\in\mathcal{J}\) and \(n\in\overline{N}\), the random time \(\overline{m}(n,i)\) is an \(\mathbb{F}_n\)-stopping time.
\end{lemma}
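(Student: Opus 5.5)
The plan is to show that $\overline m(n,i)=\tau_n^i$ is a stopping time by showing that it coincides almost surely with the Snell-envelope stopping time $\tau^{*,i}_n$ of \eqref{eq:tau-star-i-n}, which is visibly an $\mathbb F_n$-stopping time. Equivalently, the event $\{\overline m(n,i)=m\}$ is, up to null sets, characterized through $\mathcal F_{t_m}$-measurable quantities only. Since the filtration is augmented (complete), an a.s.\ modification of a stopping time is still a stopping time, so a.s.\ equality suffices.

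\emph{Step 1 (time-$m$ form of $\overline U_{n,m}^i$).} For $m\in\overline N_n$ define $G_m:=\int_0^{t_m} f^i(s)\,ds+\overline{\mathcal R}^i_m1_{(m<N)}+\Phi^i1_{(m=N)}$ and $S_m:=\overline Y^i_{t_m}+\int_0^{t_m}f^i(s)\,ds$. By \eqref{eq:iterative_dual_upper_two_regime} and the Doob decomposition $\overline M^i_{t_m}=\overline Y^i_{t_m}-\overline Y^i_0+\overline A^i_{t_m}$, we get
\[
\overline U_{n,m}^i(\overline M^i)=S_n+\overline A^i_{t_n}+(G_m-S_m)-\overline A^i_{t_m}.
\]
Here $G_m-S_m\le0$ by \eqref{eq:supermartingale-dominate}, and $\overline A^i$ is nondecreasing, so $m\mapsto \overline U_{n,m}^i(\overline M^i)-(S_n+\overline A^i_{t_n})=(G_m-S_m)-(\overline A^i_{t_m}-\overline A^i_{t_n})$ is a sum of two nonpositive terms.

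\emph{Step 2 (identify the argmax).} By Lemma~\ref{lem:dual_iterative}, $\max_m\overline U_{n,m}^i(\overline M^i)=\overline Y^i_{t_n}=S_n-\int_0^{t_n}f^i\,ds$, and this equals $S_n+\overline A^i_{t_n}-\overline A^i_{t_n}$ after the same bookkeeping. Hence the maximum value is attained exactly at those $m$ with $G_m=S_m$ and $\overline A^i_{t_m}=\overline A^i_{t_n}$. The smallest maximizer is therefore
\[
\overline m(n,i)=\inf\{m\ge n: \overline Y^i_{t_m}=\overline{\mathcal R}^i_m1_{(m<N)}+\Phi^i1_{(m=N)},\ \overline A^i_{t_m}=\overline A^i_{t_n}\}.
\]

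\emph{Step 3 (measurability).} Since $\overline A^i$ is predictable, $\overline A^i_{t_m}$ is $\mathcal F_{t_{m-1}}$-, hence $\mathcal F_{t_m}$-measurable. All the other quantities ($\overline Y^j_{t_m}$, $l_{ij}(t_m)$, $\Phi^i$ when $m=N$) are $\mathcal F_{t_m}$-measurable. Therefore
\[
\{\overline m(n,i)\le m\}=\bigcup_{k=n}^m\{G_k=S_k,\ \overline A^i_{t_k}=\overline A^i_{t_n}\}\in\mathcal F_{t_m}.
\]
At $m=N$ the first condition always holds, so the infimum is attained and $\overline m(n,i)\le N$.

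The main obstacle is Step 2, namely making sure the maximum in Lemma~\ref{lem:dual_iterative} is attained pathwise and not just a.s. This can be handled by stating the identities on a full-measure set and invoking completeness. It is worth noting that $\overline m(n,i)$ then equals $\tau^{*,i}_n$ a.s., since by optimality of $\tau^{*,i}_n$ the compensator does not increase before it; this is an alternative route, but the direct event description above already suffices.
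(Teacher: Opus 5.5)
Your overall strategy of describing $\{\overline m(n,i)\le m\}$, up to null sets, through $\mathcal F_{t_m}$-measurable quantities is sound. However, the identity in Step~1 is wrong, and Step~2 depends on it.

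If you insert the decomposition you quote, $\overline M^i_{t_m}=\overline Y^i_{t_m}-\overline Y^i_0+\overline A^i_{t_m}$, into \eqref{eq:iterative_dual_upper_two_regime}, you get
\[
\overline U^i_{n,m}(\overline M^i)=\overline Y^i_{t_n}+\int_{t_n}^{t_m}f^i(s)\,ds+(G_m-S_m)-\bigl(\overline A^i_{t_m}-\overline A^i_{t_n}\bigr).
\]
So an $m$-dependent, sign-indefinite term survives. Moreover, the compensator of $\overline Y^i$ itself need not be nondecreasing, since only $S=\overline Y^i+\int_0^{\cdot}f^i$ is a supermartingale (\Cref{lem:supermartingale_envelop}).

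The decomposition that makes your sign argument work, and under which \Cref{lem:dual_iterative} holds, is that of the Snell envelope, $S_m=S_0+\overline M^i_{t_m}-\overline A^i_{t_m}$. With it,
\[
\overline U^i_{n,m}(\overline M^i)=\overline Y^i_{t_n}+(G_m-S_m)-\bigl(\overline A^i_{t_m}-\overline A^i_{t_n}\bigr).
\]
That is, the $\mathcal F_{t_n}$-measurable offset is $\overline Y^i_{t_n}=S_n-\int_0^{t_n}f^i\,ds$, not $S_n$. Your Step~2 claim that $S_n-\int_0^{t_n}f^i\,ds$ equals $S_n+\overline A^i_{t_n}-\overline A^i_{t_n}$ is false unless $\int_0^{t_n}f^i\,ds=0$. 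Yet that is exactly the ``maximum $=$ offset'' fact on which your identification of the argmax rests.

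With the corrected offset, \Cref{lem:dual_iterative} gives maximum $=$ offset a.s., and Steps~2--3 then go through. You can also avoid that lemma: at $m=\tau^{*,i}_n$ both nonpositive terms vanish, because $\overline A^i$ is flat before the first contact time. Relatedly, ``at $m=N$ the first condition always holds'' does not show your set is nonempty, since $\overline A^i_{t_N}=\overline A^i_{t_n}$ may fail. Nonemptiness comes from maximum $=$ offset.

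Even repaired, your argument is a detour compared with the paper's. The paper only uses that the non-adapted maximum $\max_m\overline U^i_{n,m}(\overline M^i)$ equals the $\mathcal F_{t_n}$-measurable $\overline Y^i_{t_n}$ a.s. Then
\(\{\overline m(n,i)=m\}=\{\overline Y^i_{t_n}=\overline U^i_{n,m}(\overline M^i)\}\cap\bigcap_{k=n}^{m-1}\{\overline Y^i_{t_n}>\overline U^i_{n,k}(\overline M^i)\}\) a.s. Each $\overline U^i_{n,k}(\overline M^i)$ with $k\le m$ is already $\mathcal F_{t_k}$-measurable, so no decomposition or sign analysis is needed. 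What your route buys, once fixed, is the explicit identification $\overline m(n,i)=\tau^{*,i}_n$ a.s., which the paper proves separately in \Cref{lem:DPP-dual-stopping-optimal}.
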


\begin{proof}
Fix \(i\in\mathcal{J}\) and \(n\in\overline{N}\). For any \(m\in\overline{N}_n\) and \(k=n,\ldots,m-1\), we have
\begin{equation}\label{eq:dual-stopping-set-equivalent}
\overline{m}(n,i)=m
\;\Leftrightarrow\;
\overline{Y}^{i}_{t_n}=\overline{U}^{i}_{n,m}(\overline{M}^i)
\ \text{and}\
\overline{Y}^{i}_{t_n}>\overline{U}^{i}_{n,k}(\overline{M}^i)\ \text{for all }k=n,\ldots,m-1.
\end{equation}
The event on the right-hand side of \eqref{eq:dual-stopping-set-equivalent} is \(\mathcal{F}_{t_m}\)-measurable. Hence \(\{\overline{m}(n,i)=m\}\in\mathcal{F}_{t_m}\) for every \(m\in\overline{N}_n\), which proves that \(\overline{m}(n,i)\) is a stopping time.
\end{proof}

\begin{lemma}[Dynamic Programming principle and optimality of \(\overline{m}(n,i)\)]\label{lem:DPP-dual-stopping-optimal}
For any \(n\in \overline{N} \) and \(i\in\mathcal{J}\), the stopping times \(\overline{m}(n,i)\) satisfy the dynamic programming identity
\[
\overline{m}(n,i)=n\,1_{(\overline{m}(n,i)=n)}+\overline{m}(n+1,i)\,1_{(\overline{m}(n,i)>n)}.
\]
Moreover, \(\overline{m}(n,i)\) is optimal for \eqref{eq:equivalent-primal} and admits the following representation \(\mathbb{P}\)-a.s.:
\begin{equation}\label{eq:dual-stopping-representation}
\overline{m}(n,i)
= \inf\Big\{m\in \overline{N}_n :
\overline{Y}^{i}_{t_m}
= \overline{\mathcal{R}}^{i}_{m} 1_{(m<N)}+\Phi^{i}1_{(m=N)}
\Big\} ,
\end{equation}
and, in particular,  
\begin{equation}\label{eq:stopping-equality-dual}
\overline{Y}^{i}_{t_{\overline{m}(n,i)}}
= \overline{\mathcal{R}}^{i}_{\overline{m}(n,i)}  1_{(\overline{m}(n,i)<N)}+\Phi^{i}1_{(\overline{m}(n,i)=N)}.
\end{equation}
\end{lemma}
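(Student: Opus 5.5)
The plan is to reduce everything to the classical Snell-envelope structure of the shifted process
\[
S^i_{t_m}:=\overline Y^i_{t_m}+\int_0^{t_m} f^i(s)\,ds ,
\]
and then read the statement off the explicit form of \(\overline U^i_{n,m}(\overline M^i)\). By Lemma~\ref{lem:supermartingale_envelop}, \(S^i\) is the Snell envelope of the obstacle
\[
G^i_m:=\int_0^{t_m} f^i(s)\,ds+\overline{\mathcal R}^i_m1_{(m<N)}+\Phi^i1_{(m=N)} .
\]
\begin{itemize}
\item I interpret \(\overline M^i,\overline A^i\) as the martingale and nondecreasing predictable parts of the Doob decomposition \(S^i_{t_m}=S^i_0+\overline M^i_{t_m}-\overline A^i_{t_m}\). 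This is the decomposition under which Lemma~\ref{lem:dual_iterative} is the Rogers duality.
\item The Snell recursion gives \(S^i_{t_m}=\max\{G^i_m,\E_{t_m}[S^i_{t_{m+1}}]\}\).
\item Hence \(\overline A^i_{t_{m+1}}-\overline A^i_{t_m}=S^i_{t_m}-\E_{t_m}[S^i_{t_{m+1}}]\ge 0\), and this increment vanishes on \(\{\overline Y^i_{t_m}>\overline{\mathcal R}^i_m\}\), because there \(S^i_{t_m}>G^i_m\).
\end{itemize}

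\emph{Step 1 (pathwise form of \(\overline U\)).} Substitute \(\overline M^i_{t_m}-\overline M^i_{t_n}=S^i_{t_m}-S^i_{t_n}+\overline A^i_{t_m}-\overline A^i_{t_n}\) into \eqref{eq:iterative_dual_upper_two_regime}. This gives, for every \(m\in\overline N_n\),
\[
\overline U^i_{n,m}(\overline M^i)=\overline Y^i_{t_n}-\big(\overline Y^i_{t_m}-\overline{\mathcal R}^i_m1_{(m<N)}-\Phi^i1_{(m=N)}\big)-\big(\overline A^i_{t_m}-\overline A^i_{t_n}\big).
\]
Both brackets are nonnegative a.s., the first by \eqref{eq:supermartingale-dominate} and the second by monotonicity of \(\overline A^i\). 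This recovers \(\max_m\overline U^i_{n,m}\le\overline Y^i_{t_n}\). It also shows that \(m\) belongs to the argmax iff \(\overline Y^i_{t_m}\) equals the obstacle at \(m\) \emph{and} \(\overline A^i_{t_m}=\overline A^i_{t_n}\).

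\emph{Step 2 (representation \eqref{eq:dual-stopping-representation}).} Let \(\tau^{*,i}_n\) be as in \eqref{eq:tau-star-i-n}. For \(n\le k<\tau^{*,i}_n\) we have \(\overline Y^i_{t_k}>\overline{\mathcal R}^i_k\), so the increments of \(\overline A^i\) vanish and \(\overline A^i_{t_m}=\overline A^i_{t_n}\) for all \(m\le\tau^{*,i}_n\). Thus \(\tau^{*,i}_n\) lies in the argmax. Any smaller \(m\) fails the obstacle-equality condition by the definition of \(\tau^{*,i}_n\). Hence \(\overline m(n,i)=\tau^{*,i}_n\) a.s. Then \eqref{eq:stopping-equality-dual} is immediate from the definition of \(\tau^{*,i}_n\) (note \(m=N\) always qualifies, since \(\overline Y^i_{t_N}=\Phi^i\)), and optimality for \eqref{eq:equivalent-primal} follows from the optimality of \(\tau^{*,i}_n\) in Lemma~\ref{lem:supermartingale_envelop}.

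\emph{Step 3 (DPP identity).} On \(\{\overline m(n,i)>n\}\), the representation gives \(\overline Y^i_{t_n}\neq\overline{\mathcal R}^i_n\) (for \(n<N\)). Therefore the infimum in \eqref{eq:dual-stopping-representation} over \(\overline N_n\) coincides with the infimum over \(\overline N_{n+1}\), which is \(\overline m(n+1,i)\). On \(\{\overline m(n,i)=n\}\) the identity is trivial.

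The main obstacle I expect is bookkeeping rather than substance. First, the paper states the Doob decomposition for \(\overline Y^i\) itself, so I must confirm that the version making Lemma~\ref{lem:dual_iterative} surely optimal is the one for the shifted process \(S^i\), and use that consistently. Second, the identities hold only \(\P\)-a.s. while \(\overline m(n,i)\) is defined pathwise through \(\inf\argmax\). I will handle this by working on a full-measure set on which \eqref{eq:supermartingale-dominate}, the Snell recursion and \(\overline A^i\)-monotonicity hold simultaneously for all finitely many \(n,m,i\).
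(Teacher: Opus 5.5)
Your argument is correct, but it runs in the opposite direction from the paper's proof.

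\textbf{The paper's route.} The paper never opens up the Doob decomposition. It uses the sure optimality $\overline Y^i_{t_k}=\overline U^i_k(\overline M^i)$ from \Cref{lem:dual_iterative} as a black box at every date. It first proves the DPP by splitting off the first step: for $m\in\overline N_{n+1}$, $\overline U^i_{n,m}(\overline M^i)=\overline U^i_{n+1,m}(\overline M^i)+\int_{t_n}^{t_{n+1}}f^i(s)\,ds-\Delta\overline M^i_{t_n}$. Hence on $\{\overline m(n,i)>n\}$ the argmax of $\overline U^i_{n,\cdot}$ coincides with that of $\overline U^i_{n+1,\cdot}$. It then reads off $\{\overline m(n,i)=n\}=\{\overline Y^i_{t_n}=\overline{\mathcal R}^i_n1_{(n<N)}+\Phi^i1_{(n=N)}\}$. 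This uses $\overline U^i_{n,n}(\overline M^i)=\overline{\mathcal R}^i_n1_{(n<N)}+\Phi^i1_{(n=N)}$ together with sure optimality at $n$. The representation then follows by backward induction through the DPP.

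\textbf{Your route.} You write $\overline U^i_{n,m}(\overline M^i)$ as $\overline Y^i_{t_n}$ minus the obstacle gap at $m$ minus the compensator increment on $[t_n,t_m]$. This gives the representation directly, without induction, and the DPP comes out as a corollary.

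\textbf{What each buys.} The paper's route is shorter and modular: it works for any martingale family that is surely optimal at every date. Yours is self-contained, since Steps~1--2 in fact re-prove the sure optimality in \Cref{lem:dual_iterative}. It is also more informative: $m$ is a maximizer iff the obstacle is touched at $m$ and $\overline A^i$ is flat on $[t_n,t_m]$.

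\textbf{The Doob decomposition.} Your reading of $\overline M^i$ as the Doob martingale of the shifted process $S^i$ is the correct one. With the decomposition of $\overline Y^i$ itself, as literally written in \Cref{subsec:dual_iterative_stopping}, the compensator need not be monotone. Sure optimality then fails in general whenever $\int_{t_n}^{t_{n+1}}f^i(s)\,ds$ is not $\mathcal F_{t_n}$-measurable. So the paper's proof silently relies on your reading through \Cref{lem:dual_iterative}.

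\textbf{An ordering remark.} The ``iff'' characterization at the end of Step~1 presupposes $\max_m\overline U^i_{n,m}(\overline M^i)=\overline Y^i_{t_n}$. That equality is only supplied in Step~2, by attainment at $\tau^{*,i}_n$, so nothing is actually missing.
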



\begin{proof}
Fix \(i\in\mathcal{J}\) and \(n\in\overline{N}\). On the event \(\{\overline{m}(n,i)>n\}\), the maximizer in \eqref{eq:equivalent-dual} is attained strictly after time \(t_n\), which implies
\(
\overline{Y}^{i}_{t_n}
> \overline{\mathcal{R}}^{i}_{n} 1_{(n<N)}.
\)
Using \eqref{eq:equivalent-dual} and separating the first step from \(t_n\) to \(t_{n+1}\), we obtain on \(\{\overline{m}(n,i)>n\}\),
\begin{align*}
\overline{Y}^{i}_{t_n}
= &\ \overline{U}^i_{n+1}(\overline{M}^i)
+ \int_{t_n}^{t_{n+1}} f^i(s)ds
+ \overline{M}^{i}_{t_n}-\overline{M}^{i}_{t_{n+1}} \\
= &\ \overline{Y}^{i}_{t_{n+1}}
+ \int_{t_n}^{t_{n+1}} f^i(s)ds
+ \overline{M}^{i}_{t_n}-\overline{M}^{i}_{t_{n+1}}.
\end{align*}
Consequently, on \(\{\overline{m}(n,i)>n\}\),
\begin{align*}
\overline{m}(n,i)
= &\ \inf\Big(\argmax_{m=n+1,\ldots,N} \overline{U}^{i}_{n+1,m}(\overline{M}^i)\Big)
= \overline{m}(n+1,i),
\end{align*}
which yields the stated DPP:
\(
\overline{m}(n,i)=n\,1_{(\overline{m}(n,i)=n)}+\overline{m}(n+1,i)\,1_{(\overline{m}(n,i)>n)}.
\)

Next, note that
\(
\overline{m}(n,i)=n
\ \Leftrightarrow\
\overline{Y}^{i}_{t_n}
= \overline{\mathcal{R}}^{i}_{n} 1_{(n<N)}+\Phi^{i}1_{(n=N)}.
\)
Assuming by backward induction that \eqref{eq:dual-stopping-representation} holds for \(\overline{m}(n+1,i)\), the DPP implies
\[
\overline{m}(n,i)
= \inf\Big\{m\in \overline{N}_n :
\overline{Y}^{i}_{t_m}
= \overline{\mathcal{R}}^{i}_{m} 1_{(m<N)}+\Phi^{i}1_{(m=N)}
\Big\},
\]
which proves \eqref{eq:dual-stopping-representation}. Optimality of \(\overline{m}(n,i)\) for \eqref{eq:equivalent-primal} follows from Lemma~\ref{lem:supermartingale_envelop}. Finally, \eqref{eq:stopping-equality-dual} is an immediate consequence of \eqref{eq:dual-stopping-representation}.
\end{proof}

\begin{lemma}\label{lem:measurability_mode_indicator_distinguish}
For any \(i\in\mathcal{J}\) and \(n \in \overline{N} \), the random variable \(j(n,i)\) is \(\mathcal{F}_{t_n}\)-measurable.
\end{lemma}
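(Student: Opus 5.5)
The claim concerns $j(n,i)=\iota_n^i$, defined in \eqref{eq:mode-decision-distinguish-def} as $\inf\{\argmax_{j\in\mathcal J^{-i}}\mathcal R_n^{i,j}\}$ on $\{n<N\}$ and as $i$ at $n=N$. The plan is to write $\iota_n^i$ as a finite combination of indicators of events that are visibly $\mathcal F_{t_n}$-measurable. The case $n=N$ is immediate, since $\iota_N^i\equiv i$ is a constant. So fix $n<N$ and $i\in\mathcal J$.

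\emph{Step 1 (measurability of the ingredients).} Each $\overline Y_{t_n}^j$ is an essential supremum of $\mathcal F_{t_n}$-conditional expectations in \eqref{eq:original-primal}, and is therefore $\mathcal F_{t_n}$-measurable. By assumption, $l_{ij}(t_n)\in L^1(\mathcal F_{t_n};\R^1)$. Hence every $\mathcal R_n^{i,j}=\overline Y_{t_n}^j-l_{ij}(t_n)$ is an $\mathcal F_{t_n}$-measurable real random variable, and so is the finite maximum $\overline{\mathcal R}_n^i=\max_{j\in\mathcal J^{-i}}\mathcal R_n^{i,j}$.

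\emph{Step 2 (event decomposition).} Since $\mathcal J^{-i}$ is finite and totally ordered, and the smallest maximizer is selected, we have for each $j\in\mathcal J^{-i}$
\[
\{\iota_n^i=j\}=\{\mathcal R_n^{i,j}=\overline{\mathcal R}_n^i\}\cap\bigcap_{k\in\mathcal J^{-i},\,k<j}\{\mathcal R_n^{i,k}<\overline{\mathcal R}_n^i\}.
\]
Each set on the right is defined by an equality or strict inequality between $\mathcal F_{t_n}$-measurable random variables, so it lies in $\mathcal F_{t_n}$. These finitely many events partition $\Omega$, because a maximum over a finite nonempty set is always attained; here $\mathcal J^{-i}\neq\varnothing$ whenever $J\ge2$. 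It follows that $\iota_n^i=\sum_{j\in\mathcal J^{-i}}j\,1_{\{\iota_n^i=j\}}$ is $\mathcal F_{t_n}$-measurable.

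\emph{Step 3 (technical points).} Two minor points need attention. First, the $\overline Y$ are defined only up to $\P$-null sets; one fixes versions, or notes that the filtration is complete in the Brownian setting, and in the general setting simply works with chosen $\mathcal F_{t_n}$-measurable versions. Second, in the degenerate case $J=1$ the index set $\mathcal J^{-i}$ is empty; then the convention has to be read as $\iota_n^i:=i$ (the $\inf\varnothing$ convention is irrelevant since no switching occurs), which is again constant. The argument contains no real obstacle; the only care needed is these version and degenerate-case conventions.
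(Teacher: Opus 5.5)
Your proof is correct and takes essentially the same route as the paper. Both write $\{\iota_n^i=j\}$ as a finite intersection of comparisons between the $\mathcal F_{t_n}$-measurable quantities $\mathcal R_n^{i,k}$; the paper compares each $\mathcal R_n^{i,k}$ directly with $\mathcal R_n^{i,j}$ (strict inequality for $k<j$, weak for $k>j$), which is equivalent to your formulation via $\overline{\mathcal R}_n^i$, and your remarks on versions and on the degenerate case $J=1$ are extra care the paper omits.
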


\begin{proof}
The claim is immediate for \(n=N\) since \(j(N,i)=i\) by definition. Let \(n<N\). Then \(j(n,i)\in\mathcal{J}^{-i}\). Fix any \(j\in\mathcal{J}^{-i}\). 
By direct verification, the event \(\{j(n,i)=j\}\) can be written as
\[
\begin{aligned}
\{j(n,i)=j\}
= &\ \Big(\bigcap_{j>k\in\mathcal{J}^{-i}}
\big\{\overline{Y}^{k}_{t_n}-l_{ik}(t_n)<\overline{Y}^{j}_{t_n}-l_{ij}(t_n)\big\}\Big) \\
&\ \cap
\Big(\bigcap_{j<k\in\mathcal{J}^{-i}}
\big\{\overline{Y}^{k}_{t_n}-l_{ik}(t_n)\le \overline{Y}^{j}_{t_n}-l_{ij}(t_n)\big\}\Big).
\end{aligned}
\]
Since \(\overline{Y}^{k}_{t_n}-l_{ik}(t_n)\in\mathcal{F}_{t_n}\) for every \(k\in\mathcal{J}^{-i}\), each set in the intersections is \(\mathcal{F}_{t_n}\)-measurable, and hence \(\{j(n,i)=j\}\in\mathcal{F}_{t_n}\). Therefore \(j(n,i)\) is \(\mathcal{F}_{t_n}\)-measurable.
\end{proof}

\section{Supplementary results for affine It\^o diffusion}\label{subsec:supplement-affine-ito-diffusion}

For an affine It\^o diffusion \( X^{d} \) (\Cref{def:AID}) under Assumption~\ref{def:ADI_express}, we can establish the following additional H\"older-type continuity property of \( X^{d} \), which is implied in \cite[Proof of Theorem~3.9]{ye2025deepmartingale}.

\begin{lemma}[Expressivity of H\"older continuity]\label{lem:express_holder_AID}
    Suppose \( X^{d} \) satisfies Assumption~\ref{def:ADI_express}. Then there exist constants \( c,q > 0 \) independent of \( d \), such that
    \[
        \mathbb{E}\big[\| X_{t}^{t_n,x;d} - X_{s}^{t_n,x;d} \|\big] \le c d^q (1+\|x\|)|t-s|^{\frac{1}{2}},
    \]
    for all \( t,s \in [t_n,t_{n+1}] \), \( x \in \mathbb{R}^d \), and \( n \in \N^{-1} \).
\end{lemma}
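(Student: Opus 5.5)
We would prove the estimate directly from the explicit SDE, reducing it to moment bounds on the increment. Write $X_u:=X_u^{t_n,x;d}$ and assume without loss of generality $t_n\le s\le t\le t_{n+1}$. Then
\[
X_t-X_s=\int_s^t (A_\mu^d X_u+b_\mu^d)\,du+\sum_{k=1}^d\int_s^t (A_\sigma^{k;d}X_u+b_\sigma^{k;d})\,dW_u^{k;d}.
\]
By Jensen's inequality $\mathbb E\|X_t-X_s\|\le (\mathbb E\|X_t-X_s\|^2)^{1/2}$, so it suffices to bound the second moment of each term by $c\,d^{2q}(1+\|x\|)^2|t-s|$.

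\emph{Drift term.} By Cauchy--Schwarz in time,
\[
\mathbb E\Big\|\int_s^t (A_\mu^dX_u+b_\mu^d)\,du\Big\|^2\le |t-s|\int_s^t \mathbb E\|A_\mu^dX_u+b_\mu^d\|^2du\le 2|t-s|^2\big(\|A_\mu^d\|_{\H}^2\sup_u\mathbb E\|X_u\|^2+\|b_\mu^d\|^2\big).
\]

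\emph{Diffusion term.} By It\^o's isometry, the second moment equals
\[
\int_s^t\sum_k\mathbb E\|A_\sigma^{k;d}X_u+b_\sigma^{k;d}\|^2du\le 2|t-s|\big(\|A_\sigma^d\|_2^2\sup_u\mathbb E\|X_u\|^2+\|b_\sigma^d\|_{\H}^2\big).
\]

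Since $|t-s|\le T/N$, both contributions are at most $C|t-s|$ times the bracket. By Assumption~\ref{def:ADI_express}, the coefficient norms are bounded by $c_a(\log d)^{1/2}\le c_a d$ or by $c_a d^{q_a}$, both of which are polynomial in $d$.

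The key remaining step, and the main obstacle, is a uniform moment bound of the form $\sup_{u\in[t_n,t_{n+1}]}\mathbb E\|X_u\|^2\le C d^{q_0}(1+\|x\|)^2$ with $C,q_0$ independent of $d$. We would obtain it by applying It\^o's formula to $\|X_u\|^2$ (or using the SDE with $(a+b+c)^2$ inequalities) and then Gronwall's inequality. This gives the bound
\[
\mathbb E\|X_u\|^2\le 3\big(\|x\|^2+T\|b_\mu\|^2+\|b_\sigma\|_{\H}^2\big)T\exp\big(C(\|A_\mu\|_{\H}^2+\|A_\sigma\|_2^2)\big).
\]
The exponential factor is the delicate point. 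Because $\|A_\mu^d\|_{\H}^2\le c_a^2\log d$ and $\|A_\sigma^d\|_2^2\le c_a(\log d)^{1/2}$, it equals $\exp(C\log d)=d^{C}$, which is still polynomial in $d$. This is precisely why the logarithmic scaling in Assumption~\ref{def:ADI_express}(i) is imposed, and we would simply invoke the corresponding estimate \cite[Theorems~A.7]{ye2025deepmartingale}, as in the proof of Lemma~\ref{lem:V_Lip_growth}.

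Finally, we combine the three bounds, take square roots, and note that $\sqrt{(1+\|x\|)^2}=1+\|x\|$. After enlarging $c,q$, the constants are independent of $n$ and $d$, which yields the claim.
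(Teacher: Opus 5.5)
Your proposal is correct and follows essentially the same route as the paper. Both bound the second moment of the increment using an a priori moment estimate for $X^{t_n,x;d}$ (where the logarithmic scaling in Assumption~\ref{def:ADI_express}(i) keeps the Gronwall factor polynomial in $d$) and then apply Jensen; the paper simply cites the growth-moment bound of \cite[Lemma~6]{ye2025deepmartingale} and the a priori estimate from \cite[Proof of Theorem~2]{ye2025deepmartingale} rather than writing them out.

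The only slip is cosmetic. In your displayed Gronwall bound, the factor $T$ multiplying $\|x\|^2$ should not be there, since at $u=t_n$ the bound must dominate $\|x\|^2$. This does not affect the argument, because only the form $Cd^{q_0}(1+\|x\|)^2$ matters.
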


\begin{proof}
By the same argument as in \cite[Lemma~6]{ye2025deepmartingale}, for any \( \tilde{p} \ge 2 \), there exist positive constants \( c_{\tilde{p}}, q_{\tilde{p}} \), independent of \( d \), such that
\[
    \Big(\mathbb{E}\big[ |\Growth(X_t^{s,\cdot;d})|^{\tilde{p}} \big] \Big)^{\frac{1}{\tilde{p}}} \le c_{\tilde{p}} d^{q_{\tilde{p}}},
\]
for any \( s,t \in [0,T] \).
Moreover, using the same techniques as in \cite[Proof of Theorem~2]{ye2025deepmartingale} (in particular, the a priori estimate for \( X^{t_n,x;d} \)), there exist positive constants \( c,q \), independent of \( d \), such that
\[
    \Big(\mathbb{E}\big[\|X_{t}^{t_n,x;d} - X_{s}^{t_n,x;d}\|^2 \big] \Big)^{\frac{1}{2}} \le c d^q (1+\|x\|)|t-s|^{\frac{1}{2}},
\]
for any \( t,s \in [t_n,t_{n+1}] \), \( x \in \mathbb{R}^d \), and \( n \in \N^{-1} \). This yields the desired result.
\end{proof}

Using Lemma~\ref{lem:express_holder_AID} and following the same proof strategy as in \cite[Lemma~6]{ye2025deepmartingale}, we can verify that, under Assumption~\ref{def:ADI_express}, the dynamics structural conditions required by our expressivity framework for DeepMartingales (see Assumption~\ref{ass:N_0_structural_ass_model_deter} and Assumption~\ref{ass:dynamic_ass} in \Cref{subsec:converge_express_ReLU}) are satisfied. We therefore omit the proof.

\begin{lemma}\label{lemma:AID-log_ass1}
    If the affine It\^o diffusion \( X^{d} \) satisfies Assumption~\ref{def:ADI_express}, then \( X^{d} \) satisfies Assumption~\ref{ass:N_0_structural_ass_model_deter} and Assumption~\ref{ass:dynamic_ass} for any \( p > 4 \).
\end{lemma}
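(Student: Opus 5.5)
We must verify that an affine It\^o diffusion satisfying Assumption~\ref{def:ADI_express} meets Assumption~\ref{ass:N_0_structural_ass_model_deter}(i)--(ii) for the coefficients $\mu^d,\sigma^d$, and Assumption~\ref{ass:dynamic_ass}(a)--(c) for every $p>4$. Throughout, constants may be enlarged so that one pair $(c,q)$ works in all estimates. This is allowed because $\log d\le d$ for $d\ge 3$.

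\emph{Step 1: structural Assumption~\ref{ass:N_0_structural_ass_model_deter}.} Here $\mu^d(t,x)=A_\mu^d x+b_\mu^d$ and $\sigma^d(t,x)=(A_\sigma^{k;d}x+b_\sigma^{k;d})_{k}$.
\begin{itemize}
\item \emph{Lipschitz bounds.} We have $\Lip\mu^d(t,\cdot)\le\|A_\mu^d\|_2\le\|A_\mu^d\|_{\H}\le c_a(\log d)^{1/2}$. Also
\[
\LipH\sigma^d=\Big(\sum_k\|A_\sigma^{k;d}\|_2^2\Big)^{1/2}\le c_a^{1/2}(\log d)^{1/4}.
\]
This gives (i).
\item \emph{Time regularity.} The coefficients do not depend on time, so $\Holdertwo F_1^d=0$.
\item \emph{Values at the origin.} We have $\|\mu^d(t,\mathbf 0)\|=\|b_\mu^d\|$ and $\|\sigma^d(t,\mathbf 0)\|_{\H}=\|b_\sigma^d\|_{\H}$, both bounded by $c_ad^{q_a}$. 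This gives the coefficient part of (ii).
\end{itemize}
The reward-side conditions ($F_2^d$ and $G^d$) are not part of this lemma. They come from Assumption~\ref{ass:other_new} via Remark~\ref{rem:reward_ass_express_to_ass_integration}.

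\emph{Step 2: Assumption~\ref{ass:dynamic_ass}(a).}
\begin{itemize}
\item \emph{Growth of the flow.} Following \cite[Lemma~6]{ye2025deepmartingale}, affinity gives the representation $P_s^{t;d}(x,\omega)=\mathcal Y_{s,t}(\omega)x+\mathcal Z_{s,t}(\omega)$. Here $\mathcal Y_{s,t}$ is the fundamental matrix solution of the linear SDE driven by $A_\mu^d,A_\sigma^{k;d}$, and $\mathcal Z_{s,t}$ is the solution started from $0$. Hence $\Growth(P_s^{t;d})\le\|\mathcal Y_{s,t}\|_2+\|\mathcal Z_{s,t}\|$.
\item \emph{Moments of $\mathcal Y$.} Apply It\^o's formula to $\|\mathcal Y_{s,t}v\|^{\tilde p}$ and then Gronwall. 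The exponent involves $\tilde p\|A_\mu^d\|_{\H}+\tilde p^2\|A_\sigma^d\|_2^2\lesssim \tilde p^2c_a(\log d)^{1/2}$. This yields a factor $\exp(C(\log d)^{1/2})$ per unit vector, and summing over a basis costs an extra polynomial factor in $d$. Since $\exp(C(\log d)^{1/2})\le C'd$, we obtain $\|\mathcal Y_{s,t}\|_{L^{\tilde p}}\le cd^q$.
\item \emph{Moments of $\mathcal Z$.} The same BDG--Gronwall argument bounds $\|\mathcal Z_{s,t}\|_{L^{\tilde p}}$, with the extra polynomial factors $\|b_\mu^d\|$ and $\|b_\sigma^d\|_{\H}$.
\item \emph{Time-increment bound.} This is Lemma~\ref{lem:express_holder_AID}.
\end{itemize}
This is the main obstacle. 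The key point is that the $\log$-scaling in Assumption~\ref{def:ADI_express}(i) is exactly what turns the Gronwall exponential into a polynomial in $d$. I would first attempt the moment bound directly in $L^{\tilde p}$ with $\tilde p=p$, and only afterwards pass to any $p>4$.

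\emph{Step 3: Assumption~\ref{ass:dynamic_ass}(b)--(c).} Given $\omega$, the map $x\mapsto\mathcal Y_{s,t}(\omega)x+\mathcal Z_{s,t}(\omega)$ is affine. It is therefore realized by a one-layer RanNN with random weights $\mathcal Y_{s,t}$ and bias $\mathcal Z_{s,t}$, in the sense of \cite[Definition~4.13]{ye2025deepmartingale}, and its depth is $1\le cd^q$. Its size is at most $d^2+d$ nonzero entries deterministically, so $\E[\size]\le 2d^2$. Measurability in $(x,\omega)$ is inherited from $P^d$. Combining Steps 1--3 and enlarging $c,q$ gives the claim for every $p>4$.
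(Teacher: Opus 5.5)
Your proposal is correct and follows essentially the paper's route: the coefficient conditions of Assumption~\ref{ass:N_0_structural_ass_model_deter} are checked directly, Assumption~\ref{ass:dynamic_ass} comes from the affine-flow argument of \cite[Lemma~6]{ye2025deepmartingale} (flow $x\mapsto\mathcal Y_{s,t}x+\mathcal Z_{s,t}$, polynomial moment bounds on its growth, trivial RanNN realization), and the time-increment bound is Lemma~\ref{lem:express_holder_AID}. One caution: for $\mathcal Z_{s,t}$, use the same It\^o-formula/Young/Gronwall computation you use for $\mathcal Y_{s,t}$, not a BDG--H\"older estimate, because the latter puts $\|A_\mu^d\|^{\tilde p}+\|A_\sigma^d\|_2^{\tilde p}$ in the exponent and gives $\exp(C(\log d)^{\tilde p/2})$, which is not polynomial in $d$ for $\tilde p>2$.
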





\bibliographystyle{siamplain}
\bibliography{references}

@book{SDE03,
author = {{\O}ksendal, Bernt},
title="Stochastic Differential Equations",
year="2003",
publisher="Springer Berlin",
address="Heidelberg"
}

@article{Gonon-Schwab2021-express,
  title={Deep ReLU network expression rates for option prices in high-dimensional, exponential Lévy models},
  author={Gonon, Lukas and Schwab, Christoph},
  journal={Finance Stoch.},
  volume={25},
  number={3},
  pages={615--657},
  year={2021},
  publisher={Springer},
}

@article{Carmona-Ludkoviski01122008,
author = {René Carmona and Michael Ludkovski},
title = {Pricing Asset Scheduling Flexibility using Optimal Switching},
journal = {Appl. Math. Finance},
volume = {15},
number = {5-6},
pages = {405--447},
year = {2008},
publisher = {Routledge},
}

@article{Bernan-Schwartz-85,
 author = {Michael J. Brennan and Eduardo S. Schwartz},
 journal = {J. Bus.},
 number = {2},
 pages = {135--157},
 title = {Evaluating Natural Resource Investments},
 urldate = {2026-01-06},
 volume = {58},
 year = {1985}
}

@article{Dixit-89,
 author = {Avinash Dixit},
 journal = {J. Political Econ.},
 number = {3},
 pages = {620--638},
 title = {Entry and Exit Decisions under Uncertainty},
 urldate = {2026-01-06},
 volume = {97},
 year = {1989}
}

@article{Carmona-Ludkovski-10,
author = {René Carmona and Michael Ludkovski},
title = {Valuation of energy storage: an optimal switching approach},
journal = {Quant. Finance},
volume = {10},
number = {4},
pages = {359--374},
year = {2010},
}

@article{opschoor20,
author = {Opschoor, Joost A. A. and Petersen, Philipp C. and Schwab, Christoph},
title = {Deep {R}e{LU} networks and high-order finite element methods},
journal = {Anal. Appl.},
volume = {18},
number = {05},
pages = {715-770},
year = {2020},
}

@article{Oksendal-Brekke-94,
author = {Brekke, Kjell Arne and Øksendal, Bernt},
title = {Optimal Switching in an Economic Activity under Uncertainty},
journal = {SIAM J. Control Optim.},
volume = {32},
number = {4},
pages = {1021-1036},
year = {1994},
}

@inproceedings{Duckworth-zervos-00,
  author    = {Duckworth, K. and Zervos, M.},
  title     = {A problem of stochastic impulse control with discretionary stopping},
  booktitle = {Proceedings of the 39th IEEE Conference on Decision and Control},
  year      = {2000},
  pages     = {222--227}
}

@article{zervos-98,
author = {Knudsen, Thomas S. and Meister, Bernhard and Zervos, Mihail},
title = {Valuation of Investments in Real Assets with Implications for the Stock Prices},
journal = {SIAM J. Control Optim.},
volume = {36},
number = {6},
pages = {2082-2102},
year = {1998},
}

@article{Pham-switch-07,
author = { Ly Vath, Vathana and  Pham, Huy\^{e}n},
title = {Explicit Solution to an Optimal Switching Problem in the Two‐Regime Case},
journal = {SIAM J. Control Optim.},
volume = {46},
number = {2},
pages = {395-426},
year = {2007},
}

@article{Hamadene-Djehiche-09,
author = {Djehiche, Boualem and Hamad\`{e}ne, Said and Popier, Alexandre},
title = {A Finite Horizon Optimal Multiple Switching Problem},
journal = {SIAM J. Control Optim.},
volume = {48},
number = {4},
pages = {2751-2770},
year = {2009},
}

@article{Hamadene-07,
author = {Hamad\`{e}ne, Said and Jeanblanc, Monique},
title = {On the Starting and Stopping Problem: Application in Reversible Investments},
journal = {Math. Oper. Res.},
volume = {32},
number = {1},
pages = {182-192},
year = {2007},
}

@phdthesis{ludkovski2005-thesis,
  author    = {Ludkovski, Michael},
  title     = {Optimal Switching with Application to Energy Tolling Agreements},
  school    = {Princeton University},
  year      = {2005},
  type      = {{PhD} Thesis}
}

@article{RAISSI2019-pinns,
title = {Physics-informed neural networks: A deep learning framework for solving forward and inverse problems involving nonlinear partial differential equations},
journal = {J. Comput. Phys.},
volume = {378},
pages = {686-707},
year = {2019},
author = {M. Raissi and P. Perdikaris and G.E. Karniadakis},
}

@article{Pham-sifin-14,
author = {A\"{\i}d, Ren\'{e} and Campi, Luciano and Langren\'{e}, Nicolas and Pham, Huy\^{e}n},
title = {A Probabilistic Numerical Method for Optimal Multiple Switching Problems in High Dimension},
journal={SIAM J. Financ. Math.},
volume = {5},
number = {1},
pages = {191-231},
year = {2014},
}

@article{han-weinanE18,
author = { Jiequn Han  and Arnulf Jentzen  and Weinan E },
title = {Solving high-dimensional partial differential equations using deep learning},
journal = {Proc. Natl. Acad. Sci. U. S. A.},
volume = {115},
number = {34},
pages = {8505-8510},
year = {2018},
}

@article{Becker-Jentzen-Neufeld-Deep-Splitting-21,
	author = {Beck, Christian and Becker, Sebastian and Cheridito, Patrick and Jentzen, Arnulf and Neufeld, Ariel},
	title = {Deep Splitting Method for Parabolic {PDEs}},
	journal = {SIAM J. Sci. Comput.},
	volume = {43},
	number = {5},
	pages = {A3135-A3154},
	year = {2021}
}

@article{pham2020deepBSDE,
author = {Huré, Côme and Pham, Huyên and Warin, Xavier},
year = {2020},
month = {01},
pages = {1},
title = {Deep backward schemes for high-dimensional nonlinear PDEs},
volume = {89},
journal = {Math. Comput.},
}

@article{pham2022deepBSDE_erroranalysis,
author = {Germain, Maximilien and Pham, Huy\^{e}n and Warin, Xavier},
title = {Approximation Error Analysis of Some Deep Backward Schemes for Nonlinear PDEs},
journal = {SIAM J. Sci. Comput.},
volume = {44},
number = {1},
pages = {A28-A56},
year = {2022},
}

@article{Jentzen23,
author = {Grohs, P. and Hornung, F. and Jentzen, A. and et al.},
title = {A Proof that Artificial Neural Networks Overcome the Curse of Dimensionality in the Numerical Approximation of Black–Scholes Partial Differential Equations},
journal = {Mem. Am. Math. Soc.},
volume = {284},
number = {1410},
year = {2023},
}

@article{HAN2023106881,
title = {A new deep neural network algorithm for multiple stopping with applications in options pricing},
journal = {Commun. Nonlinear Sci. Numer. Simul.},
volume = {117},
pages = {106881},
year = {2023},
author = {Yuecai Han and Nan Li},
}

@article{Jia-Wong01022024,
author = {Bowen Jia and Hoi Ying Wong},
title = {Deep impulse control: application to interest rate intervention},
journal = {Quant. Finance},
volume = {24},
number = {2},
pages = {221--232},
year = {2024},
}

@article{gonon23,
  author  = {Lukas Gonon},
  title   = {Deep neural network expressivity for optimal stopping problems},
  journal = {Finance Stoch.},
  year    = {2024},
  volume ={28},
  pages   = {865–910},
}

@article{martyr16-discrete-switching,
 author = {R. MARTYR},
 journal = {Adv. Appl. Probab.},
 number = {3},
 pages = {832--847},
 title = {DYNAMIC PROGRAMMING FOR DISCRETE-TIME FINITE-HORIZON OPTIMAL SWITCHING PROBLEMS WITH NEGATIVE SWITCHING COSTS},
 volume = {48},
 year = {2016}
}

@misc{lin2009dual,
  author       = {Lin, B. and Ludkovski, M.},
  title        = {Dual Simulation Methods in Optimal Switching Problems},
  year         = {2009},
  month        = jul,
  howpublished = {Technical Report}
}

@misc{ye2025deepmartingale,
      title={DeepMartingale: Duality of the Optimal Stopping Problem with Expressivity and High-Dimensional Hedging}, 
      author={Junyan Ye and Hoi Ying Wong},
      year={2025},
      eprint={2510.13868},
      archivePrefix={arXiv},
      primaryClass={math.OC},
}

@article{Bayraktar23-deep-switching,
author = {Bayraktar, Erhan and Cohen, Asaf and Nellis, April},
title = {A Neural Network Approach to High-Dimensional Optimal Switching Problems with Jumps in Energy Markets},
journal={SIAM J. Financ. Math.},
volume = {14},
number = {4},
pages = {1028-1061},
year = {2023},
}

@article{puredual-mf,
author = {Alfonsi, Aurélien and Kebaier, Ahmed and Lelong, Jérôme},
title = {A Pure Dual Approach for Hedging Bermudan Options},
journal = {Math. Finance},
volume = {35},
number = {4},
pages = {745-759},
year = {2025}
}

@article{Roger02,
author = {Rogers, L.},
year = {2002},
pages = {271-286},
title = {Monte Carlo Valuing of American Options},
volume = {12},
journal = {Math. Finance},
type = {J}
}

@article{Haugh04,
author = {Kogan, L. and Haugh ,M.},
year = {2004},
pages = {258-270},
title = {Pricing American Options: A Duality Approach},
volume = {52},
number = {2},
journal = {Oper. Res.},
type={J}
}

@article{belome09,
author = {Belomestny, D. and Bender, C. and Schoenmakers, J.},
year = {2009},
pages = {53-71},
title = {True Upper Bounds For Bermudan Products Via Non-nested Monte Carlo},
volume = {19},
journal = {Math. Finance},
type = {J}
}

@article{roger10,
author = {Rogers, L.},
title = {Dual Valuation and Hedging of Bermudan Options},
journal={SIAM J. Financ. Math.},
volume = {1},
number = {1},
pages = {604-608},
year = {2010},
type={J}
}

@article{schoen13,
author = {Schoenmakers, J. and Zhang, J. and Huang, J.},
title = {Optimal Dual Martingales, Their Analysis, and Application to New Algorithms for Bermudan Products},
journal={SIAM J. Financ. Math.},
volume = {4},
number = {1},
pages = {86-116},
year = {2013},
type={J}
}

@article{Jentzen20,
author = {Hutzenthaler, M. and Jentzen, A. and Kruse, T. and et al.},
title = { A proof that rectified deep neural networks overcome the curse of dimensionality in the numerical approximation of semilinear heat equations},
journal = { SN Partial Differ. Equ. Appl.},
volume = {1},
number = {10},
year = {2020},
type = {J}
}

@article{Becker19,
  author  = {Becker, S. and Cheridito, P. and Jentzen, A.},
  title   = {Deep Optimal Stopping},
 journal={J. Mach. Learn. Res.},
  year    = {2019},
  volume  = {20},
  number  = {74},
  pages   = {1--25},
  type={J}
}

@article{Martyr-16-signed-switching,
author = {Martyr, Randall},
title = {Finite-Horizon Optimal Multiple Switching with Signed Switching Costs},
journal = {Math. Oper. Res.},
volume = {41},
number = {4},
pages = {1432-1447},
year = {2016},
}
\end{document}